\newtheorem{theorem}{Theorem}
\theoremstyle{plain}
\newtheorem{corollary}{Corollary}
\newtheorem{definition}{Definition}
\newtheorem{example}{Example}
\newtheorem{lemma}{Lemma}
\newtheorem{proposition}{Proposition}
\newtheorem{remark}{Remark}
\numberwithin{equation}{section}
\begin{document}
\begin{center}
\vspace*{1.3cm}

\textbf{A UNIFIED APPROACH TO EXTENDED REAL-VALUED FUNCTIONS}

\bigskip

by

\bigskip

PETRA WEIDNER\footnote{HAWK Hochschule Hildesheim/Holz\-minden/G\"ottingen, University of Applied Sciences and Arts, Faculty of Natural Sciences and Technology,
D-37085 G\"ottingen, Germany, {petra.weidner@hawk.de}.}

\bigskip
\bigskip
Research Report\\
Version 6 from June 8, 2018\\
Extended Version of Version 1 from November 16, 2015
\end{center}

\bigskip
\bigskip

\noindent{\small {\textbf{Abstract.}}
Extended real-valued functions are often used in optimization theory, but in different ways for infimum problems and for supremum problems.
We present an approach to extended real-valued functions that works for all types of problems and into which results of convex analysis can be embedded. 
Our approach preserves continuity and the Chebyshev norm when extending a functional to the entire space. The basic idea also works for other spaces than $\overline{\mathbb{R}}$.
Moreover, we illustrate that extended real-valued functions have to be handled in another way than real-valued functions and characterize semicontinuity, convexity, linearity and related properties of such functions.
}

\bigskip

\noindent{\small {\textbf{Keywords:}} 
Extended real-valued function; Semicontinuity; Convex function; Linear function; Sublevel set; Epigraph; Indicator function
}

\bigskip

\noindent{\small {\textbf{Mathematics Subject Classification (2010): }
90C25, 90C30, 90C48, 54C30, 58C05}}

\section{Introduction}

In this report, we present basic notations and properties for functions which attain values in  $\overline{\mathbb{R}} := \mathbb{R}\cup\{-\infty ,+\infty\}$, the {\bf extended set of real numbers}. Of course, such a function may also be real-valued, but infima, suprema, limits, improper integrals or some measure can result in function values $-\infty$ or $+\infty$. Such values can also result from the extension of a function to the entire space or from the addition of an indicator function expressing certain restrictions in convex analysis.

Our approach differs from the usual one in convex analysis presented in \cite{Mor67} and \cite{Roc97}, but results from convex analysis can be embedded into our approach as we will show in Section \ref{s-trans-cx-ua}.  
The main difference is the replacement of $+\infty$ if it is used as a symbol for infeasibility by a symbol which does not belong to $\overline{\mathbb{R}}$. This results in a unified calculus for extended real-valued functions that does not depend on the convexity of the function or on whether the function is to be minimized or maximized. The necessity for an alternative approach to extended real-valued functions came up when studying properties of the function $\varphi_{A,k}$ given by 
\begin{equation}\label{fctal_contr}
\varphi_{A,k} (x):= \inf \{t\in
{\mathbb{R}} \mid x\in tk + A\}
\end{equation}
for some set $A\subset X$ and some $k\in X\setminus\{0\}$ in a linear space X
 under assumptions which do not guarantee that the functional is defined and real-valued on the entire space. Depending on the set $A$ and the vector $k$, $\varphi_{A,k}$ may be convex, concave or neither. Moreover, $\varphi_{A,k}$ may be of interest in minimization and in maximization problems. If the function is convex (e.g. for $X=\mathbb{R}^2$, $A=\{(x_1,x_2)^T\mid x_1 \le 0 ,\; x_2 \le 0\}$, $k=(1,0)^T$), it is of importance for vector minimization problems. If it is concave (e.g. for $X=\mathbb{R}^2$, $A=\{(x_1,x_2)^T\mid 0 \leq x_2 \leq e^{x_1}\}$, $k=(-1,0)^T$), it can be used in economics as a utility function which has to be maximized. 
 
Let us underline that it was not our aim to change the approach in convex analysis, but that we wanted to find a way for extending functions to the entire space in order to use the tools of convex analysis without fixing in advance whether the function would be minimized or maximized. One can easily transfer results from convex analysis, especially from variational analysis, to functions which are given according to our approach. 

Beside the unified calculus, our extension of functions preserves continuity, semicontinuity, suprema and infima of functionals. We get rid of the irritating inequality $\operatorname*{sup}\emptyset < \operatorname*{inf}\emptyset$ and can, e.g., apply the Chebyshev norm approximation immediately to extended real-valued functions which have been extended to the entire space since, in our calculus, the expression
$$\operatorname*{inf}_{y\in Y}\operatorname*{sup}_{x\in X} |f(x)-g(x,y)|$$
makes sense for arbitrary extended real-valued functions $f$, $g$ defined on spaces $X$ and $X\times Y$.

In the following two sections, we will give a short overview about the extended set of real numbers and about the way extended real-valued functionals are handled in convex analysis. Here we will also discuss problems which arise by admitting values $\pm \infty$ and point out more in detail why we  work with a unified approach to extended real-valued functions instead of the standard approach of convex analysis. The unified approach will be described in Section \ref{sec-def-ext}.
Section \ref{s-cont} is devoted to continuity and semicontinuity, whereas Section \ref{s-cx} deals with convexity of extended real-valued functions. In both sections, we also prove basic facts about semicontinuity and convexity which have to be adapted to our framework. Well known results using the epigraph or sublevel sets will appear in a new light. In our proofs, we have to take into consi\-deration that $+\infty$ can belong to the effective domain of the function, that we admit a symbolic function value $\nu\notin\overline{\mathbb{R}}$ and that results of other authors are sometimes based on wrong rules for the calculation in $\overline{\mathbb{R}}$. Since in our theory convexity and concavity have not to be treated separately, affinity and linearity of functions which are not finite-valued can be investigated. This will be done in 
Section \ref{sec-lin-ext}.
Section \ref{s-trans-cx-ua} shows in which way results from convex analysis can be transferred to the unified approach and vice versa and that there exists a one-to-one correspondence between extended real-valued functions in convex analysis and a subset of the extended real-valued functions in the unified approach.
In the last section, we will point out that the unified approach can easily be extended to other spaces than $\overline{\mathbb{R}}$.

From now on, $\mathbb{R}$, $\mathbb{Z}$, $\mathbb{N}$ and $\mathbb{Q}$ will denote the sets of real numbers, of integers, of nonnegative integers and of rational numbers, respectively.
We define $\mathbb{R}_{+}:=\{x\in\mathbb{R}\mid x\ge 0\}$. Linear spaces will always be assumed to be real vector spaces. 
A set $C$ in a linear space is a cone if $\lambda c\in C$ for all $\lambda\in\mathbb{R}_{+}, c\in C$. For a subset $A$ of some linear space, 
$\operatorname*{icr}(A)$ will denote the relative algebraic interior of $A$. 
In a topological space $X$, $\mathcal{N}(x)$ is the set of all neighborhoods of $x\in X$ and $\operatorname*{cl}A$, $\operatorname*{int}A$ and $\operatorname*{bd}A$ denote the closure, the interior and the boundary, respectively, of a subset $A$.
\smallskip

\section{The extended set of real numbers}\label{s-ext-reals}
The extended set of real numbers, $\overline{\mathbb{R}} = \mathbb{R}\cup\{-\infty ,+\infty\}$, is especially used in measure theory and in convex analysis. In measure theory, a set may have the measure $+\infty$. In convex analysis, the following properties of $\overline{\mathbb{R}}$ are of interest:
\begin{itemize}
\item $(\overline{\mathbb{R}},\le )$ is a totally ordered set and each subset of $\overline{\mathbb{R}}$ has an infimum and a supremum,
\item $\overline{\mathbb{R}}$ is a compact Hausdorff space.
\end{itemize}

Let us first recall that in $\overline{\mathbb{R}}$ the following rules are defined for the calculation with $\pm \infty$.
\[ \begin{array}{l} 
\forall y \in \mathbb{R}: \quad -\infty < y < + \infty ,  \\
-(+\infty)=-\infty,\quad  -(-\infty)=+\infty, \\
(+\infty)\cdot (+\infty)=(-\infty)\cdot (-\infty)=+\infty,   \\
(+\infty)\cdot (-\infty)=(-\infty)\cdot (+\infty)=-\infty,  
\end{array} \]
\begin{eqnarray*} 
\forall y\in\mathbb{R} \mbox{ with } y > 0: \quad & y \cdot
(+\infty)  = (+\infty) \cdot y = + \infty, \\
\quad &  y \cdot
( -\infty)  = (-\infty) \cdot y =  - \infty , \\
\forall y\in\mathbb{R} \mbox{ with } y < 0: \quad & y \cdot
(+\infty)  = (+\infty) \cdot y = - \infty ,\\
\quad &  y \cdot
( -\infty)  = (-\infty) \cdot y =  + \infty , \\
\forall y \in {\mathbb{R}} \cup \{+\infty \}: \quad & y + (+\infty )  =  + \infty  + y = + \infty, \\
\forall y \in {\mathbb{R}} \cup \{-\infty \}: \quad & y + (-\infty )  =  - \infty  + y = - \infty. 
\end{eqnarray*}
Moreover, we define
\[0 \cdot ( + \infty ) = ( + \infty ) \cdot 0 = 0 \cdot (- \infty ) = ( - \infty ) \cdot 0 =0.  \]

$\overline{\mathbb{R}}$ is not a real linear space since $+\infty$ and $-\infty$ do not have an inverse element w.r.t. (with regard to) addition and since
the calculation within $\overline{\mathbb{R}}$ is not possible in an analogous way as in $\mathbb{R}$.
\begin{example}\label{ex-extended-real}
The rule $(\lambda + \mu)x=\lambda x+\mu x$ does not hold for arbitrary $\lambda ,\mu\in\mathbb{R}$ and $x\in\{+\infty ,-\infty\}$ since otherwise, e.g.,
$+\infty=(3-2)\cdot (+\infty )=3\cdot(+\infty ) + (-2)\cdot(+\infty )=+\infty +(-\infty )=2\cdot(+\infty ) + (-3)\cdot(+\infty )=(2-3)\cdot (+\infty )=-\infty $.
\end{example}

Though $\overline{\mathbb{R}}$ is totally ordered, equivalent transformations of equations and inequalities in $\mathbb{R}$ do not work in the same way in $\overline{\mathbb{R}}$.
\begin{example}\label{ex-extended-inequ}
In $\overline{\mathbb{R}}$, $a+b=a+c$ does not imply $b=c$, and $a+b\leq a+c$ does not imply $b\leq c$. Choose, e.g.,  $a=-\infty$, $b=0$ and $c=-3$.
\end{example}

But we can equip $\overline{\mathbb{R}}$ with the topology that is generated by a neighborhood base of the Euclidean topology on $\mathbb{R}$ together with the neighborhood bases
$\mathcal{B}(+\infty )=\{\{x\in\overline{\mathbb{R}}\mid x>a\} \mid a\in\mathbb{R}\}$ and $\mathcal{B}(-\infty )=\{\{x\in\overline{\mathbb{R}}\mid x<a\} \mid a\in\mathbb{R}\}$.
Whenever we will consider $\overline{\mathbb{R}}$ as a topological space, this topology will be supposed. Let us note that $\overline{\mathbb{R}}$ is a compact Hausdorff topological space, but not a topological vector space. Nevertheless, the following rules are valid in its topology $\tau$: 
\begin{itemize}
\item[(a)] $\forall O\in\tau\setminus\{\emptyset\} \;\forall \lambda\in\mathbb{R}\setminus \{0\}:\; \lambda O\in\tau$.
\item[(b)] $\forall O\in\tau\setminus\{\emptyset\} \;\forall x\in \mathbb{R}:\; x+O\in\tau$.
\end{itemize}  
\smallskip

\section{Extended real-valued functions in convex analysis}\label{s-fctal-cx-anal}

The approach in this section is part of the basics of convex analysis as they were presented by Moreau \cite{Mor67} and Rockafellar \cite{Roc97} and has become standard when studying minimization problems (see, e.g., \cite{Att84}, \cite{HirLem93}, \cite{aub93}, \cite{Zal:02}, \cite{BorVan10}), which includes the variational analysis for such problems (\cite{RocWet97}, \cite{Mor06a}, \cite{Mor06}). Originally the concept focused on the minimization of real-valued convex functions which are extended to the complete space by the value $+\infty$ and/or for which the barrier $+\infty$ indicates the violation of restrictions. When minimizing a function in the case that feasible solutions exist, the function value $+\infty$ does not alter the minimum. 

The consideration of extended real-valued functions in convex analysis is unilateral. The handling of the function depends on whether it is to be minimized or to be maximized. The usual definitions are given for minimization problems and can be applied to maximization by working with $-\infty$ instead of $+\infty$ where the rule for the addition of $+\infty$ and $-\infty$ has to be changed. The definition of an indicator function also refers to the minimization of the function to which it is added. 

We will now explain the usual approach for a function $f$ that has to be minimized. Even if $f$ would be real-valued it could be extended to a function that is not real-valued any more by the following two reasons.

$f: C\to \overline{\mathbb{R}}$ with $C$ being a subset of some space $X$ can be extended to a function
$g: X\to \overline{\mathbb{R}}$ by defining
\begin{displaymath}
g(x):= \left \{
\begin{array}{ll}
f(x) & \mbox{ if} \quad x \in C, \\
+\infty & \mbox{ if} \quad x \in X \setminus C.
\end{array} \right.
\end{displaymath} 
Functions with values in $\overline{\mathbb{R}}$ also play an important role in optimization theory if the problem
\[
\inf \{f(x) \mid x\in C\}
\]
with $f: X \rightarrow \overline{\mathbb{R}}$ is replaced by
\[
\inf \{g(x) \mid x\in X\}
\]
with $g:=f+\Psi_C$, where $\Psi_C: X \rightarrow \overline{\mathbb{R}}$ is the indicator function of $C$ defined by
\begin{displaymath}
\Psi_C (x):= \left \{
\begin{array}{ll}
0 & \mbox{ if} \quad x \in C, \\
+\infty & \mbox{ if} \quad x \in X \setminus C.
\end{array} \right.
\end{displaymath} 
Here, the rule $+\infty + (-\infty) = +\infty$ has to be applied.

Working with $g$ instead of $f$ offers an elegant way to avoid the necessity for investigating technical details of $C$ in advance.
E.g. when studying the functional $\varphi_{A,k}$ given by equation (\ref{fctal_contr}) one can simply write $\varphi_{A,k} : X \rightarrow \overline{\mathbb{R}}$ using the definition $\inf \emptyset = + \infty$.

Obviously, the value $+\infty$ serves as a symbol for a function value at points which are not feasible for the problem, and if a function value $+\infty$ exists since, e.g., the function is defined via a supremum, the related argument of the function is handled as if it would not belong to the domain of interest. 

Consequently, the (effective) domain of a function $g: X\to \overline{\mathbb{R}}$ is defined as 
$\{x\in X\mid g(x)\in\mathbb{R}\cup \{ -\infty\}\}$. 

Based on this classical approach, useful tools for convex optimization have been developed, but we have to point out that extended real-valued functions have to be handled carefully. 
One has to take into consideration that the familiar laws of arithmetic are not valid in full extent on $\overline{\mathbb{R}}$ (cp. Example \ref{ex-extended-real}) and that equations as well as inequalities cannot be handled in the same way as for real numbers (cp. Example \ref{ex-extended-inequ}). 
Let us e.g. recall that a function $f: C \rightarrow \mathbb{R}$, $C$ being a convex subset of a linear space $X$, is said to be convex on $C$
if
\begin{equation*}         
f(\lambda x^1+ (1-\lambda) x^2) \leq \lambda f(x^1) + (1-\lambda ) f(x^2)
\end{equation*}
holds for all $x^1, x^2\in C$ and $\lambda\in (0,1)$.
Convexity of functions with values in $\overline{\mathbb{R}}$ has to be defined in an alternative way, e.g. via convexity of the epigraph of $f$ (cp. e.g. \cite{Roc97}).

We will now point out drawbacks of the approach in convex analysis which will not appear in the unified approach we are going to present in the next section.
\begin{itemize}
\item[(a)]
The way an extended real-valued function in convex analysis is handled depends on whether it has to be minimized or to be maximized.
Supremum problems are studied in an analogous way as the infimum problems above, but with $-\infty$ replacing $+\infty$ and vice versa. One consequence is the different definition of $+\infty + (-\infty )$ in both frameworks.
The so-called inf-addition
\[ +\infty + (-\infty ) = (-\infty ) + (+\infty ) = +\infty \]
is applied for minimization problems, whereas the sup-addition for maximization problems is given by 
\[ +\infty + (-\infty ) = (-\infty ) + (+\infty ) = -\infty .\]
Moreau \cite{Mor67} introduced both kinds of addition and, in connection with this, two different addition operators on $\overline{\mathbb{R}}$. 
Note the following consequence of the inf-addition for $a,b,c\in\overline{\mathbb{R}}$:
\begin{equation*}
a\leq b+c\mbox{ implies } -a\geq -(b+c) \mbox{, but not } -a\geq -b-c
\end{equation*}
since, e.g., $3\leq (+\infty )+(-\infty )$, but $-3\not\geq (-\infty )+(+\infty )$.
Nevertheless,
\begin{equation*}
a\leq b+c\implies -a\geq -b-c
\end{equation*}
if $\{-\infty ,+\infty \}\not= \{ b,c \}$.
\item[(b)]
The extension of a convex function $f: C \rightarrow \overline{\mathbb{R}}$, $C$ being a proper convex subset of some topological vector space $X$, to $g: X\to \overline{\mathbb{R}}$ by adding the function value $+\infty$ outside $C$
can destroy the continuity and even the lower semicontinuity of $f$.
\begin{example}
Consider the functional $\varphi : \mathbb{R}\to \overline{\mathbb{R}}$ given by
\[ \varphi(x)=\left\{
\begin{array}{r@{\quad\mbox{ if }\quad}l}
x & x>0,\\
+\infty & x\leq 0.
\end{array}
\right.
\]
$\varphi$ is continuous on $\{x\in \mathbb{R}\mid \varphi(x)\in\mathbb{R}\cup \{-\infty\}\}$, but not lower semiconti\-nuous on $\mathbb{R}$.
\end{example}
\item[(c)] Multiplication of functions with real numbers and subtraction of functions are only possible under certain assumptions \cite[p.389]{HirLem93}.
\item[(d)] In connection with extended real-valued functions in convex analysis, the definitions $\operatorname*{inf}\emptyset =+\infty$ and $\operatorname*{sup}\emptyset =-\infty$ are given. Hence $\operatorname*{sup}\emptyset < \operatorname*{inf}\emptyset$.
\item[(e)]
If a function is extended to the entire space by $+\infty$ or $-\infty$ it cannot be handled like the original function any more. This is obviously the case since the way of extension depends on the purpose. The extended function does also not have a finite Chebyshev norm and the Chebyshev norm approximation
$$\operatorname*{inf}_{y\in Y}\operatorname*{sup}_{x\in X} |f(x)-g(x,y)|$$
cannot be applied.
\item[(f)]
In some optimization problems a function cannot simply be extended to the entire space using only one of the values $+\infty$ or $-\infty$. Consider the problem
\begin{equation}\label{eq-infsupL}
\operatorname*{inf}_{x\in X_0}\operatorname*{sup}_{y\in Y_0} L(x,y),
\end{equation}
where $Y_0$ is a proper subset of some space $Y$, $X_0$ is a proper subset of some space $X$ and $L$ is real-valued. In such a case $L$ is extended to a function $\ell$ which is defined on $X\times Y$
and for which the problem 
$$\operatorname*{inf}_{x\in X}\operatorname*{sup}_{y\in Y} \ell (x,y)$$
is equivalent to problem (\ref{eq-infsupL}) in the following way \cite[Example 11.52]{RocWet97}:
\[ \ell (x,y)=\left\{
\begin{array}{r@{\quad\mbox{ if }\quad}l}
L(x,y) & x\in X_0, y\in Y_0,\\
-\infty & x\in X_0, y\in Y\setminus Y_0,\\
+\infty & x\in X\setminus X_0.
\end{array}
\right.
\] 
\item[\rm (g)] The previous item illustrates that the unilateral approach in convex analysis becomes complicated when infimum problems and supremum problems are combined. Duality theory usually works with such combined problems. Then one has to be very careful in using the notion of the effective domain or adding $+\infty$ and $-\infty$. The usual indicator function in convex analysis should not be added to functions which have to be maximized.
\end{itemize}
\smallskip

\section{Unified approach to extended real-valued functions}\label{sec-def-ext}
We introduce the symbol $\nu$ as a function value in arguments which are not feasible otherwise. Keeping this in mind, definitions of notions and properties for extended real-valued functions will emerge in a natural way.
For sets $A\subseteq\overline{\mathbb{R}}$ we define $A_{\nu}:=A\cup\{\nu\}$. 

On subsets of a space $X$ $(X\not= \emptyset )$, we consider functions which can take values in $\overline{\mathbb{R}}$.  
If a function $\varphi$ is defined on a subset $X_0\subseteq X$, we extend the range of definition to the entire space $X$ by defining $\varphi(x):=\nu$ for all $x\in X\setminus X_{0}$. This yields a function
$\varphi : X\to \overline{\mathbb{R}}_{\nu}$. We call functions with values in $\overline{\mathbb{R}}_{\nu}$ {\bf extended real-valued functions}
and often refer to them simply as functionals.

\begin{definition}\label{d-eff-part}
Consider an extended real-valued function $\varphi : X\to \overline{\mathbb{R}}_{\nu}$ on some non\-empty set $X$.
We define its \textbf{(effective) domain} as 
\[ \operatorname*{dom}\varphi:=\{x\in X\mid \varphi(x)\in\overline{\mathbb{R}}\}. \]
$\varphi$ is \textbf{trivial} if
$\operatorname*{dom}\varphi =\emptyset$.
$\varphi$ is called \textbf{finite-valued} on $X_0\subseteq X$ if $\varphi(x)\in\mathbb{R}$ for all $x\in X_0$. 
$\varphi$ is said to be \textbf{finite-valued} if $\varphi$ is finite-valued on $X$.\\ 
$\varphi$ is called \textbf{proper} if
$\varphi$ is nontrivial and finite-valued on $\operatorname*{dom}\varphi$. Otherwise, it is said to be \textbf{improper}.\\ 
For $\varphi$ being nontrivial, we introduce $\varphi^{\operatorname*{eff}} : \operatorname*{dom}\varphi \rightarrow \overline{\mathbb{R}}$ by $\varphi^{\operatorname*{eff}}(x)=\varphi(x)$ for all $x\in\operatorname*{dom}\varphi $ and call it the 
\textbf{effective part} of $\varphi$.

\end{definition}

\begin{remark}
Our definition of the effective domain essentially differs from the usual definition in convex analysis (see Section \ref{s-fctal-cx-anal}) since we use $\nu $ instead of $+\infty $ and admit $+\infty $ to be a function value that comes into existence, e.g., by defining the function as some supremum, and may be of interest in problems which depend on the function. 
\end{remark}

The finite-valued functionals are just the functions $\varphi : X\to \mathbb{R}$ and are just the proper functionals with 
$\operatorname*{dom}\varphi = X$. For finite-valued functionals, $\varphi^{\operatorname*{eff}}=\varphi$. The proper functions are the nontrivial functions $\varphi : X\to \mathbb{R}_{\nu}$.

Each function and operation applied to $\nu$ has to result in $\nu$ if the function maps into $\overline{\mathbb{R}}_{\nu}$, and to result in the empty set if the function value should be a set. \\
In $\overline{\mathbb{R}}_{\nu}$, the following rules are defined for the calculation with $\nu$:
\[ \begin{array}{l} 
\forall y \in \overline{\mathbb{R}}: \quad \nu \not\le y,   \quad   y \not\le \nu,    \\
- \nu=\nu, \\
+ \infty  + (- \infty) =  - \infty  + (+ \infty)  =\nu, \\
\forall y\in\overline{\mathbb{R}}_{\nu}: \quad y \cdot \nu  =  \nu \cdot y  =\nu,  \\ 
\forall y \in \overline{\mathbb{R}}_{\nu}: \quad y + \nu  =  \nu  + y = \nu. 
\end{array} \]

The above definitions extend the binary relations $<$, $\le$, $>$, $\ge$, $=$, the binary operations $+$ and $\cdot$ as well as the unary operation $-$ to $\overline{\mathbb{R}}_{\nu}$.
Note that the relations $\not<$, $\not\leq$, $\not>$ and $\not\geq$ coincide with $\geq$, $>$, $\leq$ and $<$, respectively, on $\overline{\mathbb{R}}$, but not on $\overline{\mathbb{R}}_{\nu}$. 
Take also into consideration that we have defined $-y$ for all $y\in\overline{\mathbb{R}}_{\nu}$ as a unary operation, but that $y+(-y)=\nu\not= 0$ for $y\in\{-\infty,+\infty,\nu\}$.
With this unary operation, we can define the subtraction on $\overline{\mathbb{R}}_{\nu}$ by
$y_1-y_2:=y_1+(-y_2)$ for all $y_1,y_2\in\overline{\mathbb{R}}_{\nu}$.
Moreover, we define
\[ |+ \infty|=|- \infty|=+\infty \mbox{  and  } |\nu |=\nu. \]

Now we can transfer these definitions to functions.\\
For some nonempty set $X$, functions $f,g: X\to \overline{\mathbb{R}}_{\nu}$ and $\lambda\in\overline{\mathbb{R}}$, we define 
\[ \begin{array}{rlll} 
-f: & X\to \overline{\mathbb{R}}_{\nu} & \mbox{ by } & \quad(-f)(x)=-f(x)\mbox{ for all }x\in X, \\
\lambda\cdot f: & X\to \overline{\mathbb{R}}_{\nu} & \mbox{ by } & \quad(\lambda\cdot f)(x)=\lambda\cdot f(x)\mbox{ for all }x\in X, \\
f+g: & X\to \overline{\mathbb{R}}_{\nu} & \mbox{ by } & \quad (f+g)(x)=f(x)+g(x)\mbox{ for all }x\in X,  \\
f+\lambda : & X\to \overline{\mathbb{R}}_{\nu} & \mbox{ by } & \quad (f+\lambda )(x)=f(x)+\lambda\mbox{ for all }x\in X,  \\
\lambda +f: & X\to \overline{\mathbb{R}}_{\nu} & \mbox{ by } & \quad \lambda +f=f+\lambda. 
\end{array} \]
Note that\\ 
$\operatorname*{dom}(f+g) = (\operatorname*{dom}f\cap\operatorname*{dom}g)\setminus\{x\in X\mid f(x)\in\{-\infty,+\infty\}\;\mbox{ and }\; g(x)=-f(x)\},$\\
$\operatorname*{dom}(f+\lambda )  =  \operatorname*{dom}(\lambda +f)=\operatorname*{dom}f\setminus\{x\in X\mid f(x)=-\lambda\}\mbox{  if }\lambda\in\{-\infty,+\infty\}.$

Extended real-valued functions can be used as indicator functions of sets.
\begin{definition}\label{def-indic}
For a subset $A$ of a space $X$, the \textbf{indicator function} of $A$ is the function
$\iota_A : X\rightarrow\overline{\mathbb{R}}_{\nu}$ defined by
\begin{displaymath}
\iota_A (x):= \left \{
\begin{array}{ll}
0 & \mbox{ if} \quad x \in A, \\
\nu & \mbox{ if} \quad x \in X \setminus A.
\end{array} \right.
\end{displaymath} 
\end{definition}

\begin{remark}
The usual definition of indicator functions in convex analysis uses $+\infty $ instead of $\nu $.
The indicator functions in measure theory are defined with values $1$ and $0$ instead of $0$ and $\nu$, respectively.
\end{remark}

The extension of functions to the whole space is not the only reason for the importance of extended real-valued functions. Such functions also yield the possibility to replace an optimization problem with side conditions by a free optimization problem. 
If we are looking for optimal values of some function $f: X\to \overline{\mathbb{R}}_{\nu}$ on the set $A\subset X$, then this problem is equivalent to
the calculation of optimal values of the function $g: X\to \overline{\mathbb{R}}_{\nu}$ if $g:=f+\iota_A$.

Since infima and suprema play a central role in optimization theory, we now extend these notions to $\overline{\mathbb{R}}_{\nu}$.

\begin{definition}
A set $A\subseteq\overline{\mathbb{R}}_{\nu}$ is called \textbf{trivial} if $A\subseteq\{\nu \}$.
$b\in\overline{\mathbb{R}}$ is a \textbf{lower bound} of a nontrivial set $A\subseteq\overline{\mathbb{R}}_{\nu}$ if $a\not < b$ for all $a\in A$.
$b\in\overline{\mathbb{R}}$ is an \textbf{upper bound} of a nontrivial set $A\subseteq\overline{\mathbb{R}}_{\nu}$ if $a\not > b$ for all $a\in A$.
A nontrivial set $A\subseteq\overline{\mathbb{R}}_{\nu}$ is \textbf{bounded below} or \textbf{bounded above} if there exists some real lower bound or some real upper bound, respectively, of $A$. It is \textbf{bounded} if it is bounded below and bounded above.
The \textbf{infimum} $\operatorname*{inf}A$ and the \textbf{supremum} $\operatorname*{sup}A$ of $A\subseteq\overline{\mathbb{R}}_{\nu}$ are defined by
\begin{displaymath}
\operatorname*{inf}A:= \left \{
\begin{array}{cl}
\nu & \mbox{ if } A \mbox{ is trivial}, \\
\mbox{ the largest lower bound of } A & \mbox{ otherwise},
\end{array} \right.
\end{displaymath}
\begin{displaymath}
\operatorname*{sup}A:= \left \{
\begin{array}{cl}
\nu & \mbox{ if } A \mbox{ is trivial}, \\
\mbox{ the smallest upper bound of } A & \mbox{ otherwise}.
\end{array} \right.
\end{displaymath} 
We say that $A$ has a \textbf{minimum} $\min A$ if $\operatorname*{inf}A\in A\cap\mathbb{R}$ and define, in this case, $\min A:=\operatorname*{inf}A$.
We say that $A$ has a \textbf{maximum} $\max A$ if $\operatorname*{sup}A\in A\cap\mathbb{R}$ and define, in this case, $\max A:=\operatorname*{sup}A$.
\end{definition}

Obviously, $-\infty$ is a lower bound and $+\infty$ is an upper bound of each nontrivial set in $\overline{\mathbb{R}}_{\nu}$. If $A\cap \overline{\mathbb{R}}=\{+\infty\}$, then $+\infty$ is also a lower bound of $A$. We get $\operatorname*{inf}\emptyset =\operatorname*{sup}\emptyset =\nu$. 
For nonempty sets in $\overline{\mathbb{R}}$, the above notions coincide with the usual ones.

Now we can define the Minkowski functional in our framework. 
\begin{definition}
Consider some subset $A$ of a linear space $X$ with $0\in A$.
The \textbf{Minkowski functional} $p_A:X\rightarrow\overline{\mathbb{R}}_{\nu}$ of $A$ is defined by
\[ p_A(x)=\operatorname*{inf}\{\lambda >0\mid x\in\lambda A\}.\]
\end{definition}

Then the Minkowski functional of a cone is just its indicator function \cite{Mor67}.

Let us adapt notions which are needed for dealing with differentiability and other properties of functions to extended real-valued functions. 

\begin{definition}
Assume that $X$ is an arbitrary set and $\varphi : X\to \overline{\mathbb{R}}_{\nu}$.\\
The \textbf{infimum} and the \textbf{supremum} of $\varphi$ (on $X$) are defined by $\operatorname*{inf}_{x\in X}\varphi (x) :=\operatorname*{inf}\{\varphi (x)\mid x\in X\}$ and $\operatorname*{sup}_{x\in X}\varphi (x) :=\operatorname*{sup}\{\varphi (x)\mid x\in X\}$, respectively.\\
If $\{\varphi (x)\mid x\in X\}$ has a minimum or maximum, we say that $\varphi $ attains a \textbf{(global) minimum} or a \textbf{(global) maximum}, respectively, on $X$ and denote it by\\
$\min_{x\in X}\varphi(x)$ or $\max_{x\in X}\varphi(x)$, respectively.\\
Suppose now $\operatorname*{dom}\varphi\not=\emptyset$.\\
$t\in\overline{\mathbb{R}}$ is called an \textbf{upper bound} or a \textbf{lower bound} of $\varphi$ (on $X$) if it is an upper or lower bound of $\{\varphi(x)\mid x\in X\}$, respectively.
$\varphi$ is called \textbf{bounded above}, \textbf{bounded below} or \textbf{bounded} (on $X$) if $\{\varphi(x)\mid x\in X\}$ is bounded above, bounded below or bounded, respectively.
\end{definition}

Take into consideration that each nontrivial extended real-valued functional has an upper bound and a lower bound, but that it is only bounded above or bounded below if there exists some real upper bound or lower bound, respectively. For functions with values in $\overline{\mathbb{R}}$ only, the above definition and the next one are compatible with the usual notions. 

\begin{definition}
Suppose that $X$ is a topological space and $\varphi : X\to \overline{\mathbb{R}}_{\nu}$.\\
If $x^0\in\operatorname*{cl}\operatorname*{dom}\varphi$ and if there exists some $g\in\overline{\mathbb{R}}$ such that for each neighborhood $V$ of $g$ there exists some neighborhood $U$ of $x^0$ with $\varphi (x)\in V$ for all $x\in U\cap\operatorname*{dom}\varphi$, then $g$ is said to be the limit of $\varphi$ at $x^0$. If $x^0\in X\setminus \operatorname*{cl}\operatorname*{dom}\varphi$ or if there does not exist some limit of $\varphi$ at $x^0$ in $\overline{\mathbb{R}}$,
then the limit of $\varphi$ at $x^0$ is defined as $\nu $. The \textbf{limit} of $\varphi$ at $x^0$ is denoted by $\operatorname*{lim}_{x\to x^0}\varphi (x)$.
\end{definition}

For derivatives of functions, we also need limits of sequences.
\begin{definition}\label{d-lim_nu}
A function mapping $\mathbb{N}$ into $\overline{\mathbb{R}}$ is called an \textbf{(extended real-valued) sequence}.
We will denote such a sequence by $(a_n)$, where $a_n$ is the function value of $n$ for each $n\in\mathbb{N}$.
The sequence is \textbf{bounded} if the mapping is bounded.
A value $g\in\overline{\mathbb{R}}$ is said to be a \textbf{limit} of $(a_n)$, denoted as $\operatorname*{lim}_{n\to +\infty} a_n$,
if for each neighborhood $U$ of $g$ there exists some $n_0\in\mathbb{N}$ such that $a_n\in U$ for each $n\in\mathbb{N}$ with $n>n_0$. 
If the sequence does not have any limit in $\overline{\mathbb{R}}$, then $\nu $ is defined to be the limit of the sequence.\\
A value $g\in\overline{\mathbb{R}}$ is a \textbf{cluster point} of $(a_n)$ if each neighborhood of $g$ contains $a_n$ for an infinite number of elements $n\in \mathbb{N}$. 
Let $C$ denote the set of cluster points of $(a_n)$. $\operatorname*{sup} C$ is called the \textbf{limit superior} $\;\operatorname*{lim}\operatorname*{sup}a_n$,
$\operatorname*{inf} C$ is called the \textbf{limit inferior} $\;\operatorname*{lim}\operatorname*{inf}a_n\;$ of $(a_n)$. 
\end{definition}

For real-valued sequences, the above notions work as usual.

Example \ref{ex-extended-real} demonstrates that linear combinations of improper functionals have to be used carefully.
This requires alternative definitions of important properties like convexity of functions. The alternative way of describing such a property sometimes uses the epigraph or the hypograph of a function.

\begin{definition}
Assume that  $X$ is a non\-empty set and $\varphi : X\to \overline{\mathbb{R}}_{\nu}$.
The \textbf{epigraph} of $\varphi$ is defined by
\begin{equation*}
\operatorname*{epi}\varphi:=\{(x,t)\in X\times \mathbb{R}\mid \varphi(x)\le t\}.
\end{equation*}
The \textbf{hypograph} or \textbf{subgraph} of $\varphi$ is the set
\begin{equation*}
\operatorname*{hypo}\varphi:=\{(x,t)\in X\times \mathbb{R}\mid \varphi(x)\ge t\}.
\end{equation*}
\end{definition}

Whenever $X$ is a topological space, the investigation of topological properties of the epigraph or of the hypograph refers to the space $X\times \mathbb{R}$.

Let us mention some immediate consequences of the definition.
\begin{lemma}\label{l-epihypo}
Let $X$ be a non\-empty set and $\varphi : X\to \overline{\mathbb{R}}_{\nu}$.
\begin{itemize}
\item[(a)] $(x,t)\in\operatorname*{epi}\varphi$ if and only if $(x,-t)\in \operatorname*{hypo}(-\varphi)$.
\item[(b)] For each $x\in X$, $\varphi (x)= -\infty $ if and only if $\{x\}\times \mathbb{R}\subseteq \operatorname*{epi}\varphi$.
In this case, $(\{x\}\times \mathbb{R})\cap \operatorname*{hypo}\varphi =\emptyset$.
\item[(c)] For each $x\in X$, $\varphi (x)= +\infty $ if and only if $\{x\}\times \mathbb{R}\subseteq \operatorname*{hypo}\varphi$.
In this case, $(\{x\}\times \mathbb{R})\cap \operatorname*{epi}\varphi =\emptyset$.
\end{itemize}
\end{lemma}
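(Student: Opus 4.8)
The plan is to prove each of the three items directly from the definitions of $\operatorname*{epi}\varphi$ and $\operatorname*{hypo}\varphi$, and then to obtain item (c) as a consequence of (a) and (b) rather than re-deriving it from scratch. Throughout, the single point demanding genuine care is the behavior of the symbolic value $\nu$, which is incomparable to every element of $\overline{\mathbb{R}}$ and satisfies $-\nu = \nu$; in particular one must never silently replace $\not\le$ by $>$, since by the remark following the calculation rules these no longer coincide on $\overline{\mathbb{R}}_{\nu}$. For (a), I would unwind both memberships: since $t\in\mathbb{R}$ exactly when $-t\in\mathbb{R}$, the claim reduces to the pointwise equivalence $\varphi(x)\le t \iff -\varphi(x)\ge -t$ for fixed $x\in X$ and $t\in\mathbb{R}$. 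I would split into two cases. If $\varphi(x)\in\overline{\mathbb{R}}$, this is the usual fact that negation reverses order in $\overline{\mathbb{R}}$ (one checks the subcases $\varphi(x)=\pm\infty$ against the rule $-\infty<y<+\infty$). If $\varphi(x)=\nu$, then the left side fails because $\nu\not\le t$, while the right side reads $\nu\ge -t$ (using $-\nu=\nu$), which also fails; both sides being false, the equivalence holds.

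For (b), the forward implication is immediate: if $\varphi(x)=-\infty$, then $-\infty\le t$ for every $t\in\mathbb{R}$, so $\{x\}\times\mathbb{R}\subseteq\operatorname*{epi}\varphi$. For the converse I would rule out the remaining values of $\varphi(x)\in\overline{\mathbb{R}}_{\nu}$ one by one: if $\varphi(x)=\nu$ or $\varphi(x)=+\infty$ then $\varphi(x)\le t$ already fails for any real $t$, while if $\varphi(x)\in\mathbb{R}$ the choice $t=\varphi(x)-1$ violates $\varphi(x)\le t$; hence $\varphi(x)=-\infty$ is forced. The supplementary disjointness follows at once, since when $\varphi(x)=-\infty$ the condition $\varphi(x)\ge t$ fails for every $t\in\mathbb{R}$.

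For (c), rather than mirroring the argument I would apply (b) to the functional $-\varphi$, giving $(-\varphi)(x)=-\infty \iff \{x\}\times\mathbb{R}\subseteq\operatorname*{epi}(-\varphi)$. Now $(-\varphi)(x)=-\infty$ is equivalent to $\varphi(x)=+\infty$, and by (a) applied to $-\varphi$ (using $-(-\varphi)=\varphi$, which holds since $-(-y)=y$ for all $y\in\overline{\mathbb{R}}_{\nu}$) one has $(x,t)\in\operatorname*{epi}(-\varphi) \iff (x,-t)\in\operatorname*{hypo}\varphi$; as $-t$ ranges over all of $\mathbb{R}$ when $t$ does, this translates $\{x\}\times\mathbb{R}\subseteq\operatorname*{epi}(-\varphi)$ into $\{x\}\times\mathbb{R}\subseteq\operatorname*{hypo}\varphi$, yielding the claim, and the disjointness statement is handled exactly as in (b). I expect no deep obstacle here; the only thing to watch is the bookkeeping with $\nu$ in the case analysis of (a), since that is the one place where the non-standard relation rules on $\overline{\mathbb{R}}_{\nu}$ actually bite.
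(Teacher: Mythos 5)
Your proof is correct; the paper states this lemma without proof, introducing it only as an ``immediate consequence of the definition,'' and your direct case analysis on the value of $\varphi(x)$ --- with the careful handling of $\nu$ being incomparable under $\le$ and $\ge$, and the reduction of (c) to (a) and (b) via $-\varphi$ --- is precisely the verification being left implicit. No gaps.
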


Part (a) of this lemma yields:
\begin{corollary}
Let $X$ be a linear space and $\varphi : X\to \overline{\mathbb{R}}_{\nu}$.
\begin{itemize}
\item[(a)] $\operatorname*{hypo}\varphi +\operatorname*{hypo}\varphi\subseteq\operatorname*{hypo}\varphi\iff\operatorname*{epi}(-\varphi) + \operatorname*{epi}(-\varphi)\subseteq\operatorname*{epi}(-\varphi)$.
\item[(b)] $\operatorname*{hypo}\varphi$ is convex $\iff$ $\operatorname*{epi}(-\varphi)$ is convex.
\item[(c)] $\operatorname*{hypo}\varphi$ is a cone $\iff$ $\operatorname*{epi}(-\varphi)$ is a cone.
\end{itemize}
\end{corollary}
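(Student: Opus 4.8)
The plan is to reduce all three equivalences to a single structural observation: the reflection map $T:X\times\mathbb{R}\to X\times\mathbb{R}$, $T(x,t)=(x,-t)$, is a linear automorphism of the vector space $X\times\mathbb{R}$ that carries $\operatorname*{hypo}\varphi$ onto $\operatorname*{epi}(-\varphi)$. Once this is established, parts (a), (b) and (c) follow at once, because a linear bijection preserves set addition, convexity and the cone property.

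First I would record the set-level consequence of Lemma \ref{l-epihypo}(a). Applying that lemma to the function $-\varphi$ in place of $\varphi$, and using the rule $-(-y)=y$ for all $y\in\overline{\mathbb{R}}_{\nu}$ (in particular $-(-\infty)=+\infty$ and $-\nu=\nu$), so that $-(-\varphi)=\varphi$, I obtain: $(x,t)\in\operatorname*{epi}(-\varphi)$ if and only if $(x,-t)\in\operatorname*{hypo}\varphi$. In the language of $T$ this reads $\operatorname*{epi}(-\varphi)=T(\operatorname*{hypo}\varphi)$; since $T\circ T=\operatorname*{id}$, equivalently $T(\operatorname*{epi}(-\varphi))=\operatorname*{hypo}\varphi$.

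Next I would exploit that $T$ is linear and bijective. For part (a), I use $T(A+B)=T(A)+T(B)$ together with $A\subseteq B\iff T(A)\subseteq T(B)$: applying $T$ to the inclusion $\operatorname*{hypo}\varphi+\operatorname*{hypo}\varphi\subseteq\operatorname*{hypo}\varphi$ transforms it termwise into $\operatorname*{epi}(-\varphi)+\operatorname*{epi}(-\varphi)\subseteq\operatorname*{epi}(-\varphi)$, and the reverse implication is the same argument run with $T^{-1}=T$. For part (b), convexity is preserved in both directions under the linear bijection $T$. For part (c), the cone condition $\lambda c\in C$ for all $\lambda\in\mathbb{R}_{+}$ transports through $T(\lambda c)=\lambda\, T(c)$, so $C$ is a cone precisely when $T(C)$ is, which yields the equivalence upon taking $C=\operatorname*{hypo}\varphi$.

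There is no serious obstacle here, since the statement is a formal corollary of Lemma \ref{l-epihypo}(a). The only point deserving care is the very first step: checking that Lemma \ref{l-epihypo}(a), stated for $\varphi$, may legitimately be applied to $-\varphi$, and that $-(-\varphi)=\varphi$ genuinely holds pointwise in $\overline{\mathbb{R}}_{\nu}$, including at arguments where $\varphi$ takes the symbolic value $\nu$ or a value in $\{-\infty,+\infty\}$. After that, everything reduces to transporting algebraic and geometric properties along the linear isomorphism $T$, which is routine.
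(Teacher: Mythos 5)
Your argument is correct and follows the paper's intended route: the paper derives this corollary directly from Lemma 1(a) (with no written proof beyond "Part (a) of this lemma yields"), and your reflection map $T(x,t)=(x,-t)$, together with the observation that $-(-\varphi)=\varphi$ holds pointwise in $\overline{\mathbb{R}}_{\nu}$, is exactly the mechanism that makes this work. Transporting set addition, convexity and the cone property along the linear involution $T$ is a clean and complete way to finish all three parts.
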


\begin{corollary}\label{hypo-closed}
Let $X$ be a topological space and $\varphi : X\to \overline{\mathbb{R}}_{\nu}$. Then \\
$\operatorname*{hypo}\varphi$ is closed $\iff$ $\operatorname*{epi}(-\varphi)$ is closed.
\end{corollary}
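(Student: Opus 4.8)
The plan is to realize the two sets as images of one another under a single fixed homeomorphism of $X\times\mathbb{R}$ and then invoke the fact that homeomorphisms preserve closedness of subsets.

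First I would record the vertical reflection map $R:X\times\mathbb{R}\to X\times\mathbb{R}$, $R(x,t):=(x,-t)$. Since $t\mapsto -t$ is a homeomorphism of $\mathbb{R}$ and the identity on $X$ is a homeomorphism, $R$ is a homeomorphism of the product space $X\times\mathbb{R}$; moreover $R\circ R=\operatorname*{id}$, so $R$ is its own inverse.

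Next I would apply Lemma \ref{l-epihypo}(a) not to $\varphi$ but to $-\varphi$. Since $-(-\varphi)=\varphi$ (because $-(-y)=y$ for every $y\in\overline{\mathbb{R}}_{\nu}$, including $-(-\nu)=\nu$), the lemma yields that $(x,t)\in\operatorname*{epi}(-\varphi)$ if and only if $(x,-t)\in\operatorname*{hypo}\varphi$. In other words, $R$ maps $\operatorname*{epi}(-\varphi)$ bijectively onto $\operatorname*{hypo}\varphi$, with inverse $R$ itself. Note that the possibility $\varphi(x)=\nu$ (equivalently $(-\varphi)(x)=\nu$) causes no trouble: such an $x$ satisfies neither $\varphi(x)\ge t$ nor $(-\varphi)(x)\le -t$ for any $t\in\mathbb{R}$, so it contributes to neither set and the correspondence stays clean.

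Finally I would conclude via the standard fact that, for a homeomorphism $R$ and a subset $A$, $A$ is closed if and only if $R(A)$ is closed (one direction uses continuity of $R^{-1}$, the other continuity of $R$). Applying this with $A=\operatorname*{epi}(-\varphi)$ and $R(A)=\operatorname*{hypo}\varphi$ gives the desired equivalence. I do not expect a genuine obstacle here: Lemma \ref{l-epihypo}(a) already encodes the pointwise correspondence, so the only things to check carefully are that the reflection is indeed a homeomorphism of $X\times\mathbb{R}$ and that the symbolic value $\nu$ does not disturb the bijection between the two sets.
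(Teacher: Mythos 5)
Your argument is correct and is essentially the paper's own (implicit) one: the corollary is stated as an immediate consequence of Lemma \ref{l-epihypo}(a), which is precisely the reflection correspondence $(x,t)\in\operatorname*{epi}(-\varphi)\iff(x,-t)\in\operatorname*{hypo}\varphi$ that you make explicit and combine with the fact that $(x,t)\mapsto(x,-t)$ is a self-inverse homeomorphism of $X\times\mathbb{R}$. Your care with the value $\nu$ is also consistent with the paper's conventions ($-\nu=\nu$ and $\nu$ incomparable with every element of $\overline{\mathbb{R}}$), so nothing is missing.
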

\smallskip

\section{Continuity and Semicontinuity}\label{s-cont}
We now extend the definition of continuity to functionals $\varphi: X\rightarrow \overline{\mathbb{R}}_{\nu }$.

\begin{definition}\label{d-cont-ext-real}
Let $X$ be a topological space, $\varphi: X\rightarrow \overline{\mathbb{R}}_{\nu }$.
$\varphi$ is \textbf{continuous} at $x\in X$ if $x\in\operatorname*{dom}\varphi $ and $\varphi^{\operatorname*{eff}}$ is continuous at $x$. 
$\varphi$ is \textbf{continuous} on the non\-empty set $D\subseteq X$ if $D\subseteq\operatorname*{dom}\varphi $ and $\varphi^{\operatorname*{eff}}$ is continuous on $D$.
If $\operatorname*{dom}\varphi $ is nonempty and closed, then we call $\varphi$ a \textbf{continuous functional} if $\varphi$ is continuous on $\operatorname*{dom}\varphi$. 
\end{definition}

\begin{example}\label{ex-extended-func}
Consider the functional $\varphi : \mathbb{R}\to \overline{\mathbb{R}}_{\nu}$ given by
\[ \varphi(x)=\left\{
\begin{array}{r@{\quad\mbox{ if }\quad}l}
\operatorname*{tan}(x) & -\frac{\pi}{2}<x<\frac{\pi}{2},\\
-\infty &  x=-\frac{\pi}{2}, \\
+\infty &  x=\frac{\pi}{2}, \\
\nu & x<-\frac{\pi}{2} \;\mbox{ or }\; x>\frac{\pi}{2}.
\end{array}
\right.
\]

$ \operatorname*{dom}\varphi=\{x\in \mathbb{R}\mid -\frac{\pi}{2} \le x \le +\frac{\pi}{2}\} $ is closed, and
$\varphi$ is a continuous functional.
\end{example}

Note that the previous example also illustrates that there exists a one-to-one-correspondence between the closed interval $[-\frac{\pi}{2},\frac{\pi}{2}]$ and $\overline{\mathbb{R}}$ and also a one-to-one-correspondence between the open sets induced on $[-\frac{\pi}{2},\frac{\pi}{2}]$ by the Euclidean topology and the open sets in $\overline{\mathbb{R}}$. Indeed, it is well known that $\overline{\mathbb{R}}$ can be considered as a two-point compactification of $\mathbb{R}$ since (as  previously mentioned) $\overline{\mathbb{R}}$ is a compact topological space.

Functionals which are continuous on $\operatorname*{dom}\varphi$ can often be extended in such a way that the domain of the extended functional is closed and the extended functional is continuous.

\begin{example}\label{ex-betrag}
Let $\varphi : \mathbb{R}\to \overline{\mathbb{R}}_{\nu}$ be given by
\[ \varphi(x)=\left\{
\begin{array}{r@{\quad\mbox{ if }\quad}l}
\frac{1}{|x|} & x\not= 0,\\
\nu & x=0.
\end{array}
\right.
\]
$\varphi$ is continuous on $\operatorname*{dom}\varphi$, but $\operatorname*{dom}\varphi$ is not closed. $\varphi$ can be extended to a continuous functional with $\operatorname*{dom}\varphi=\mathbb{R}$ by replacing the symbolic function value $\nu$ in $x=0$ by $+\infty$.
\end{example}

Of course, not each functional which is continuous on its effective domain can be extended to a continuous function, e.g., if there exists a jump between two parts of the effective domain. But in contrast to the usual approach in convex analysis, semicontinuity and continuity of a functional cannot be destroyed by extending the function to the entire space using $\nu $.

The definition of continuity implies:
\begin{lemma}\label{l-cont-anders}
Let $X$ be a topological space and $\varphi :X \rightarrow \overline{\mathbb{R}}_{\nu }$. 
\begin{itemize}
\item[(a)] If $\varphi$ is continuous at $x^0\in\operatorname*{dom}\varphi$, then the functions 
$\lambda\varphi$ and $\varphi + \lambda$ with $\lambda\in\mathbb{R}$ are continuous at $x^0$.
\item[(b)] If $X$ is a topological vector space, $x^0, \overline{x}\in X$, $x^0-\overline{x} \in\operatorname*{dom}\varphi$ and $\varphi$ is continuous at $x^0-\overline{x}$, then
$g:X \rightarrow \overline{\mathbb{R}}_{\nu }$ defined by $g(x)=\varphi (x-\overline{x})$ is continuous at $x^0$.
\item[(c)] If $X$ is a topological vector space, $x^0\in X$, $\lambda\in\mathbb{R}$, $\lambda x^0 \in\operatorname*{dom}\varphi$ and $\varphi$ is continuous at $\lambda x^0$, then
$g:X \rightarrow \overline{\mathbb{R}}_{\nu }$ defined by $g(x)=\varphi (\lambda x)$ is continuous at $x^0$.
\end{itemize}
\end{lemma}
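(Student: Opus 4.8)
The plan is to observe that, by Definition \ref{d-cont-ext-real}, continuity of a functional at a point is nothing but continuity of its effective part at that point. Hence in each of the three cases it suffices to (i) check that $x^0$ lies in the effective domain of the transformed functional and (ii) exhibit the effective part of the transformed functional as a composition of $\varphi^{\operatorname*{eff}}$ with a map that is continuous at the relevant point. In parts (b) and (c) the auxiliary map acts on the source space $X$ and is a homeomorphism of $X$; in part (a) the auxiliary map acts on the target $\overline{\mathbb{R}}$, and its continuity is furnished by the two topological rules (a), (b) recorded for the topology $\tau$ in Section \ref{s-ext-reals}.

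For part (a), I would first settle the effective domains. For $\lambda\neq 0$ the rule $\lambda\cdot\nu=\nu$ together with $\lambda\cdot(\pm\infty)=\pm\infty$ gives $\operatorname*{dom}(\lambda\varphi)=\operatorname*{dom}\varphi$, while $\lambda\cdot y=0$ for every $y\in\overline{\mathbb{R}}$ when $\lambda=0$ and $0\cdot\nu=\nu$ give $\operatorname*{dom}(0\cdot\varphi)=\operatorname*{dom}\varphi$ as well; similarly $\nu+\lambda=\nu$ and $(\pm\infty)+\lambda=\pm\infty$ yield $\operatorname*{dom}(\varphi+\lambda)=\operatorname*{dom}\varphi$. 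In particular $x^0$ stays in the domain in every case. On this common domain one has $(\lambda\varphi)^{\operatorname*{eff}}=m_\lambda\circ\varphi^{\operatorname*{eff}}$ and $(\varphi+\lambda)^{\operatorname*{eff}}=t_\lambda\circ\varphi^{\operatorname*{eff}}$, where $m_\lambda(y)=\lambda y$ and $t_\lambda(y)=y+\lambda$ on $\overline{\mathbb{R}}$. Rule (a) of Section \ref{s-ext-reals} shows that $m_\lambda$ maps open sets to open sets for $\lambda\neq 0$, and since $m_\lambda^{-1}=m_{1/\lambda}$ is likewise open, $m_\lambda$ is a homeomorphism of $\overline{\mathbb{R}}$; for $\lambda=0$ the map $m_0$ is constant. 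Rule (b) handles $t_\lambda$ in the same way, using $t_\lambda^{-1}=t_{-\lambda}$. Hence $m_\lambda$ and $t_\lambda$ are continuous at $\varphi^{\operatorname*{eff}}(x^0)$, and the composite is continuous at $x^0$.

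For parts (b) and (c), I would use that in a topological vector space the translation $T(x)=x-\overline{x}$ is a homeomorphism of $X$, and the dilation $S(x)=\lambda x$ is a homeomorphism of $X$ for $\lambda\neq 0$. Since $g=\varphi\circ T$ (resp. $g=\varphi\circ S$), the effective domain of $g$ is the preimage $T^{-1}(\operatorname*{dom}\varphi)=\overline{x}+\operatorname*{dom}\varphi$ (resp. $S^{-1}(\operatorname*{dom}\varphi)=\lambda^{-1}\operatorname*{dom}\varphi$), which contains $x^0$ by the hypotheses $x^0-\overline{x}\in\operatorname*{dom}\varphi$ (resp. $\lambda x^0\in\operatorname*{dom}\varphi$). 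On this domain, $g^{\operatorname*{eff}}=\varphi^{\operatorname*{eff}}\circ(T|_{\operatorname*{dom}g})$ (resp. with $S$), a composition of a map that is continuous at $x^0$ with a map that is continuous at the image point $x^0-\overline{x}$ (resp. $\lambda x^0$), hence continuous at $x^0$. The degenerate case $\lambda=0$ in part (c) is separate: then $g(x)=\varphi(0)$ for all $x$, which is finite-valued because $0=\lambda x^0\in\operatorname*{dom}\varphi$, so $g$ is constant and therefore continuous at $x^0$.

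The routine verifications are the domain bookkeeping and the elementary fact that a composition of maps, each continuous at the relevant point, is continuous. The only genuine points of care are the degenerate multiplier $\lambda=0$ in parts (a) and (c), where the target- resp. source-space map is no longer a homeomorphism and must be replaced by the observation that the resulting effective part is constant, and the correct invocation of rules (a), (b) from Section \ref{s-ext-reals} to guarantee that scaling and real translation are homeomorphisms of $\overline{\mathbb{R}}$ rather than merely of $\mathbb{R}$ --- the latter being exactly the feature that lets continuity survive at points where $\varphi$ takes the values $\pm\infty$.
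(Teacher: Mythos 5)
Your proof is correct and is precisely the fleshing-out of the argument the paper leaves implicit (the lemma is stated there as an immediate consequence of Definition \ref{d-cont-ext-real}): reduce everything to continuity of $\varphi^{\operatorname*{eff}}$ pre- or post-composed with a homeomorphism --- of $\overline{\mathbb{R}}$ on the target side, justified by rules (a) and (b) of Section \ref{s-ext-reals}, or of $X$ on the source side --- after checking that the effective domain transforms as expected, with the degenerate multiplier $\lambda=0$ handled by constancy. One cosmetic slip: in part (c) with $\lambda=0$ the constant value $\varphi(0)$ need not be finite (it may be $\pm\infty$), so ``finite-valued'' is not the right word, but the conclusion stands because $\operatorname*{dom}g=X$ and any functional attaining only one value $c\in\overline{\mathbb{R}}$ on its domain is continuous there, as the paper itself notes after Definition \ref{dsemicon}.
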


The product of continuous extended real-valued functions is not necessarily continuous. 

\begin{example}
Define $f, g : \mathbb{R}\to \overline{\mathbb{R}}$ by
\[ f(x)=\left\{
\begin{array}{l@{\quad\mbox{ if }\quad}l}
\frac{1}{|x|} & x\not= 0,\\
+\infty & x=0,
\end{array}
\right.
\]
\[ g(x):=|x|. \]
Then 
\[ h(x):=g(x)\cdot f(x)=\left\{
\begin{array}{l@{\quad\mbox{ if }\quad}l}
1 & x\not= 0,\\
0\cdot (+\infty)=0 & x=0.
\end{array}
\right.
\]
$\operatorname*{dom}f=\operatorname*{dom}g=\mathbb{R}$. $f$ and $g$ are continuous functions, but $h$ is not continuous at $x=0$.
\end{example}
Let us now introduce semicontinuity of functionals, first for functionals with values in $\overline{\mathbb{R}}$ only.
\begin{definition}
Consider a topological space $X$, a nonempty set $D\subseteq X$ and a function $\varphi :D \rightarrow \overline{\mathbb{R}}$.\\
$\varphi$ is called \textbf{lower semicontinuous} at $x^{0} \in D$ if $\varphi(x^{0})=-\infty$ or
for each $h\in\mathbb{R}$ with $h < \varphi(x^{0})$ there exists some $U\in\mathcal{N}(x^{0})$ such that $\varphi(x) > h$ for all $x\in U\cap D$.\\
$\varphi$ is called \textbf{upper semicontinuous} at $x^{0} \in D$ if $\varphi(x^{0})=+\infty$ or
for each $h\in\mathbb{R}$ with $h > \varphi(x^{0})$ there exists some $U\in\mathcal{N}(x^{0})$ such that $\varphi(x) < h$ for all $x\in U\cap D$.\\
$\varphi$ is called a \textbf{lower semicontinuous function} or an \textbf{upper semicontinuous function} (on $D$) if $\varphi$ is lower semicontinuous or upper semicontinuous, respectively, at each $x\in D$.
\end{definition}

We now extend this definition to functionals with values in $\overline{\mathbb{R}}_{\nu }$.
\begin{definition}\label{dsemicon}
Let $X$  be a topological space, $\varphi: X\rightarrow \overline{\mathbb{R}}_{\nu }$.\\
$\varphi$ is \textbf{lower semicontinuous} or \textbf{upper semicontinuous} at $x\in X$ if $x\in\operatorname*{dom}\varphi $ and $\varphi^{\operatorname*{eff}}$ is lower semicontinuous or upper semicontinuous, respectively, at $x$. 
$\varphi$ is \textbf{lower semicontinuous} or \textbf{upper semicontinuous} on the nonempty set $D\subseteq X$ if $D\subseteq\operatorname*{dom}\varphi $ and $\varphi^{\operatorname*{eff}}$ is lower semicontinuous or upper semicontinuous, respectively, on $D$.
If $\operatorname*{dom}\varphi$ is nonempty and closed, then we call $\varphi$ a \textbf{lower semicontinuous function} or an \textbf{upper semicontinuous function} if $\varphi$ is lower semicontinuous or upper semicontinuous, respectively, on $\operatorname*{dom}\varphi$. 
\end{definition}

Note that each functional which can attain only one value $c\in\overline{\mathbb{R}}$ on $\operatorname*{dom}\varphi$ is
lower semicontinuous, upper semicontinuous and continuous on $\operatorname*{dom}\varphi$.

Immediately from the definitions we get the following statements.
\begin{proposition}\label{p-usc-lsc}
Let $X$ be a topological space, $\varphi: X\rightarrow \overline{\mathbb{R}}_{\nu }$, $x\in \operatorname*{dom}\varphi$.
\begin{itemize}
\item[(a)] $\varphi$ is upper semicontinuous at $x$ iff $-\varphi$ is lower semicontinuous at $x$.
\item[(b)] $\varphi$ is continuous at $x$ iff $\varphi$ is lower semicontinuous and upper semicontinuous at $x$.
\end{itemize}
\end{proposition}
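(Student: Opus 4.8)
The plan is to reduce both equivalences, via Definitions \ref{d-cont-ext-real} and \ref{dsemicon}, to the corresponding statements about the $\overline{\mathbb{R}}$-valued effective part $\varphi^{\operatorname*{eff}}$, and then to settle those by unwinding the $h$-conditions of semicontinuity against the neighborhood bases of the topology on $\overline{\mathbb{R}}$. Since $x\in\operatorname*{dom}\varphi$ is assumed throughout, the clause ``$x\in\operatorname*{dom}\varphi$'' in each of the three definitions is automatically satisfied, so that in every case the stated property of $\varphi$ at $x$ is literally the same topological property of $\varphi^{\operatorname*{eff}}$ at $x$.

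For part (a) I would first record that $\operatorname*{dom}(-\varphi)=\operatorname*{dom}\varphi$: since $-\nu=\nu$ and $-(\pm\infty)=\mp\infty$, one has $\varphi(x)\in\overline{\mathbb{R}}$ iff $-\varphi(x)\in\overline{\mathbb{R}}$, and consequently $(-\varphi)^{\operatorname*{eff}}=-(\varphi^{\operatorname*{eff}})$ on this common domain. It then suffices to prove, for an $\overline{\mathbb{R}}$-valued $\psi$ on $D=\operatorname*{dom}\varphi$, that $\psi$ is upper semicontinuous at $x$ iff $-\psi$ is lower semicontinuous at $x$. This follows by the substitution $h\mapsto -h$: one has $\psi(x)=+\infty$ exactly when $(-\psi)(x)=-\infty$, and for real $h$ the inequality $h>\psi(x)$ is equivalent to $-h<(-\psi)(x)$, while $\psi(x')<h$ is equivalent to $(-\psi)(x')>-h$. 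Hence the defining condition for upper semicontinuity of $\psi$ translates term by term into the defining condition for lower semicontinuity of $-\psi$.

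For part (b) the task reduces to showing that an $\overline{\mathbb{R}}$-valued $\psi$ on $D$ is continuous at $x$ iff it is both lower and upper semicontinuous at $x$. In the forward direction, continuity means that for every neighborhood $V$ of $\psi(x)$ there is $U\in\mathcal{N}(x)$ with $\psi(U\cap D)\subseteq V$; taking $V=\{y\mid y>h\}$ for $h<\psi(x)$ (open in the chosen topology, and a neighborhood of $\psi(x)$ whether $\psi(x)$ is real or $+\infty$) yields lower semicontinuity, and symmetrically $V=\{y\mid y<h\}$ yields upper semicontinuity. For the converse I would argue by cases on $\psi(x)$: if $\psi(x)\in\mathbb{R}$, a basic neighborhood is $(h_1,h_2)$ with $h_1<\psi(x)<h_2$, and lower semicontinuity supplies $U_1$ with $\psi>h_1$ on $U_1\cap D$ while upper semicontinuity supplies $U_2$ with $\psi<h_2$ on $U_2\cap D$, so $U_1\cap U_2$ works; if $\psi(x)=+\infty$, upper semicontinuity is vacuous and a basic neighborhood drawn from $\mathcal{B}(+\infty)$ has the form $\{y\mid y>a\}$, which lower semicontinuity handles directly; the case $\psi(x)=-\infty$ is symmetric.

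The routine parts are the inequality bookkeeping. The one point needing genuine care is matching the neighborhood bases $\mathcal{B}(+\infty)$ and $\mathcal{B}(-\infty)$ of the order topology on $\overline{\mathbb{R}}$ with the one-sided $h$-conditions in the semicontinuity definitions, and in particular observing that when $\psi(x)=\pm\infty$ exactly one of the two semicontinuity conditions is automatically true, so that the surviving condition alone already delivers continuity.
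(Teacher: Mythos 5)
Your proof is correct and follows exactly the route the paper intends: the paper states this proposition without proof, calling it immediate from the definitions, and your argument is precisely the direct unwinding of Definitions \ref{d-cont-ext-real} and \ref{dsemicon} via $\varphi^{\operatorname*{eff}}$, the substitution $h\mapsto -h$, and the neighborhood bases of $\overline{\mathbb{R}}$. The case analysis at $\psi(x)=\pm\infty$, where one semicontinuity condition becomes vacuous, is the right point to make explicit.
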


Definition \ref{dsemicon} implies:
\begin{lemma}\label{l-semicon-anders}
Let $X$ be a topological space and $\varphi :X \rightarrow \overline{\mathbb{R}}_{\nu }$. 
\begin{itemize}
\item[(a)] If $\varphi$ is lower semicontinuous at $x^0\in\operatorname*{dom}\varphi$, then the functions 
$\lambda\varphi$ with $\lambda \in\mathbb{R}_{+}$ and $\varphi + c$ with $c\in\mathbb{R}$ are lower semicontinuous at $x^0$.
\item[(b)] If $X$ is a topological vector space, $x^0, \overline{x}\in X$, $x^0-\overline{x} \in\operatorname*{dom}\varphi$ and $\varphi$ is lower semicontinuous at $x^0-\overline{x}$, then
$g:X \rightarrow \overline{\mathbb{R}}_{\nu }$ defined by $g(x)=\varphi (x-\overline{x})$ is lower semicontinuous at $x^0$.
\item[(c)] If $X$ is a topological vector space, $x^0\in X$, $\lambda\in\mathbb{R}$, $\lambda x^0 \in\operatorname*{dom}\varphi$ and $\varphi$ is lower semicontinuous at $\lambda x^0$, then
$g:X \rightarrow \overline{\mathbb{R}}_{\nu }$ defined by $g(x)=\varphi (\lambda x)$ is lower semicontinuous at $x^0$.
\end{itemize}
The same statements hold for upper semicontinuity.
\end{lemma}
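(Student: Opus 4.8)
The plan is to reduce every assertion to the corresponding statement for the $\overline{\mathbb{R}}$-valued function $\varphi^{\operatorname*{eff}}$, so that Definition \ref{dsemicon} lets us forget the symbol $\nu$ once the effective domain has been accounted for. In each case I would first determine the domain of the transformed functional, verify that $x^0$ belongs to it, and identify the transformed functional's effective part with the corresponding transformation of $\varphi^{\operatorname*{eff}}$; after that the remaining work is the familiar neighbourhood argument for $\overline{\mathbb{R}}$-valued lower semicontinuous functions.

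For (a) I would treat the scalar multiple first. The rules $\lambda\cdot\nu=\nu$ and, for $\lambda>0$, $\lambda\cdot(\pm\infty)=\pm\infty$ give $\operatorname*{dom}(\lambda\varphi)=\operatorname*{dom}\varphi$, so $x^0\in\operatorname*{dom}(\lambda\varphi)$ and $(\lambda\varphi)^{\operatorname*{eff}}=\lambda\,\varphi^{\operatorname*{eff}}$. If $\lambda=0$ the effective part is the constant $0$, which is lower semicontinuous; if $\lambda>0$ I would argue from the definition: when $\varphi(x^0)=-\infty$ we also have $\lambda\varphi(x^0)=-\infty$ and there is nothing to prove, while for $\varphi(x^0)>-\infty$ any $h<\lambda\varphi(x^0)$ satisfies $h/\lambda<\varphi(x^0)$, so a neighbourhood $U$ with $\varphi(x)>h/\lambda$ on $U\cap\operatorname*{dom}\varphi$ yields $\lambda\varphi(x)>h$ there, using that multiplication by $\lambda>0$ is order-preserving on $\overline{\mathbb{R}}$. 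The case $\varphi+c$ is identical: since $c\in\mathbb{R}$ produces no value outside $\overline{\mathbb{R}}$ we get $\operatorname*{dom}(\varphi+c)=\operatorname*{dom}\varphi$, and the same $\pm\infty$/finite split with $h-c<\varphi(x^0)$ finishes it.

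For (b) and (c) I would use that, in a topological vector space, the maps $x\mapsto x-\overline{x}$ and, for $\lambda\neq0$, $x\mapsto\lambda x$ are homeomorphisms. In (b), $\operatorname*{dom}g=\operatorname*{dom}\varphi+\overline{x}$ contains $x^0$ because $x^0-\overline{x}\in\operatorname*{dom}\varphi$, and for $h<g(x^0)=\varphi(x^0-\overline{x})$ a lower-semicontinuity neighbourhood $V$ of $x^0-\overline{x}$ pulls back to the neighbourhood $V+\overline{x}$ of $x^0$ on which $g>h$; the value $-\infty$ is again immediate. In (c) the case $\lambda=0$ makes $g$ the constant $\varphi(0)\in\overline{\mathbb{R}}$, which exists since $\lambda x^0=0\in\operatorname*{dom}\varphi$, hence $g$ is lower semicontinuous, and for $\lambda\neq0$ the homeomorphism $x\mapsto\lambda x$ transfers neighbourhoods exactly as in (b), with $\operatorname*{dom}g=\lambda^{-1}\operatorname*{dom}\varphi\ni x^0$.

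Finally, the upper-semicontinuity assertions follow formally from Proposition \ref{p-usc-lsc}(a) once I record that $\operatorname*{dom}(-\varphi)=\operatorname*{dom}\varphi$ together with the pointwise identities $-(\lambda\varphi)=\lambda(-\varphi)$, $-(\varphi+c)=(-\varphi)+(-c)$, and $-\bigl(\varphi(\,\cdot\,-\overline{x})\bigr)=(-\varphi)(\,\cdot\,-\overline{x})$ (and the analogue for $\lambda x$), each checked case-by-case from the $\nu$-rules. Applying the lower-semicontinuity part already proved to $-\varphi$ and negating then yields upper semicontinuity of the transformed functional. The only point demanding genuine care throughout is the bookkeeping that keeps $x^0$ inside the transformed effective domain and the separate treatment of the value $-\infty$ mandated by the definition of lower semicontinuity; everything else is the standard monotonicity-and-neighbourhood argument.
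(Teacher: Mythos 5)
Your proof is correct and follows exactly the route the paper intends: the paper states this lemma without proof as an immediate consequence of Definition \ref{dsemicon}, and your reduction to $\varphi^{\operatorname*{eff}}$, the domain bookkeeping, the neighbourhood/homeomorphism arguments, and the passage to upper semicontinuity via Proposition \ref{p-usc-lsc}(a) simply spell out that implication. Nothing is missing; the $\lambda=0$ and $\varphi(x^0)=-\infty$ cases you single out are precisely the points that need the explicit check.
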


\begin{corollary}\label{c-semi-induced}
Let $(X,\tau )$ be a topological space, $\varphi: X\rightarrow \overline{\mathbb{R}}_{\nu }$, $D\subseteq\operatorname*{dom}\varphi$ with $D\not=\emptyset$.
Then the following properties of $\varphi$ are equivalent to each other:
\begin{itemize}
\item[(a)] $\varphi$ is lower semicontinuous on $D$.
\item[(b)] Each set $\{x\in D\mid \varphi (x)>t\}$, $t\in\mathbb{R}$, is open w.r.t. the topology induced by $\tau$ on D.
\item[(c)] Each set $\{x\in D\mid \varphi (x)\le t\}$, $t\in\mathbb{R}$, is closed w.r.t. the topology induced by $\tau$ on D.
\end{itemize}
\end{corollary}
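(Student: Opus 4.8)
The plan is to exploit the hypothesis $D\subseteq\operatorname*{dom}\varphi$, which guarantees that $\varphi(x)\in\overline{\mathbb{R}}$ for every $x\in D$, so that on $D$ the value $\nu$ never occurs and the whole argument can be carried out within $\overline{\mathbb{R}}$ using $\varphi^{\operatorname*{eff}}$. By Definition \ref{dsemicon}, statement (a) is \emph{by definition} the lower semicontinuity of $\varphi^{\operatorname*{eff}}$ on $D$, so all three conditions are really statements about the $\overline{\mathbb{R}}$-valued restriction of $\varphi$ to $D$. I would establish the equivalences as (a)$\iff$(b), proved directly from the neighborhood definition, together with (b)$\iff$(c), which is a complementation step.

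For (a)$\Rightarrow$(b), fix $t\in\mathbb{R}$ and set $S:=\{x\in D\mid \varphi(x)>t\}$. For $x^0\in S$ one has $\varphi(x^0)>t>-\infty$, hence $\varphi(x^0)\not=-\infty$, so the defining condition of lower semicontinuity at $x^0$ applies with the admissible choice $h=t<\varphi(x^0)$ and produces $U\in\mathcal{N}(x^0)$ with $\varphi(x)>t$ for all $x\in U\cap D$; thus $U\cap D\subseteq S$ and $S$ is open in the induced topology. For (b)$\Rightarrow$(a), fix $x^0\in D$: if $\varphi(x^0)=-\infty$ then lower semicontinuity holds by definition, and otherwise, for each real $h<\varphi(x^0)$ the point $x^0$ lies in $\{x\in D\mid \varphi(x)>h\}$, which is open by (b), so a neighborhood witnessing the definition is obtained at once.

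The equivalence (b)$\iff$(c) is the routine complementation, and it is precisely here that $D\subseteq\operatorname*{dom}\varphi$ is essential: because $\varphi(x)\in\overline{\mathbb{R}}$ for $x\in D$, the relation $\varphi(x)>t$ is the negation of $\varphi(x)\le t$ on $D$ (recall that $\not\le$ and $>$ coincide on $\overline{\mathbb{R}}$, though \emph{not} on $\overline{\mathbb{R}}_{\nu}$), so the sets appearing in (b) and (c) are complementary within $D$. Hence one is open in the induced topology exactly when the other is closed.

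I do not expect a genuine obstacle here; the only point requiring care is that the superlevel sets in (b) automatically discard the points where $\varphi=-\infty$, which are exactly the points at which lower semicontinuity is granted for free by the definition, so no separate bookkeeping for the value $-\infty$ is needed beyond the single case split in (b)$\Rightarrow$(a). The usual pitfalls of the $\nu$-calculus are avoided entirely by restricting attention to $D\subseteq\operatorname*{dom}\varphi$.
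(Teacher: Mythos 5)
Your proof is correct and is precisely the routine unwinding of Definition \ref{dsemicon} that the paper leaves implicit: Corollary \ref{c-semi-induced} is stated there without proof as an immediate consequence of the definitions. You read ``lower semicontinuous on $D$'' as a property of the restriction of $\varphi$ to $D$ (neighborhoods intersected with $D$ rather than with $\operatorname*{dom}\varphi$), which is the reading under which the statement holds, and your remark that $>$ is the negation of $\le$ on $\overline{\mathbb{R}}$ but not on $\overline{\mathbb{R}}_{\nu}$ is exactly the point that makes the hypothesis $D\subseteq\operatorname*{dom}\varphi$ indispensable for the complementation step (b)$\iff$(c).
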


The next propositions will connect semicontinuity of a functional $\varphi$ with topological properties of the sublevel sets 
and of the sets $\{x\in X\mid \varphi (x)>t\}$ without referring explicitly 
to some induced topology.
\begin{definition}
Let $R$ denote some binary relation on $\overline{\mathbb{R}}_{\nu }$, $X$ be a nonempty set, $\varphi: X\rightarrow \overline{\mathbb{R}}_{\nu }$.
Then we define $\operatorname*{lev}_{\varphi,R}(t):= \{x\in X \mid \varphi (x) R t\}$ for $t\in\mathbb{R}$.
\end{definition} 

\begin{proposition}\label{prop-semicon-levels-open}
Let  $\varphi: X\rightarrow \overline{\mathbb{R}}_{\nu }$ be a nontrivial function on a topological space $X$.
\begin{itemize}
\item[(a)]  $\varphi$ is lower semicontinuous on $\operatorname*{dom}\varphi$ if the sets $\operatorname*{lev}_{\varphi,>}(t)$ are open for all 
$t\in \mathbb{R}$. 
\item[(b)] Assume that $\operatorname*{dom}\varphi$ is open. Then \\
$\varphi$ is lower semicontinuous on $\operatorname*{dom}\varphi$ if and only if the sets $\operatorname*{lev}_{\varphi,>}(t)$ are open for all $t\in \mathbb{R}$. 
\item[(c)] If $\varphi$ is bounded below, then:\\
The sets $\operatorname*{lev}_{\varphi,>}(t)$ 
are open for all $t\in \mathbb{R}$ if and only if $\operatorname*{dom}\varphi$ is open and $\varphi$ is lower semicontinuous on $\operatorname*{dom}\varphi$. 
\end{itemize}
\end{proposition}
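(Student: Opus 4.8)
The plan is to reduce everything to Corollary \ref{c-semi-induced} applied with $D=\operatorname*{dom}\varphi$, after first clarifying how the superlevel sets relate to the domain. The key preliminary observation is that $\operatorname*{lev}_{\varphi,>}(t)\subseteq\operatorname*{dom}\varphi$ for every $t\in\mathbb{R}$: since $\nu\not\le y$ and $y\not\le\nu$ for all $y\in\overline{\mathbb{R}}$, no point $x$ with $\varphi(x)=\nu$ can satisfy $\varphi(x)>t$, so the points counted by the superlevel set automatically lie in the domain. Consequently $\operatorname*{lev}_{\varphi,>}(t)=\{x\in\operatorname*{dom}\varphi\mid\varphi(x)>t\}$, which is exactly the set appearing in Corollary \ref{c-semi-induced}(b) with $D=\operatorname*{dom}\varphi$. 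Thus $\varphi$ is lower semicontinuous on $\operatorname*{dom}\varphi$ if and only if each $\operatorname*{lev}_{\varphi,>}(t)$ is open in the subspace topology on $\operatorname*{dom}\varphi$.

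With this in hand, part (a) is immediate: if $\operatorname*{lev}_{\varphi,>}(t)$ is open in $X$, then since it is a subset of $\operatorname*{dom}\varphi$ it equals its own trace on $\operatorname*{dom}\varphi$ and is therefore open in the subspace topology, so Corollary \ref{c-semi-induced} yields lower semicontinuity on $\operatorname*{dom}\varphi$. For part (b) the direction ``$\Leftarrow$'' is just part (a) (and needs no openness of the domain). For ``$\Rightarrow$'', lower semicontinuity gives, via Corollary \ref{c-semi-induced}, that each $\operatorname*{lev}_{\varphi,>}(t)$ is open in the subspace topology, i.e. of the form $O\cap\operatorname*{dom}\varphi$ with $O$ open in $X$; since $\operatorname*{dom}\varphi$ is now assumed open in $X$, this intersection is open in $X$.

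Part (c) is where the boundedness assumption does the work. The direction ``$\Leftarrow$'' is exactly the ``$\Rightarrow$'' direction of (b) and again needs no boundedness. For ``$\Rightarrow$'', lower semicontinuity on $\operatorname*{dom}\varphi$ already follows from part (a), so the only new point is the openness of $\operatorname*{dom}\varphi$. Here I would use a real lower bound $b\in\mathbb{R}$ of $\varphi$: since $b$ is a lower bound, $\varphi(x)\not<b$, hence $\varphi(x)\ge b$, for every $x\in\operatorname*{dom}\varphi$ (in particular $\varphi$ never attains $-\infty$). Fixing any real $t<b$, every $x\in\operatorname*{dom}\varphi$ satisfies $\varphi(x)\ge b>t$, while every $x\notin\operatorname*{dom}\varphi$ has $\varphi(x)=\nu\not>t$; therefore $\operatorname*{lev}_{\varphi,>}(t)=\operatorname*{dom}\varphi$. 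Since this superlevel set is open in $X$ by hypothesis, $\operatorname*{dom}\varphi$ is open.

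The two genuinely nontrivial points are the opening observation that the $\nu$-calculus forces $\operatorname*{lev}_{\varphi,>}(t)\subseteq\operatorname*{dom}\varphi$, and the realization in (c) that boundedness below lets one exhibit $\operatorname*{dom}\varphi$ itself as a superlevel set $\operatorname*{lev}_{\varphi,>}(t)$ for small enough $t$. Everything else is routine bookkeeping distinguishing openness in $X$ from openness in the subspace topology on $\operatorname*{dom}\varphi$, which is precisely why the hypotheses on the domain (assumed open in (b), forced open in (c)) are exactly what is needed to pass between the two notions.
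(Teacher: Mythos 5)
Your proof is correct. Parts (a) and (c) follow essentially the same path as the paper: (a) is immediate, and in (c) you recover the openness of $\operatorname*{dom}\varphi$ by exhibiting it as the superlevel set $\operatorname*{lev}_{\varphi,>}(t)$ for any real $t$ strictly below a real lower bound (the paper takes $t=c-1$ and phrases the same fact via complements, $X\setminus \operatorname*{lev}_{\varphi,>}(c-1)=X\setminus\operatorname*{dom}\varphi$), after which both arguments reduce to (b). Where you genuinely diverge is (b): the paper proves its forward direction by a direct contradiction argument from the definition of lower semicontinuity, supposing some $\operatorname*{lev}_{\varphi,>}(t_0)$ fails to be open at a point $x^0$, intersecting neighborhoods, and concluding $x^0\in\operatorname*{cl}(X\setminus\operatorname*{dom}\varphi)$, which contradicts openness of the domain. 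You instead factor the whole proposition through Corollary \ref{c-semi-induced} with $D=\operatorname*{dom}\varphi$, using the preliminary observation that the $\nu$-calculus ($\nu\not> t$) forces $\operatorname*{lev}_{\varphi,>}(t)\subseteq\operatorname*{dom}\varphi$, so that the only issue in every part is translating between openness in the induced topology on $\operatorname*{dom}\varphi$ and openness in $X$. This is a cleaner organization: it avoids repeating work that Corollary \ref{c-semi-induced} already encapsulates, and it makes transparent exactly why openness of the domain is the hypothesis that bridges the two notions. The paper's version of (b) is more self-contained but longer; both are valid.
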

\begin{proof}
\begin{itemize}
\item[]
\item[(a)] results immediately from the definition of lower semicontinuity.
\item[(b)] Assume that $\operatorname*{dom}\varphi$ is open and $\varphi$ is lower semicontinuous on $\operatorname*{dom}\varphi$. 
Propose that there exists some $t_0 \in \mathbb{R}$ for which $\operatorname*{lev}_{\varphi,>}(t_0 )$ is not open.
$\Rightarrow\exists x^0 \in \operatorname*{lev}_{\varphi,>}(t_0 )\;\forall U\in\mathcal{N}(x^0 ):\; U\not\subseteq \operatorname*{lev}_{\varphi,>}(t_0 )$.
Since $\varphi$ is lower semicontinuous on $\operatorname*{dom}\varphi$, there exists some neighborhood V of $x^0$ with $V\subseteq \operatorname*{lev}_{\varphi,>}(t_0 )\cup(X\setminus \operatorname*{dom}\varphi )$.
Consider some arbitrary neighborhood $V_0$ of $x^0$.
$V_1:=V_0\cap V\subseteq \operatorname*{lev}_{\varphi,>}(t_0 )\cup(X\setminus \operatorname*{dom}\varphi)$, but $V_1\not\subseteq \operatorname*{lev}_{\varphi,>}(t_0 ) $.
$\Rightarrow \exists v_1\in V_1:\; v_1\in X\setminus \operatorname*{dom}\varphi.
\Rightarrow \forall V_0\in\mathcal{N}(x^0 )\;\exists v_1\in V_0:\; v_1\in X\setminus \operatorname*{dom}\varphi.
\Rightarrow x^0\in \operatorname*{cl}(X\setminus \operatorname*{dom}\varphi)=X\setminus \operatorname*{dom}\varphi$ since $\operatorname*{dom}\varphi$ is open. This contradicts $x^0 \in \operatorname*{lev}_{\varphi,>}(t_0 )$ and yields the assertion.
\item[(c)]Consider some lower bound $c$ of $\varphi$.
If $\operatorname*{lev}_{\varphi,>}(c-1)$ is open, 
$ A:=X\setminus \operatorname*{lev}_{\varphi,>}(c-1)=\{x\in X \mid \varphi(x) \leq c-1\}\cup(X\setminus \operatorname*{dom}\varphi)=X\setminus \operatorname*{dom}\varphi$ has to be closed
and thus $\operatorname*{dom}\varphi$ is open. This implies (c) because of (b).
\end{itemize}
\end{proof}

Example \ref{ex-extended-func} shows a continuous functional $\varphi$ for which not all sets $\operatorname*{lev}_{\varphi,>}(t)$ are open.

Replacing the function value $\varphi (\frac{\pi}{2})$ in Example \ref{ex-extended-func} by $\nu$, we get a functional $\varphi$ which is continuous on $\operatorname*{dom}\varphi=\{x\in\mathbb{R}
\mid -\frac{\pi}{2}\le x < \frac{\pi}{2}\}$ and for which all sets $\operatorname*{lev}_{\varphi,>}(t)$, $t\in\mathbb{R}$, are open, though $\operatorname*{dom}\varphi$ is not open.

\begin{proposition}\label{prop-semicon-levels-closed}
Let  $\varphi: X\rightarrow \overline{\mathbb{R}}_{\nu }$ be a nontrivial function on a topological space $X$.
\begin{itemize}
\item[(a)]  $\varphi$ is lower semicontinuous on $\operatorname*{dom}\varphi$ if the sublevel sets $\operatorname*{lev}_{\varphi,\le}(t)$ 
are closed for all $t\in \mathbb{R}$. 
\item[(b)] Suppose that $\operatorname*{dom}\varphi$ is closed. Then \\
$\varphi$ is lower semicontinuous if and only if the sublevel sets $\operatorname*{lev}_{\varphi,\le }(t)$ are closed for all $t\in \mathbb{R}$. 
\item[(c)] If $\varphi$ is bounded above, then:\\
The sublevel sets $\operatorname*{lev}_{\varphi,\le}(t)$ 
are closed for all $t\in \mathbb{R}$ if and only if $\operatorname*{dom}\varphi$ is closed and $\varphi$ is lower semicontinuous on $\operatorname*{dom}\varphi$. 
\end{itemize}
\end{proposition}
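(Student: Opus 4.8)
The plan is to mirror the proof of Proposition \ref{prop-semicon-levels-open}, exploiting the duality between strict superlevel sets and sublevel sets and leaning on Corollary \ref{c-semi-induced} to pass between the ambient topology on $X$ and the topology induced on $\operatorname*{dom}\varphi$. The key preliminary observation I would record first is that, since $\nu\not\le t$ for every $t\in\mathbb{R}$, one always has $\operatorname*{lev}_{\varphi,\le}(t)\subseteq\operatorname*{dom}\varphi$; hence $\operatorname*{lev}_{\varphi,\le}(t)=\{x\in\operatorname*{dom}\varphi\mid \varphi(x)\le t\}$ is exactly the sublevel set featured in Corollary \ref{c-semi-induced}(c). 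The whole proof then becomes a careful bookkeeping of ``closed in $X$'' versus ``closed in the induced topology on $\operatorname*{dom}\varphi$''.

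For (a), assume each $\operatorname*{lev}_{\varphi,\le}(t)$ is closed in $X$. Setting $D:=\operatorname*{dom}\varphi$, the inclusion above gives $\operatorname*{lev}_{\varphi,\le}(t)\cap D=\operatorname*{lev}_{\varphi,\le}(t)$, which is therefore closed in the topology induced by $X$ on $D$. By Corollary \ref{c-semi-induced}, $\varphi$ is lower semicontinuous on $\operatorname*{dom}\varphi$. For (b), the ``if'' implication is precisely (a). For the converse I would assume $D$ is closed in $X$ and $\varphi$ lower semicontinuous; Corollary \ref{c-semi-induced} then makes each sublevel set closed in the induced topology, i.e. of the form $C\cap D$ with $C$ closed in $X$, and since $D$ itself is closed in $X$ this intersection is closed in $X$.

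For (c), the direction ``$\operatorname*{dom}\varphi$ closed and $\varphi$ lower semicontinuous $\Rightarrow$ sublevel sets closed'' is just the nontrivial implication of (b). For the reverse direction, (a) immediately yields lower semicontinuity on $\operatorname*{dom}\varphi$, so only the closedness of $\operatorname*{dom}\varphi$ remains, and this is where the boundedness hypothesis enters. Choosing a real upper bound $c$ of $\varphi$, the relation $\not>$ coincides with $\le$ on $\overline{\mathbb{R}}$, so $\varphi(x)\le c$ for every $x\in\operatorname*{dom}\varphi$, while $\varphi(x)=\nu\not\le c$ off the domain; hence $\operatorname*{lev}_{\varphi,\le}(c)=\operatorname*{dom}\varphi$. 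As this sublevel set is closed by hypothesis, $\operatorname*{dom}\varphi$ is closed.

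The main obstacle is not any single computation but keeping the two notions of closedness straight: the content of (a) and the ``only if'' part of (b) is really the transfer between closed-in-$D$ and closed-in-$X$, which works in one direction unconditionally and in the other only because $D$ is closed. The genuinely new ingredient over (a)--(b) is the observation in (c) that boundedness above lets $\operatorname*{dom}\varphi$ be realized as the single sublevel set $\operatorname*{lev}_{\varphi,\le}(c)$ — the exact analogue of the $\operatorname*{lev}_{\varphi,>}(c-1)$ trick used for the open case in Proposition \ref{prop-semicon-levels-open}(c).
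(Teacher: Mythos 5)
Your proposal is correct and follows essentially the same route as the paper's proof: the logical skeleton (unconditional direction in (a), domain-closedness bridging the converse in (b), and part (c) hinging on realizing $\operatorname*{dom}\varphi$ as the single sublevel set at a real upper bound) is identical. The only difference is packaging: you channel (a) and (b) through Corollary \ref{c-semi-induced} and the transfer between closedness in $X$ and closedness in the subspace topology on $\operatorname*{dom}\varphi$, whereas the paper argues directly with the complements $X\setminus\operatorname*{lev}_{\varphi,\le}(t)=\operatorname*{lev}_{\varphi,>}(t)\cup(X\setminus\operatorname*{dom}\varphi)$ and the definition of lower semicontinuity.
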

\begin{proof}
\begin{itemize}
\item[]
\item[(a)] $\operatorname*{lev}_{\varphi,\le}(t)$ is closed if and only if $A(t):=X\setminus \operatorname*{lev}_{\varphi,\le}(t)=\operatorname*{lev}_{\varphi,>}(t)\cup(X\setminus \operatorname*{dom}\varphi )$ is open. If $A(t)$ is open for all $t\in\mathbb{R}$, the lower semicontinuity of the functional on $\operatorname*{dom}\varphi$ follows from the definition of this property.
\item[(b)] The definition of lower semicontinuity of $\varphi$ implies that the set $A(t)$ is open for each $t\in\mathbb{R}$ for which $\operatorname*{lev}_{\varphi,>}(t)$ is not empty.
Thus the set $A(t)$ is open for each $t\in\mathbb{R}$ if $\operatorname*{dom}\varphi$ is closed. This yields assertion (b).
\item[(c)]Consider some upper bound $b$ of $\varphi$.
If $\operatorname*{lev}_{\varphi,\le}(b)$ is closed, 
$ A:=X\setminus \operatorname*{lev}_{\varphi,\le}(b)=\{x\in X \mid \varphi(x) > b\}\cup(X\setminus \operatorname*{dom}\varphi)=X\setminus \operatorname*{dom}\varphi$ has to be open and
thus $\operatorname*{dom}\varphi$ is closed. This implies (c) because of (b).
\end{itemize}
\end{proof}

A function can be lower semicontinuous on its domain though not all sublevel sets $\operatorname*{lev}_{\varphi,\le }(t)$ are closed.
\begin{example}\label{notlsc}
Consider the functional $\varphi : \mathbb{R}\to \overline{\mathbb{R}}_{\nu}$ given by
\[ \varphi(x)=\left\{
\begin{array}{r@{\quad\mbox{ if }\quad}l}
x & x>0,\\
\nu & x\leq 0.
\end{array}
\right.
\]
$\varphi$ is continuous on $\operatorname*{dom}\varphi$, but $\operatorname*{lev}_{\varphi,\le }(t)$ is not closed if $t>0$.
\end{example}

In Example \ref{ex-betrag}, the sublevel sets $\operatorname*{lev}_{\varphi,\le}(t)$ are closed for all $t\in\mathbb{R}$, though $\varphi$ is continuous on $\operatorname*{dom}\varphi$ and
$\operatorname*{dom}\varphi$ is not closed. 

For upper semicontinuity, analogous statements as in the previous two propositions follow with reverse relations and reverse directions of boundedness from Proposition \ref{p-usc-lsc}.

Let us now investigate the connection between lower semicontinuity and closedness of the epigraph.
\begin{lemma}\label{l-epi-level}
Let $X$ be a topological space, $\varphi: X\rightarrow \overline{\mathbb{R}}_{\nu }$.\\
$ \operatorname*{epi}\varphi$ is closed in $X \times \mathbb{R}$ if and only if the sublevel sets $\operatorname*{lev}_{\varphi,\le}(t)= \{x\in X \mid \varphi (x)\le t\}$ 
are closed for all $t\in \mathbb{R}$. 
\end{lemma}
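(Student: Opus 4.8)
The plan is to prove the two implications separately, and the governing observation throughout is that a point $x$ with $\varphi(x)=\nu$ satisfies $\varphi(x)\not\le t$ for every $t\in\mathbb{R}$ (by the rules for $\nu$ in Section~\ref{sec-def-ext}), and likewise $+\infty\not\le t$ for every $t\in\mathbb{R}$. Hence points outside $\operatorname*{dom}\varphi$, as well as points where $\varphi$ takes the value $+\infty$, lie neither in $\operatorname*{epi}\varphi$ at any height nor in any sublevel set $\operatorname*{lev}_{\varphi,\le}(t)$; for the purposes of this statement they behave exactly like finite points where $\varphi(x)$ is ``too large''. This uniform behaviour is what makes the classical argument go through in the presence of $\nu$.

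For the forward direction (epigraph closed $\Rightarrow$ all sublevel sets closed) I would fix $t\in\mathbb{R}$ and use the insertion map $\iota_t\colon X\to X\times\mathbb{R}$, $\iota_t(x)=(x,t)$, which is continuous. Since $(x,t)\in\operatorname*{epi}\varphi$ holds precisely when $\varphi(x)\le t$, we have $\operatorname*{lev}_{\varphi,\le}(t)=\iota_t^{-1}(\operatorname*{epi}\varphi)$, and the preimage of a closed set under a continuous map is closed. No case distinction on the value of $\varphi$ is needed here.

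For the converse (all sublevel sets closed $\Rightarrow$ epigraph closed) I would show that the complement of $\operatorname*{epi}\varphi$ in $X\times\mathbb{R}$ is open. Take $(x_0,t_0)\notin\operatorname*{epi}\varphi$, i.e. $\varphi(x_0)\not\le t_0$. The crucial step is to select a real separating level $s$ with $t_0<s$ and $\varphi(x_0)\not\le s$: if $\varphi(x_0)\in\{\nu,+\infty\}$ then any $s>t_0$ works, and if $\varphi(x_0)\in\mathbb{R}$ (in which case $\varphi(x_0)>t_0$) any $s\in(t_0,\varphi(x_0))$ works; the value $\varphi(x_0)=-\infty$ is impossible, since then $\varphi(x_0)\le t_0$ and $(x_0,t_0)\in\operatorname*{epi}\varphi$. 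Then $x_0\notin\operatorname*{lev}_{\varphi,\le}(s)$, a closed set by hypothesis, so $U:=X\setminus\operatorname*{lev}_{\varphi,\le}(s)$ is an open neighbourhood of $x_0$ on which $\varphi(x)\not\le s$ throughout. I would conclude by checking that $U\times(-\infty,s)$ is an open neighbourhood of $(x_0,t_0)$ disjoint from $\operatorname*{epi}\varphi$: if some $(x,t)$ in it lay in $\operatorname*{epi}\varphi$, then $\varphi(x)\le t<s$ would give $\varphi(x)\le s$, contradicting $\varphi(x)\not\le s$ for $x\in U$.

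The main obstacle, and really the only delicate point, is the coexistence of the symbol $\nu$ with the genuine value $+\infty$ in the effective domain. I expect the unified choice of the separating level $s$ in the converse to be the crux of the argument: it is exactly what lets points with $\varphi(x_0)=\nu$ and points with $\varphi(x_0)=+\infty$ be handled on the same footing as finite points with $\varphi(x_0)>t_0$, and it is where the non-classical relations $\nu\not\le t$ and $+\infty\not\le t$ from Section~\ref{sec-def-ext} must be invoked, rather than the familiar equivalence of $\not\le$ with $>$ that fails on $\overline{\mathbb{R}}_{\nu}$.
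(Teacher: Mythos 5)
Your proof is correct. The forward direction coincides with the paper's: the paper passes to the complement $(X\times\mathbb{R})\setminus\operatorname*{epi}\varphi$ and observes that its horizontal slice at height $t$ is open, which is exactly your preimage of $\operatorname*{epi}\varphi$ under the continuous insertion map $\iota_t$. For the converse the two arguments diverge in technique, though not in substance. The paper argues by contradiction: assuming the complement of the epigraph fails to be open at some $(x,\lambda)$, it produces, for every $\epsilon>0$, points of $\operatorname*{lev}_{\varphi,\le}(\lambda+\epsilon)$ in every neighborhood of $x$, invokes the closedness of that sublevel set to conclude $\varphi(x)\le\lambda+\epsilon$, and lets $\epsilon\to0$ to contradict $(x,\lambda)\notin\operatorname*{epi}\varphi$. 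You instead give a direct construction: a single separating level $s>t_0$ with $\varphi(x_0)\not\le s$ (chosen in $(t_0,\varphi(x_0))$ when $\varphi(x_0)$ is real, arbitrarily otherwise), yielding the explicit basic open neighborhood $\bigl(X\setminus\operatorname*{lev}_{\varphi,\le}(s)\bigr)\times(-\infty,s)$ of $(x_0,t_0)$ disjoint from the epigraph. Your version uses only one sublevel set per point and avoids the limiting step; the paper's version avoids your case distinction on the value of $\varphi(x_0)$, since the contradiction argument never needs to name a separating level. Both treat $\nu$ and $+\infty$ correctly through the relation $\not\le$ rather than $>$, which is indeed the only delicate point, and you flag it accurately.
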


\begin{proof}
\begin{itemize}
\item[]
\item[(i)] $ \operatorname*{epi}\varphi=\{(x,t)\in X \times \mathbb{R} \mid \varphi(x)\le t\}$ is closed iff
$ (X\times \mathbb{R} )\setminus \operatorname*{epi}\varphi=\{(x,t)\in X \times \mathbb{R} \mid \varphi(x) > t\}\cup((X\setminus \operatorname*{dom}\varphi) \times \mathbb{R})$ is open.
Then for each $t\in \mathbb{R}$, $\{x\in X \mid \varphi(x) > t\}\cup(X\setminus \operatorname*{dom}\varphi)$ is open and thus $\{x\in X \mid \varphi(x) \le t\}$ is closed.
\item[(ii)] Suppose that $\operatorname*{lev}_{\varphi,\le}(t)$ is closed for each $t\in \mathbb{R}$.
Assume that $ \operatorname*{epi}\varphi$ is not closed. Then $ (X\times \mathbb{R} )\setminus \operatorname*{epi}\varphi$ is not open.
$\Rightarrow \exists (x,\lambda )\in (X\times \mathbb{R} )\setminus \operatorname*{epi}\varphi\;\forall V\in\mathcal{N}((x,\lambda ))\;\exists (x^{V},t_{V})\in V:\; 
(x^{V},t_{V})\in\operatorname*{epi}\varphi$ , i.e. $\varphi (x^{V})\leq t_{V}$.
Consider an arbitrary $\epsilon >0$, $U_{\epsilon }(\lambda ):=\{r\in \mathbb{R}\mid \lambda - \epsilon < r < \lambda + \epsilon\}$ and an arbitrary $U\in\mathcal{N}(x)$.
$\Rightarrow V_{\epsilon }:=U\times U_{\epsilon }(\lambda )\in\mathcal{N}((x,\lambda )).
\Rightarrow \exists (x^{\epsilon },t_{\epsilon })\in V_{\epsilon }:\; 
\varphi (x^{\epsilon })\leq t_{\epsilon } < \lambda +\epsilon . 
\Rightarrow \forall U\in\mathcal{N}(x)\;\exists x^{\epsilon } \in U:\; x^{\epsilon } \in \operatorname*{lev}_{\varphi,\le}(\lambda +\epsilon ).
\Rightarrow x\in\operatorname*{cl}(\operatorname*{lev}_{\varphi,\le}(\lambda +\epsilon ))=\operatorname*{lev}_{\varphi,\le}(\lambda +\epsilon ).
\Rightarrow \varphi (x)\leq\lambda +\epsilon \;\forall\epsilon >0.
\Rightarrow \varphi (x)\leq\lambda$, a contradiction to $(x,\lambda )\notin \operatorname*{epi}\varphi$. Thus $ \operatorname*{epi}\varphi$ is closed.
\end{itemize}
\end{proof}

Corollary \ref{hypo-closed} results in an analogous statement for $ \operatorname*{hypo}\varphi$ and the superlevel sets $\operatorname*{lev}_{\varphi,\ge}(t)$.

Proposition \ref{prop-semicon-levels-closed} implies because of Lemma \ref{l-epi-level}:
\begin{proposition}\label{p-semi-epi}
Let  $\varphi: X\rightarrow \overline{\mathbb{R}}_{\nu }$ be a nontrivial function on a topological space $X$.
\begin{itemize}
\item[(a)] $\varphi$ is lower semicontinuous on $\operatorname*{dom}\varphi$ if
$ \operatorname*{epi}\varphi$ is closed in $X\times \mathbb{R}$. 
\item[(b)] If $\operatorname*{dom}\varphi$ is closed, then \\
$\varphi$ is lower semicontinuous $\iff$ $ \operatorname*{epi}\varphi$ is closed in $X\times \mathbb{R}$. 
\item[(c)] If $\varphi$ is bounded above, then:\\
$ \operatorname*{epi}\varphi$ is closed in $X\times \mathbb{R}$ if and only if $\operatorname*{dom}\varphi$ is closed and $\varphi$ is lower semicontinuous on $\operatorname*{dom}\varphi$. 
\end{itemize}
\end{proposition}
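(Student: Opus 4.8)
The plan is to obtain all three parts as immediate translations of Proposition \ref{prop-semicon-levels-closed}, trading the condition on sublevel sets for the condition on the epigraph. The only tool needed beyond that proposition is Lemma \ref{l-epi-level}, which supplies the key equivalence
\[
\operatorname*{epi}\varphi \text{ is closed in } X\times\mathbb{R}
\quad\Longleftrightarrow\quad
\operatorname*{lev}_{\varphi,\le}(t) \text{ is closed for all } t\in\mathbb{R}.
\]
I would record this equivalence first and then invoke it in each of the three arguments.

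For part (a), Proposition \ref{prop-semicon-levels-closed}(a) already guarantees that $\varphi$ is lower semicontinuous on $\operatorname*{dom}\varphi$ whenever all sublevel sets are closed. Assuming $\operatorname*{epi}\varphi$ is closed, the equivalence above yields closedness of every $\operatorname*{lev}_{\varphi,\le}(t)$, and the conclusion follows at once. For part (b), under the hypothesis that $\operatorname*{dom}\varphi$ is closed, Proposition \ref{prop-semicon-levels-closed}(b) gives the biconditional between lower semicontinuity of $\varphi$ and closedness of all sublevel sets; substituting the equivalence from Lemma \ref{l-epi-level} on the sublevel-set side replaces ``all sublevel sets closed'' by ``$\operatorname*{epi}\varphi$ closed'', which is exactly the claimed biconditional.

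Part (c) is handled in the same fashion: when $\varphi$ is bounded above, Proposition \ref{prop-semicon-levels-closed}(c) asserts that closedness of all sublevel sets is equivalent to the conjunction ``$\operatorname*{dom}\varphi$ closed and $\varphi$ lower semicontinuous on $\operatorname*{dom}\varphi$''; replacing the sublevel-set condition by closedness of $\operatorname*{epi}\varphi$ through Lemma \ref{l-epi-level} produces the stated equivalence. I do not expect a genuine obstacle here, since the whole argument is a formal substitution; the only point requiring a moment's care is to confirm that the hypotheses attached to each part of Proposition \ref{prop-semicon-levels-closed} (namely $\operatorname*{dom}\varphi$ closed in (b) and $\varphi$ bounded above in (c)) coincide verbatim with the hypotheses of the corresponding part here, so that no extra condition is silently required when one passes from the sublevel sets to the epigraph.
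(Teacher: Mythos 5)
Your proposal is correct and is exactly the paper's argument: the paper derives Proposition \ref{p-semi-epi} in one line by combining Proposition \ref{prop-semicon-levels-closed} with the equivalence of Lemma \ref{l-epi-level}, just as you do. Nothing further is needed.
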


The function $\varphi$ in Example \ref{notlsc} is lower semicontinuous on its domain though $\operatorname*{epi}(\varphi)$ is not closed.
In Example \ref{ex-betrag}, 
$ \operatorname*{epi}\varphi$ is closed in $X\times \mathbb{R}$, though $\varphi$ is continuous on $\operatorname*{dom}\varphi$ and $\operatorname*{dom}\varphi$ is not closed.

For upper semicontinuity, we can prove an analogous statement as in Proposition \ref{p-semi-epi} by replacing the epigraph by the hypograph and changing the direction of boundedness.

Corollary \ref{c-semi-induced} and Lemma \ref{l-epi-level} imply:
\begin{corollary}\label{lsc-classic}
Let $X$ be a topological space, $\varphi: X\rightarrow \overline{\mathbb{R}}$. \\
Then the following statements are equivalent:
\begin{itemize}
\item[(a)] $\varphi$ is lower semicontinuous.
\item[(b)] The sets $\operatorname*{lev}_{\varphi,>}(t)$ 
are open for all $t\in \mathbb{R}$.
\item[(c)] The sublevel sets $\operatorname*{lev}_{\varphi,\le}(t)$ 
are closed for all $t\in \mathbb{R}$. 
\item[(d)] $ \operatorname*{epi}\varphi$ is closed in $X\times \mathbb{R}$. 
\end{itemize}
\end{corollary}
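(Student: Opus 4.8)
The plan is to reduce everything to the two results cited immediately before the statement, exploiting the fact that a function $\varphi : X \to \overline{\mathbb{R}}$ never takes the symbolic value $\nu$. First I would record the key observation: since $\varphi(x) \in \overline{\mathbb{R}}$ for every $x \in X$, we have $\operatorname*{dom}\varphi = X$, which is nonempty (as $X$ is a nonempty topological space) and, being the whole space, is simultaneously open and closed. In particular $\varphi$ is nontrivial, and by Definition \ref{dsemicon} the unqualified phrase ``$\varphi$ is lower semicontinuous'' means here precisely that $\varphi$ is lower semicontinuous on $\operatorname*{dom}\varphi = X$, because $\operatorname*{dom}\varphi$ is nonempty and closed.

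Next I would apply Corollary \ref{c-semi-induced} with the choice $D := \operatorname*{dom}\varphi = X$. Since the topology induced by $\tau$ on $D = X$ is $\tau$ itself, the qualifier ``open (resp.\ closed) w.r.t.\ the topology induced on $D$'' coincides with ``open (resp.\ closed) in $X$''. Thus the three equivalent conditions of that corollary collapse onto exactly statements (a), (b) and (c) of the present corollary, yielding $(a)\Leftrightarrow(b)\Leftrightarrow(c)$ with no additional argument.

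Finally, the equivalence $(c)\Leftrightarrow(d)$ is immediate from Lemma \ref{l-epi-level}, which asserts that $\operatorname*{epi}\varphi$ is closed in $X\times\mathbb{R}$ if and only if all sublevel sets $\operatorname*{lev}_{\varphi,\le}(t)$, $t\in\mathbb{R}$, are closed. Chaining the two blocks of equivalences then establishes the equivalence of (a)--(d).

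The only point demanding care --- and hence the main (though minor) obstacle --- is the bookkeeping around the definitions rather than any genuine analytic content: one must verify that the topology induced on $D = X$ really is $\tau$, so that the ``induced'' qualifiers in Corollary \ref{c-semi-induced} may be dropped, and that the global notion of lower semicontinuity from Definition \ref{dsemicon} coincides in this setting with lower semicontinuity on $\operatorname*{dom}\varphi$. Both identifications rest on the single fact $\operatorname*{dom}\varphi = X$ being nonempty and clopen; once they are made, no $\nu$-values ever intervene and the proof is a direct citation of the two preceding results.
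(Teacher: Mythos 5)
Your proof is correct and follows exactly the route the paper takes: the paper derives this corollary precisely by citing Corollary \ref{c-semi-induced} (applied with $D=\operatorname*{dom}\varphi=X$, giving (a)$\Leftrightarrow$(b)$\Leftrightarrow$(c)) together with Lemma \ref{l-epi-level} (giving (c)$\Leftrightarrow$(d)). Your additional bookkeeping about $\operatorname*{dom}\varphi=X$ being clopen and the induced topology on $D=X$ coinciding with $\tau$ is exactly the implicit justification the paper leaves to the reader.
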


\begin{remark}
The characterization of lower semicontinuity in Corollary \ref{lsc-classic} describes the ways in which lower semicontinuity is defined in the classical framework which works with $+\infty$ instead of $\nu$. Rockafellar \cite[Theorem 7.1]{Roc97} proved the equivalence between these conditions for $\varphi: \mathbb{R}^{n}\rightarrow \overline{\mathbb{R}}$.
Moreau \cite{Mor67} defined lower semicontinuity by property (c) and stated the equi\-valence with property (d) for $X$ being a topological space.
\end{remark}
\smallskip

\section{Convexity}\label{s-cx}

We will now study convexity of extended real-valued functions.
\begin{definition}\label{def-conv-allg}
Let $X$ be a linear space and $\varphi: X\rightarrow \overline{\mathbb{R}}_{\nu }$. \\
$\varphi$ is said to be \textbf{convex} if $\operatorname*{dom}\varphi$ and $\operatorname*{epi}\varphi$ are convex sets. \\
$\varphi$ is \textbf{concave} if  $\operatorname*{dom}\varphi$ and $\operatorname*{hypo}\varphi$ are convex sets. 
\end{definition}

\begin{remark}
The characterization of a real-valued convex function on some finite-dimensional vector space by the epigraph of the function was given by Fenchel \cite[p. 57]{Fen53}.
\end{remark}

If $\varphi: X \to \overline{\mathbb{R}}_{\nu } $ does not attain the value $+\infty$, then convexity of $\operatorname*{epi}\varphi$ implies convexity of $\operatorname*{dom}\varphi$.

A continuous functional with a convex epigraph and a convex hypograph does not necessarily have a convex domain.
\begin{example}
Define $\varphi : \mathbb{R}\to \overline{\mathbb{R}}_{\nu}$ by
\[ \varphi(x)=\left\{
\begin{array}{r@{\quad\mbox{ if }\quad}l}
-\infty & x=-1,\\
+\infty & x=1,  \\
\nu & x\in \mathbb{R}\setminus \{ -1,1\} .
\end{array}
\right.
\] 
Then $\operatorname*{dom}\varphi =\{ -1,1\}$ is closed, but not convex. $\varphi$ is continuous.
$\operatorname*{epi}\varphi =\{ -1\}\times \mathbb{R}$ and $\operatorname*{hypo}\varphi =\{ 1\}\times \mathbb{R}$ are closed convex sets.
\end{example}

Lemma \ref{l-epihypo} implies:
\begin{lemma}\label{l-cx-cv}
Let $X$ be a linear space and $\varphi: X\rightarrow \overline{\mathbb{R}}_{\nu }$. \\
Then $\varphi$ is concave on $X$ if and only if $-\varphi$ is convex on $X$.
\end{lemma}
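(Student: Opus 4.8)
The plan is to reduce everything to the two defining conditions in Definition~\ref{def-conv-allg} and to dispatch them one at a time. Since $\varphi$ is concave exactly when $\operatorname*{dom}\varphi$ and $\operatorname*{hypo}\varphi$ are convex, while $-\varphi$ is convex exactly when $\operatorname*{dom}(-\varphi)$ and $\operatorname*{epi}(-\varphi)$ are convex, it suffices to establish two facts: that the two domains coincide, and that $\operatorname*{hypo}\varphi$ is convex if and only if $\operatorname*{epi}(-\varphi)$ is convex.

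First I would verify $\operatorname*{dom}\varphi=\operatorname*{dom}(-\varphi)$. This is immediate from the arithmetic rules of Section~\ref{sec-def-ext}: negation maps $\mathbb{R}$ into $\mathbb{R}$, interchanges $+\infty$ and $-\infty$, and satisfies $-\nu=\nu$. Hence $\varphi(x)\in\overline{\mathbb{R}}$ if and only if $(-\varphi)(x)=-\varphi(x)\in\overline{\mathbb{R}}$, so the two effective domains are literally the same subset of $X$; in particular one is convex precisely when the other is.

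The second fact is exactly part~(b) of the corollary following Lemma~\ref{l-epihypo}, which already records that $\operatorname*{hypo}\varphi$ is convex if and only if $\operatorname*{epi}(-\varphi)$ is convex. If one prefers to argue directly, Lemma~\ref{l-epihypo}(a), applied to the functional $-\varphi$ together with the involution $-(-\varphi)=\varphi$, gives that $(x,t)\in\operatorname*{epi}(-\varphi)$ if and only if $(x,-t)\in\operatorname*{hypo}\varphi$. Thus $\operatorname*{epi}(-\varphi)$ is the image of $\operatorname*{hypo}\varphi$ under the linear automorphism $(x,t)\mapsto(x,-t)$ of $X\times\mathbb{R}$; since a bijective linear map sends convex sets to convex sets and this one is its own inverse, convexity of either set is equivalent to convexity of the other.

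Combining the two facts finishes the proof: $\varphi$ is concave $\iff$ ($\operatorname*{dom}\varphi$ convex and $\operatorname*{hypo}\varphi$ convex) $\iff$ ($\operatorname*{dom}(-\varphi)$ convex and $\operatorname*{epi}(-\varphi)$ convex) $\iff$ $-\varphi$ is convex. I do not anticipate a genuine obstacle; the only point needing care is the bookkeeping that negation acts as an involution on $\overline{\mathbb{R}}_{\nu}$ and fixes $\nu$, which is precisely what guarantees both the domain identity and that the coordinate reflection is a self-inverse linear bijection.
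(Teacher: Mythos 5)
Your proof is correct and follows essentially the same route as the paper, which derives the lemma directly from Lemma~\ref{l-epihypo} and its corollary (part~(b): $\operatorname*{hypo}\varphi$ is convex $\iff$ $\operatorname*{epi}(-\varphi)$ is convex). Your write-up is merely more explicit, usefully recording the detail the paper leaves implicit, namely that $\operatorname*{dom}\varphi=\operatorname*{dom}(-\varphi)$ because negation is an involution on $\overline{\mathbb{R}}_{\nu}$ fixing $\nu$.
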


Definition \ref{def-conv-allg} is compatible with the usual definition of convex real-valued functions.

\begin{theorem}\label{tconv}
Let $X$ be a linear space and $\varphi: X \to \overline{\mathbb{R}}_{\nu } $ be proper. \\
Then $\varphi$ is convex or concave if and only if  $\operatorname*{dom} \varphi$ is convex and
\begin{eqnarray}
\varphi(\lambda x^1+(1-\lambda)x^2)\leq \lambda \varphi(x^1)+(1-\lambda) \varphi(x^2) & \mbox{ or }\label{eq-cx} \\
\varphi(\lambda x^1+(1-\lambda)x^2)\geq \lambda \varphi(x^1)+(1-\lambda) \varphi(x^2), & \mbox{respectively},\nonumber
\end{eqnarray}
holds for all $x^1, x^2\in \operatorname*{dom} \varphi$, $\lambda\in(0,1)$.
\end{theorem}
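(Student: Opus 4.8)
The plan is to reduce this biconditional, whose two sides are each disjunctions, to two ``atomic'' equivalences and then recombine them by distributing the conjunction over the disjunction. Concretely, I would first establish that $\varphi$ is convex if and only if $\operatorname*{dom}\varphi$ is convex and (\ref{eq-cx}) holds, and separately that $\varphi$ is concave if and only if $\operatorname*{dom}\varphi$ is convex and the reverse inequality holds. Since both atomic statements carry the same conjunct ``$\operatorname*{dom}\varphi$ convex'', the disjunction of the two right-hand sides is logically equivalent to ``$\operatorname*{dom}\varphi$ convex and [(\ref{eq-cx}) or its reverse]'', which is exactly the assertion. The standing assumption that $\varphi$ is \emph{proper} is what makes this work: on $\operatorname*{dom}\varphi$ every value lies in $\mathbb{R}$, so all the arithmetic I need is ordinary real arithmetic and the $\pm\infty$ pathologies of Examples~\ref{ex-extended-real} and \ref{ex-extended-inequ} never arise.

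For the convex equivalence I would argue both directions directly from Definition~\ref{def-conv-allg}. Assume $\varphi$ is convex, so $\operatorname*{dom}\varphi$ and $\operatorname*{epi}\varphi$ are convex. Given $x^1,x^2\in\operatorname*{dom}\varphi$ and $\lambda\in(0,1)$, properness gives $\varphi(x^1),\varphi(x^2)\in\mathbb{R}$, so $(x^1,\varphi(x^1))$ and $(x^2,\varphi(x^2))$ are genuine points of $X\times\mathbb{R}$ lying in $\operatorname*{epi}\varphi$; convexity of $\operatorname*{epi}\varphi$ places $(\lambda x^1+(1-\lambda)x^2,\ \lambda\varphi(x^1)+(1-\lambda)\varphi(x^2))$ in $\operatorname*{epi}\varphi$, and reading off the defining inequality of the epigraph yields (\ref{eq-cx}). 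Conversely, assume $\operatorname*{dom}\varphi$ convex and (\ref{eq-cx}). To see $\operatorname*{epi}\varphi$ convex, take $(x^1,t_1),(x^2,t_2)\in\operatorname*{epi}\varphi$; then $\varphi(x^i)\le t_i$ with $t_i\in\mathbb{R}$, and by the rule $\nu\not\le y$ this forces $x^i\in\operatorname*{dom}\varphi$, hence $\varphi(x^i)\in\mathbb{R}$ by properness. Convexity of $\operatorname*{dom}\varphi$ puts the combination in the domain, and (\ref{eq-cx}) together with monotonicity of real addition and scaling gives $\varphi(\lambda x^1+(1-\lambda)x^2)\le\lambda\varphi(x^1)+(1-\lambda)\varphi(x^2)\le\lambda t_1+(1-\lambda)t_2$, so the convex combination of the two epigraph points lies in $\operatorname*{epi}\varphi$.

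The concave equivalence I would then obtain without repeating the work, by applying the convex case to $-\varphi$. Lemma~\ref{l-cx-cv} gives that $\varphi$ is concave iff $-\varphi$ is convex, and $-\varphi$ is again proper with $\operatorname*{dom}(-\varphi)=\operatorname*{dom}\varphi$ (since $-\nu=\nu$ and negation preserves $\overline{\mathbb{R}}$). Applying the convex equivalence to $-\varphi$ and multiplying the resulting inequality by $-1$, which is harmless because all values in question are real, converts the ``$\le$'' statement for $-\varphi$ into the ``$\ge$'' statement for $\varphi$, giving the concave atomic equivalence.

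The only delicate points, and hence where I expect the real care to be needed, are the bookkeeping with $\nu$ and the restriction $t\in\mathbb{R}$ built into $\operatorname*{epi}\varphi$: one must check that membership $(x,t)\in\operatorname*{epi}\varphi$ genuinely forces $x\in\operatorname*{dom}\varphi$ (via $\nu\not\le t$) and that the plugged-in points have finite second coordinate (via properness) so that they are points of $X\times\mathbb{R}$ at all. These are precisely the places where an improper $\varphi$ attaining $\pm\infty$ on its domain would break the argument, which explains the properness hypothesis; apart from this verification, every step is routine real-number monotonicity.
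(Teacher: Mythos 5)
Your proposal is correct and follows essentially the same route as the paper: prove the convex equivalence directly from Definition~\ref{def-conv-allg} in both directions (using properness so that $(x^i,\varphi(x^i))$ are genuine points of $X\times\mathbb{R}$, and the epigraph membership $\varphi(x^i)\le t_i$ to place $x^i$ in $\operatorname*{dom}\varphi$), then transfer to the concave case via Lemma~\ref{l-cx-cv} applied to $-\varphi$. Your explicit bookkeeping of where $\nu$ and properness enter is slightly more detailed than the paper's write-up but the argument is the same.
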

\begin{proof} 
\begin{itemize}
\item[]
\item[(a)] If $\varphi$ is convex, then $\operatorname*{epi}  \; \varphi$ is a convex set. Consider arbitrary elements $x^1, x^2\in \operatorname*{dom} \varphi$, $\lambda\in(0,1)$.
Because of $(x^1,\varphi (x^1)), (x^2,\varphi (x^2))\in\operatorname*{epi}\varphi$, we get: $\lambda \cdot (x^1,\varphi (x^1))+(1-\lambda) \cdot (x^2,\varphi (x^2))\in \operatorname*{epi}\varphi$.
Hence $\varphi(\lambda x^1+(1-\lambda)x^2)\leq \lambda \varphi(x^1)+(1-\lambda) \varphi(x^2)$.
\item[(b)] Assume that $\operatorname*{dom} \varphi$ is convex and
$ \varphi(\lambda
 x^1+(1-\lambda)x^2)\leq \lambda \varphi(x^1)+(1-\lambda) \varphi(x^2)$
for all $x^1, x^2\in \operatorname*{dom} \varphi$, $\lambda\in(0,1)$.
Consider $(x^1,t_1), (x^2,t_2)\in\operatorname*{epi}\varphi$, i.e. $x^1, x^2\in\operatorname*{dom} \varphi$ with $\varphi(x^1)\le t_1, \varphi(x^2)\le t_2$.
Then $\lambda  x^1+(1-\lambda)x^2\in\operatorname*{dom} \varphi$ and $ \varphi(\lambda
 x^1+(1-\lambda)x^2)\le \lambda \varphi(x^1)+(1-\lambda) \varphi(x^2)\le \lambda t_1+(1-\lambda) t_2$
for all $\lambda\in(0,1)$. Hence $\lambda \cdot (x^1,t_1)+(1-\lambda) \cdot (x^2,t_2)\in \operatorname*{epi}  \; \varphi$ for all $\lambda\in(0,1)$, i.e. 
$\operatorname*{epi}  \; \varphi$ is convex.
\end{itemize}
Thus the assertion holds for convex functionals. This, together with Lemma \ref{l-cx-cv}, implies the statement for the concave functional.
\end{proof}

Let us now characterize improper convex functionals. \\
A continuous convex functional can attain the value $+\infty$.

\begin{example}
Let $\varphi : \mathbb{R}\to \overline{\mathbb{R}}_{\nu}$ be given by
\[ \varphi(x)=\left\{
\begin{array}{r@{\quad\mbox{ if }\quad}l}
\nu & x <0,\\
+\infty & x=0,\\
\frac{1}{x} & x>0.
\end{array}
\right.
\]
$\varphi$ is a continuous convex functional.
\end{example}

\begin{proposition}\label{p-conv-infty}
Let  $\varphi: X\rightarrow \overline{\mathbb{R}}_{\nu }$ be a convex function on a linear space $X$.
\begin{itemize}
\item[(a)] $\operatorname*{dom}_{-}\varphi :=\{x\in \operatorname*{dom}\varphi\mid\varphi (x)\not= +\infty\}$ is convex.
\item[(b)] If $\varphi (x^0)= -\infty$ for some $x^0\in X$, then $\varphi (\lambda x^0+(1-\lambda) x)=-\infty$ for all $x\in \operatorname*{dom}_{-}\varphi , \lambda\in (0,1)$. 
\item[(c)]\label{p-conv-proper}
If there exists some $x^0 \in X$ such that $\varphi (x^0) = - \infty $, then $\varphi(x) = - \infty$ holds for all $x\in\operatorname*{icr}(\operatorname*{dom}_{-}\varphi)$.
\item[(d)]\label{cx_odd_infty}
If $\{x,-x\}\subset\operatorname*{dom}\varphi$, $\varphi(x)=-\infty$ and $\varphi(0)\not=-\infty$, then $\varphi(-\lambda x)=+\infty$ for all $\lambda\in (0,1]$.
\item[(e)]\label{p-Rockproperf}
Assume that $X$ is a topological vector space, that $\varphi$ is lower semicontinuous on $\operatorname*{dom}_{-}\varphi$ and attains the value $-\infty$. Then $\varphi$ has no finite values.
\end{itemize}
\end{proposition}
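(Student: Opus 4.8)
The plan is to argue by contradiction: assume $\varphi$ does attain a finite value, and then derive a violation of lower semicontinuity at that point. So suppose there is some $\bar{x}\in X$ with $c:=\varphi(\bar{x})\in\mathbb{R}$, and let $x^0\in X$ be a point with $\varphi(x^0)=-\infty$, which exists by hypothesis. Both points lie in $\operatorname*{dom}_{-}\varphi$, since a finite value and the value $-\infty$ are each different from $+\infty$ and belong to $\overline{\mathbb{R}}$.

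The key structural input is part (b) of this proposition. Because $\varphi(x^0)=-\infty$ and $\bar{x}\in\operatorname*{dom}_{-}\varphi$, it yields $\varphi(\lambda x^0+(1-\lambda)\bar{x})=-\infty$ for every $\lambda\in(0,1)$. Thus the value $-\infty$ is attained all along the open segment joining $\bar{x}$ to $x^0$, and every such point again lies in $\operatorname*{dom}_{-}\varphi$. I would then observe that, since $X$ is a topological vector space, the map $\lambda\mapsto\lambda x^0+(1-\lambda)\bar{x}$ is continuous and takes the value $\bar{x}$ at $\lambda=0$; hence these segment points can be made to lie in any prescribed neighborhood of $\bar{x}$ by taking $\lambda>0$ small enough.

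Finally I would invoke lower semicontinuity of $\varphi$ at $\bar{x}$ relative to $\operatorname*{dom}_{-}\varphi$. Applying the definition with the real number $h:=c-1<c=\varphi(\bar{x})$ produces a neighborhood $U$ of $\bar{x}$ with $\varphi(x)>c-1$ for all $x\in U\cap\operatorname*{dom}_{-}\varphi$. By the previous paragraph there exists $\lambda\in(0,1)$ with $\lambda x^0+(1-\lambda)\bar{x}\in U$; this point lies in $\operatorname*{dom}_{-}\varphi$ yet has value $-\infty\not>c-1$, a contradiction. Hence no finite value $\bar{x}$ can exist, which is the assertion.

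I expect the only delicate point to be the bookkeeping forced by the restricted domain: lower semicontinuity is assumed on $\operatorname*{dom}_{-}\varphi$, so the neighborhood test is against $U\cap\operatorname*{dom}_{-}\varphi$ rather than against all of $X$, and one must verify that the approaching points $\lambda x^0+(1-\lambda)\bar{x}$ indeed belong to $\operatorname*{dom}_{-}\varphi$ — which they do, precisely because part (b) assigns them the value $-\infty$. Arguing directly with neighborhoods through the continuity of the segment map, rather than passing to sequences, also keeps the proof valid in an arbitrary, possibly non-first-countable, topological vector space.
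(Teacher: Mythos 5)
Your proof of part (e) is correct and takes essentially the same route as the paper's: both use part (b) to place points with value $-\infty$ along the segment from $x^0$ toward the given point, observe that these points enter every neighborhood of that point, and then invoke lower semicontinuity. The only cosmetic difference is that you argue by contradiction from an assumed finite value $c=\varphi(\bar{x})$, whereas the paper concludes directly that $\varphi(x)=-\infty$ for every $x\in\operatorname*{dom}_{-}\varphi$.
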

\begin{proof}
\begin{itemize}
\item[]
\item[(a)] Consider $x^1, x^2\in \operatorname*{dom}_{-}\varphi$, $\lambda\in (0,1)$, $x:=\lambda x^1+(1- \lambda) x^2$.
There exist $t_1, t_2\in \mathbb{R}$ such that $(x^1,t_1),(x^2,t_2)\in \operatorname*{epi}\varphi$.
Since $\operatorname*{epi}  \; \varphi$ is convex, we get for $x$ and $t:=\lambda t_1+(1- \lambda) t_2\in \mathbb{R}$ that $(x,t)\in\operatorname*{epi}\varphi$.
Hence $x\in \operatorname*{dom}\varphi$ and $\varphi (x)\le t$, which implies $x\in \operatorname*{dom}_{-}\varphi$.
\item[(b)] Assume $x^0 \in X$ with $\varphi (x^0) = - \infty $ and $x^1\in \operatorname*{dom}_{-}\varphi$.
$\Rightarrow \forall \lambda\in(0,1):\quad x^{\lambda}:=\lambda x^0+(1-\lambda )x^1\in \operatorname*{dom}_{-}\varphi$ because of (a).
$\exists t_1 \in\mathbb{R} :\; (x^1 ,t_1 )\in \operatorname*{epi}\varphi$.\\
Consider an arbitrary $\lambda\in(0,1)$, $t_\lambda\in\mathbb{R}$. 
$t:=\frac{1}{\lambda}t_\lambda -\frac{1-\lambda}{\lambda}t_1 \in \mathbb{R}$.
$\Rightarrow t_\lambda=\lambda t+(1-\lambda )t_1$.
$\Rightarrow (x^\lambda,t_\lambda)\in\operatorname*{epi}\varphi$, because $(x^0,t), (x^1,t_1)\in\operatorname*{epi}\varphi$ and $\operatorname*{epi}\varphi$ is convex.
Since $(x^\lambda,t_\lambda)\in\operatorname*{epi}\varphi$ for each $t_\lambda\in \mathbb{R}$, we get  
$\{x^\lambda \}\times\mathbb{R}\subseteq\operatorname*{epi}\varphi$ and $\varphi(x^\lambda)=-\infty$ for each $\lambda\in(0,1)$.
\item[(c)] Assume $x^0 \in X$ with $\varphi (x^0) = - \infty $ and $x^1\in \operatorname*{icr}(\operatorname*{dom}_{-}\varphi)$.
$\Rightarrow \exists\epsilon >0:\; x^2:=x^1+\epsilon (x^1-x^0)\in \operatorname*{dom}_{-}\varphi$.
$x^1=\lambda x^2+(1-\lambda) x^0$ with $\lambda:=\frac{1}{1+\epsilon}\in (0,1)$.
$\varphi(x^1)=-\infty$ follows from (b).
\item[(d)] Assume, under the assumptions of (d), that there exists some $\lambda\in (0,1]$ with $\varphi(-\lambda x)\not=+\infty$. $\Rightarrow 
-\lambda x\in \operatorname*{dom}_{-}\varphi$. $\Rightarrow \varphi(0)=-\infty$ because of (b), a contradiction.
\item[(e)] Assume $x^0 \in X$ with $\varphi (x^0) = - \infty $ and $x\in \operatorname*{dom}_{-}\varphi$.
$\Rightarrow \forall \lambda\in(0,1):\quad x^{\lambda}:=\lambda x^0+(1-\lambda )x\in \operatorname*{dom}_{-}\varphi$ and $\varphi(x^\lambda)=-\infty$ because of (b).\\
$\forall U\in\mathcal{N}(x)\;\exists\lambda\in(0,1):\; x^{\lambda}\in U$.
This results in $\varphi(x)=-\infty$ because of the definition of lower semicontinuity.
\end{itemize}
\end{proof}

Let us mention that $\operatorname*{dom}_{-}\varphi$ is the projection of $\operatorname*{epi}\varphi$ onto $X$.
\begin{remark}
Part (c) of the proposition can be found in \cite{Zal:02}, part (c) and (e) for $X=\mathbb{R}^n$ in \cite{Roc97}, but both authors define convexity with inequality (\ref{eq-cx}) and refer to the classical framework, where $+\infty$ is used instead of $\nu$ and points with function value $+\infty$ are excluded from the effective domain.
\end{remark}

\begin{lemma}\label{l-cx-anders}
Let $X$ be a linear space and $\varphi: X\rightarrow \overline{\mathbb{R}}_{\nu }$.\\
If $\varphi$ is convex or concave, then each of the following functions has the same property:
\begin{itemize}
\item[(a)] $\lambda\varphi$ with $\lambda\in\mathbb{R}_{>}$,
\item[(b)] $\varphi +c$ with $c\in\mathbb{R}$,
\item[(c)] $g:X \rightarrow \overline{\mathbb{R}}_{\nu}$ defined by $g(x)=\varphi (x-\overline{x})$ with $\overline{x}\in X$,
\item[(d)] $g:X \rightarrow \overline{\mathbb{R}}_{\nu}$ defined by $g(x)=\varphi (\lambda x)$ with $\lambda\in \mathbb{R}\setminus\{0\}$.
\end{itemize}
\end{lemma}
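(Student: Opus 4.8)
The plan is to reduce every case to the convex case and then exhibit, for each of (a)--(d), both the domain and the epigraph of the transformed function as an injective affine image (or translate) of $\operatorname*{dom}\varphi$ and $\operatorname*{epi}\varphi$; since affine maps preserve convexity, Definition \ref{def-conv-allg} will give the conclusion.

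First I would observe that it suffices to treat the convex case. If $\varphi$ is concave, then $-\varphi$ is convex by Lemma \ref{l-cx-cv}, and each of the four operations commutes with negation up to a sign change of the parameters: $-(\lambda\varphi)=\lambda(-\varphi)$, $-(\varphi+c)=(-\varphi)+(-c)$, and negation commutes with the substitutions $x\mapsto x-\overline{x}$ and $x\mapsto\lambda x$. Applying the convex result to $-\varphi$ and invoking Lemma \ref{l-cx-cv} once more then yields the concave statement. So from now on assume $\varphi$ convex, i.e. $\operatorname*{dom}\varphi$ and $\operatorname*{epi}\varphi$ are convex.

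Next I would compute the domain and epigraph in each case. For (a), $\psi:=\lambda\varphi$ with $\lambda>0$: since $\lambda\neq 0$ one has $\lambda\varphi(x)\in\overline{\mathbb{R}}$ iff $\varphi(x)\in\overline{\mathbb{R}}$, so $\operatorname*{dom}\psi=\operatorname*{dom}\varphi$, and $(x,t)\in\operatorname*{epi}\psi$ iff $\varphi(x)\le t/\lambda$ iff $(x,t/\lambda)\in\operatorname*{epi}\varphi$, so $\operatorname*{epi}\psi$ is the image of $\operatorname*{epi}\varphi$ under the linear map $(x,s)\mapsto(x,\lambda s)$. For (b), $\psi:=\varphi+c$ with $c\in\mathbb{R}$: here $\operatorname*{dom}\psi=\operatorname*{dom}\varphi$ and $\operatorname*{epi}\psi=\operatorname*{epi}\varphi+(0,c)$, a translate. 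For (c), $g(x)=\varphi(x-\overline{x})$: $\operatorname*{dom}g=\operatorname*{dom}\varphi+\overline{x}$ and $\operatorname*{epi}g=\operatorname*{epi}\varphi+(\overline{x},0)$. For (d), $g(x)=\varphi(\lambda x)$ with $\lambda\neq 0$: $\operatorname*{dom}g=\lambda^{-1}\operatorname*{dom}\varphi$ and $\operatorname*{epi}g$ is the image of $\operatorname*{epi}\varphi$ under $(y,s)\mapsto(\lambda^{-1}y,s)$. In each case $\operatorname*{dom}$ and $\operatorname*{epi}$ of the new function arise from the convex sets $\operatorname*{dom}\varphi$ and $\operatorname*{epi}\varphi$ by a scaling, a translation, or a composition of these on $X$ or $X\times\mathbb{R}$; such affine maps preserve convexity, so the new domain and epigraph are convex and the new function is convex by Definition \ref{def-conv-allg}.

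The main obstacle is verifying the set-theoretic identities for $\operatorname*{dom}$ and $\operatorname*{epi}$ in the presence of the symbolic value $\nu$ and the extended arithmetic, rather than the convexity-preservation itself. Specifically I would check that the equivalences $\lambda\varphi(x)\le t\Leftrightarrow\varphi(x)\le t/\lambda$ (for $\lambda>0$) and $\varphi(x)+c\le t\Leftrightarrow\varphi(x)\le t-c$ remain valid when $\varphi(x)\in\{-\infty,+\infty,\nu\}$, using the ordering rules $\nu\not\le y$, $y\not\le\nu$ together with the conventions $\lambda\nu=\nu$ and $\nu+c=\nu$, as well as the corresponding description of the domains. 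Once these elementary case distinctions are confirmed, the remainder is routine.
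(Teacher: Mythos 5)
Your proof is correct. The paper states Lemma \ref{l-cx-anders} without any proof, treating it as immediate from Definition \ref{def-conv-allg}; your argument — reducing the concave case to the convex one via Lemma \ref{l-cx-cv}, identifying $\operatorname*{dom}$ and $\operatorname*{epi}$ of each transformed function as an injective affine image or translate of $\operatorname*{dom}\varphi$ and $\operatorname*{epi}\varphi$, and verifying the required pointwise equivalences in the cases $\varphi(x)\in\{-\infty,+\infty,\nu\}$ using the paper's arithmetic and ordering conventions — is exactly the intended justification and fills in the details completely.
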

\smallskip

\section{Linearity and Related Algebraic Properties}\label{sec-lin-ext}

Usually, a linear function is a function that maps into a vector space. This is not the case for an extended real-valued function.
\begin{definition}\label{def-lin-fktal}
Let $X$ be a linear space and $\varphi: X\rightarrow \overline{\mathbb{R}}_{\nu }$.\\
$\varphi$ is an \textbf{affine} functional if  $\varphi$ is convex and concave.\\
If $\operatorname*{dom}\varphi =X$, then $\varphi$ is a \textbf{linear} functional if  $\varphi$ is convex and concave and $\varphi (0)=0$ holds. 
\end{definition}

\begin{remark}
Fenchel \cite[p. 59]{Fen53} proved that real-valued affine functionals on finite-dimensional vector spaces defined in the traditional way by an equality are just the functionals which are convex and concave.
\end{remark}

Obviously, each functional with a nonempty, convex effective domain which is constant on this domain with the value $c$ is affine and, in the case $c=0$, also linear.

We get from Definition \ref{def-lin-fktal}:
\begin{lemma}
Let $X$ be a linear space and $\varphi: X \to \overline{\mathbb{R}}$. \\
$\varphi$ is an affine functional with $\varphi (0)\in\mathbb{R}$ if and only if  it is the sum of some linear functional $\varphi_l: X \to \overline{\mathbb{R}}$ and 
some real value.
\end{lemma}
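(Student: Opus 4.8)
The plan is to reduce both implications to Lemma~\ref{l-cx-anders}(b), which says that adding a real constant to a convex (respectively concave) functional preserves that property, and hence preserves affinity. The key structural observation is that for a functional $\varphi: X\to\overline{\mathbb{R}}$ one always has $\operatorname*{dom}\varphi=X$, so the only requirement distinguishing a linear functional from a merely affine one is the normalization $\varphi(0)=0$. Thus the whole statement amounts to a translation by a suitable real constant.

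For the implication from right to left, suppose $\varphi=\varphi_l+c$ with $\varphi_l$ linear and $c\in\mathbb{R}$. By the definition of linearity $\varphi_l$ is convex and concave, so Lemma~\ref{l-cx-anders}(b) shows that $\varphi_l+c$ is again convex and concave, i.e. affine. Evaluating at the origin gives $\varphi(0)=\varphi_l(0)+c=0+c=c\in\mathbb{R}$, which is exactly the required finiteness at $0$.

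For the converse, assume $\varphi$ is affine with $\varphi(0)\in\mathbb{R}$, and set $c:=\varphi(0)$ and $\varphi_l:=\varphi-c=\varphi+(-c)$. Since $c$ is real, adding $-c$ never produces the indeterminate expression $+\infty+(-\infty)$, so $\varphi_l$ takes values in $\overline{\mathbb{R}}$ and $\operatorname*{dom}\varphi_l=\operatorname*{dom}\varphi=X$. Affinity of $\varphi$ together with Lemma~\ref{l-cx-anders}(b), applied with the real constant $-c$, yields that $\varphi_l$ is convex and concave, while $\varphi_l(0)=\varphi(0)-c=0$; hence $\varphi_l$ is linear. As $\varphi=\varphi_l+c$, the claim follows.

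The one step requiring care — and the only place the hypothesis $\varphi(0)\in\mathbb{R}$ is actually used — is the normalization in the converse: subtracting $c=\varphi(0)$ is well defined and forces $\varphi_l(0)=0$ precisely because $\varphi(0)$ is finite. If instead $\varphi(0)\in\{-\infty,+\infty\}$, no real shift could produce a functional vanishing at the origin, so the finiteness assumption is exactly what makes the decomposition possible.
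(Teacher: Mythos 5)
Your proof is correct, and it is essentially the argument the paper intends: the paper states this lemma without proof as an immediate consequence of Definition~\ref{def-lin-fktal}, and your translation-by-$\varphi(0)$ argument, with Lemma~\ref{l-cx-anders}(b) supplying preservation of convexity and concavity under addition of a real constant, is exactly the reasoning being left implicit. Your closing remark on why finiteness of $\varphi(0)$ is indispensable also correctly anticipates the paper's subsequent counterexample of an affine functional (with $\varphi(0)=-\infty$) that admits no such decomposition.
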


An affine functional is not necessarily the sum of a linear functional and a constant value.
\begin{example}
Define $\varphi : \mathbb{R}\to \overline{\mathbb{R}}$ by
\[ \varphi(x)=\left\{
\begin{array}{r@{\quad\mbox{ if }\quad}l}
-\infty & x\leq 0,\\
+\infty & x>0 .
\end{array}
\right.
\]
$\operatorname*{epi}\varphi =(-\infty,0]\times \mathbb{R}$ is convex and closed. 
$\operatorname*{hypo}\varphi = (0,+\infty)\times \mathbb{R}$ is a convex set.
$\varphi$ is an affine, lower semicontinuous functional, but not the sum of a linear functional and some value $c\in\overline{\mathbb{R}}$.
\end{example}

We get from Lemma \ref{l-cx-anders}:
\begin{lemma}\label{l-aff-anders}
Let $X$ be a linear space and $\varphi: X\rightarrow \overline{\mathbb{R}}_{\nu }$.\\
If $\varphi$ is affine, then each of the following functions is also affine:
\begin{itemize}
\item[(a)] $\lambda\varphi$ with $\lambda\in\mathbb{R}$,
\item[(b)] $\varphi +c$ with $c\in\mathbb{R}$,
\item[(c)] $g:X \rightarrow \overline{\mathbb{R}}_{\nu }$ defined by $g(x)=\varphi (x-\overline{x})$ with $\overline{x}\in X$.
\item[(d)] $g:X \rightarrow \overline{\mathbb{R}}_{\nu }$ defined by $g(x)=\varphi (\lambda x)$ with $\lambda\in \mathbb{R}$.
\end{itemize}
If $\operatorname*{dom}\varphi =X$ and $\varphi$ is linear, then the functions in {\rm (a)} and {\rm (d)} are also linear. 
\end{lemma}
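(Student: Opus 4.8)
The plan is to reduce the whole statement to Lemma~\ref{l-cx-anders} together with Definition~\ref{def-lin-fktal}, Lemma~\ref{l-cx-cv}, and the observation made earlier in this section that a functional which is constant on a nonempty convex effective domain is affine (and linear when the constant is $0$). Since by Definition~\ref{def-lin-fktal} affinity of $\varphi$ means precisely that $\varphi$ is simultaneously convex and concave, for each of the four constructions I would invoke Lemma~\ref{l-cx-anders} twice, once for the convexity of $\varphi$ and once for its concavity, to conclude that the new function is again both convex and concave, hence affine.

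For (b) and (c) this is immediate: parts (b) and (c) of Lemma~\ref{l-cx-anders} preserve convexity and concavity separately and impose no restriction on the parameter, so $\varphi+c$ and $x\mapsto\varphi(x-\overline{x})$ are affine with no further work. For (a) and (d) the only gap is that Lemma~\ref{l-cx-anders} restricts the scalar to $\mathbb{R}_{>}$ in (a) and to $\mathbb{R}\setminus\{0\}$ in (d), while here $\lambda$ ranges over all of $\mathbb{R}$. The positive-scalar subcase of (a) and the nonzero subcase of (d) follow again by applying Lemma~\ref{l-cx-anders} to both properties. For a negative scalar in (a) I would first note via Lemma~\ref{l-cx-cv} that $-\varphi$ is again affine, then write $\lambda\varphi=(-\lambda)(-\varphi)$ with $-\lambda>0$ and reduce to the positive case; one checks that this pointwise identity holds in $\overline{\mathbb{R}}_{\nu}$, including at $\pm\infty$ and at $\nu$, using the sign rules for multiplication and $-\nu=\nu$.

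The degenerate value $\lambda=0$ in (a) and (d) is handled separately. In (a) the rules $0\cdot y=0$ for $y\in\overline{\mathbb{R}}$ and $0\cdot\nu=\nu$ show that $0\cdot\varphi$ equals $0$ on $\operatorname*{dom}\varphi$ and $\nu$ outside it, so it is constant on the convex set $\operatorname*{dom}\varphi$ and is therefore affine by the quoted observation (the trivial case $\operatorname*{dom}\varphi=\emptyset$ being vacuously affine). In (d) with $\lambda=0$ the function $g(x)=\varphi(0)$ is constant on all of $X$, hence affine whether $\varphi(0)\in\overline{\mathbb{R}}$ or $\varphi(0)=\nu$. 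For the linearity claim I would finally verify the two extra conditions of Definition~\ref{def-lin-fktal}: when $\operatorname*{dom}\varphi=X$ and $\varphi(0)=0$, multiplication by a nonzero scalar and the substitution $x\mapsto\lambda x$ with $\lambda\neq 0$ both leave the domain equal to $X$ and send $0$ to $\lambda\cdot 0=0$ and to $\varphi(\lambda\cdot 0)=\varphi(0)=0$, respectively, while the $\lambda=0$ instances give the constant $0$ on $X$, which is linear by the same observation.

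I expect the main obstacle to be purely the bookkeeping in the extended arithmetic: ensuring that the scalar identity $\lambda\varphi=(-\lambda)(-\varphi)$ and the evaluation of $0\cdot\varphi$ behave correctly at the symbolic value $\nu$ and at $\pm\infty$, so that the scalar values $0$ and negative $\lambda$ not covered by Lemma~\ref{l-cx-anders} are treated without inadvertently altering the effective domain on which convexity and concavity are being tested.
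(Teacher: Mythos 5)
Your proposal is correct and follows essentially the same route as the paper, which states this lemma with no more justification than ``We get from Lemma~\ref{l-cx-anders}:''. You supply exactly the details the paper leaves implicit — reducing affinity to simultaneous convexity and concavity via Lemma~\ref{l-cx-anders}, handling negative scalars in (a) through $\lambda\varphi=(-\lambda)(-\varphi)$ and Lemma~\ref{l-cx-cv}, and disposing of $\lambda=0$ in (a) and (d) by the constant-on-a-convex-domain observation — and each of these steps checks out in the extended arithmetic, including at $\pm\infty$ and $\nu$.
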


A functional which is affine and lower semicontinuous on its domain is not necessarily continuous.

\begin{example}\label{ex-cv-cx1}
Define $\varphi : \mathbb{R}\to \overline{\mathbb{R}}_{\nu}$ by
\[ \varphi(x)=\left\{
\begin{array}{r@{\quad\mbox{ if }\quad}l}
1 & x=0,\\
+\infty & x\in (0,1],  \\
\nu & x\in \mathbb{R}\setminus [0,1] .
\end{array}
\right.
\]
$\operatorname*{dom}\varphi =[0,1]$ is closed and convex. $\operatorname*{epi}\varphi =\{ 0\}\times [1,+\infty)$ is convex and closed. $\operatorname*{hypo}\varphi = (\{ 0\}\times (-\infty, 1])\cup ((0,1]\times \mathbb{R})$ is convex, but not closed.
$\varphi$ is an improper convex and concave functional which is lower semicontinuous on its domain, but not continuous.
\end{example}

Let us now prove a statement (cp. \cite{Wei90}) which we will use in the next proposition.
\begin{lemma}\label{l-bd-generate}
Assume that $(X,\tau )$ is a topological vector space, $A\subset X$, $a\in A$, $x\in X\setminus \operatorname*{cl}A$.
\begin{itemize}
\item[(a)] There exists some $t\in (0,1]$ such that $ta+(1-t)x\in\operatorname*{bd}A$.
\item[(b)] There exists some $t\in (0,1]$ such that $ta+(1-t)x\in\operatorname*{bd}_M A$, where $\operatorname*{bd}_M A$ denotes the boundary of A w.r.t. the topology $\tau_M$ induced by $\tau$ on $M:=\{ta+(1-t)x\mid t\in [0,1]\}$.
\end{itemize}
\end{lemma}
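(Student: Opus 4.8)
The plan is to prove part (a) by a one-dimensional continuity/connectedness argument along the segment joining $a$ and $x$, and then to obtain part (b) essentially for free by recognizing that the same segment-point works for the relative boundary. First I would parametrize the segment by $\gamma(t):=ta+(1-t)x$ for $t\in[0,1]$ and introduce the set $T:=\{t\in[0,1]\mid \gamma(t)\in\operatorname*{cl}A\}$. Since $a\in A\subseteq\operatorname*{cl}A$ we have $1\in T$, so $T\neq\emptyset$; since $x=\gamma(0)\notin\operatorname*{cl}A$ we have $0\notin T$. The key is that $\operatorname*{cl}A$ is closed and $\gamma$ is continuous (scalar multiplication and addition are continuous in a topological vector space), so $T=\gamma^{-1}(\operatorname*{cl}A)\cap[0,1]$ is a closed subset of $[0,1]$ not containing $0$. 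I would then set $t_0:=\inf T$ and argue $t_0\in(0,1]$: the infimum of a nonempty closed set is attained, so $t_0\in T$, i.e. $\gamma(t_0)\in\operatorname*{cl}A$; and since $0\notin T$ and $T$ is closed, $t_0>0$.

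The heart of the argument is showing $\gamma(t_0)\in\operatorname*{bd}A=\operatorname*{cl}A\cap\operatorname*{cl}(X\setminus A)$. One inclusion, $\gamma(t_0)\in\operatorname*{cl}A$, is immediate from $t_0\in T$. For the other, I would show every neighborhood of $\gamma(t_0)$ meets $X\setminus A$, in fact $X\setminus\operatorname*{cl}A$, which suffices. By minimality of $t_0$, for every $t\in[0,t_0)$ we have $\gamma(t)\notin\operatorname*{cl}A$, hence $\gamma(t)\in X\setminus\operatorname*{cl}A\subseteq X\setminus A$. Because $\gamma$ is continuous at $t_0$, as $t\uparrow t_0$ the points $\gamma(t)$ converge to $\gamma(t_0)$; thus every neighborhood $U$ of $\gamma(t_0)$ contains $\gamma(t)$ for some $t<t_0$, and such a point lies in $X\setminus A$. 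Therefore $\gamma(t_0)\in\operatorname*{cl}(X\setminus A)$, and combined with $\gamma(t_0)\in\operatorname*{cl}A$ this gives $\gamma(t_0)\in\operatorname*{bd}A$, proving (a) with $t=t_0$.

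For part (b), I would observe that the same parameter value $t_0$ works, because the relative boundary is generally \emph{larger} or at least no smaller in the sense we need: the segment $M=\gamma([0,1])$ carries the subspace topology $\tau_M$, and $\gamma:[0,1]\to M$ remains continuous. Repeating the argument with $A\cap M$ in place of $A$ and closures taken in $(M,\tau_M)$, the point $\gamma(t_0)$ again satisfies $\gamma(t_0)\in\operatorname*{cl}_M(A\cap M)$ (it is a limit from within $T$) while $\gamma(t)\in M\setminus(A\cap M)$ for $t<t_0$, so $\gamma(t_0)\in\operatorname*{cl}_M(M\setminus A)$ as well; hence $\gamma(t_0)\in\operatorname*{bd}_M A$. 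The only subtlety to check is that $\gamma(t_0)\in\operatorname*{cl}_M(A\cap M)$ rather than merely $\operatorname*{cl}_M A$ — but since the approaching points $\gamma(t)$ for $t\to t_0^+$, $t\in T$, lie in $\operatorname*{cl}A\cap M$, a short approximation argument (or simply noting $a=\gamma(1)\in A\cap M$ when $t_0=1$) handles this.

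I expect the main obstacle to be the relative statement in part (b): one must be careful that the boundary is taken with respect to the induced topology $\tau_M$ and that the point produced is genuinely a relative boundary point, not just a point whose image happens to lie on the ambient boundary. The resolution is that confining everything to the line $M$ only makes neighborhoods smaller, so the "approach from the complement side" argument that worked in $X$ transfers verbatim to $M$; the continuity of the affine parametrization is what makes both parts go through, and this is exactly where the topological-vector-space hypothesis is used.
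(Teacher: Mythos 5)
Your proof of part (a) is correct and follows essentially the same route as the paper's: both locate the desired point as the first parameter value along the segment $\gamma(t)=ta+(1-t)x$ at which the set is met. The one difference is that you take $t_0=\inf\{t\in[0,1]\mid\gamma(t)\in\operatorname*{cl}A\}$, whereas the paper takes $t_0=\inf\{t\mid\gamma(t)\in A\}$; for (a) your variant is fine, and even slightly cleaner, since your set $T$ is closed and the infimum is attained.

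Part (b), however, has a genuine gap, located exactly at the point you flagged. With your choice of $t_0$, the same parameter value need \emph{not} give a point of $\operatorname*{bd}_M A$, and no approximation argument can repair this: the points approaching $\gamma(t_0)$ from above lie only in $\operatorname*{cl}A\cap M$, which can be strictly larger than $\operatorname*{cl}_M(A\cap M)$. Concretely, take $X=\mathbb{R}^2$, $x=(0,0)$, $a=(1,0)$ and $A=\{(1,0)\}\cup\{(s,t)\mid \tfrac12\le s\le 1,\ t>0\}$. Then $x\notin\operatorname*{cl}A$, $M$ is the segment on the first coordinate axis, and $M\cap\operatorname*{cl}A=\{(s,0)\mid \tfrac12\le s\le1\}$, so your $t_0=\tfrac12$ and $\gamma(t_0)=(\tfrac12,0)$; but $A\cap M=\{(1,0)\}$, hence $\operatorname*{bd}_M A=\{(1,0)\}$ and $\gamma(\tfrac12)\notin\operatorname*{bd}_M A$ (it does lie in $\operatorname*{bd}A$, consistent with (a)). The repair is the paper's choice of parameter: set $t_0:=\inf\{t\in[0,1]\mid \gamma(t)\in A\}$. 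By definition of the infimum and continuity of $\gamma$, every neighborhood of $\gamma(t_0)$ then contains a point $\gamma(t_1)\in A$ with $t_1\ge t_0$ and a point $\gamma(t_2)\notin A$ with $t_2<t_0$; since both witnesses lie on $M$, this yields $\gamma(t_0)\in\operatorname*{bd}A$ and $\gamma(t_0)\in\operatorname*{bd}_M A$ simultaneously. With this definition you must argue $t_0>0$ separately (if $t_0=0$ then $x\in\operatorname*{cl}A$, a contradiction), since the infimum is no longer automatically attained.
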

\begin{proof}
\begin{itemize}
\item[]
\item[(a)] $t_0:=\operatorname*{inf}\{t\in\mathbb{R}_+ :\;ta + (1-t)x\in A\}=\operatorname*{inf}\{t\in\mathbb{R}_+ :\;x + t(a-x)\in A\}\leq 1$ since $a\in A$.\\
Assume $t_0=0$. $\Rightarrow \;\forall U\in\mathcal{N}(x)\;\exists t\in\mathbb{R}_+:\;
x+t(a-x)\in A\cap U$.
$\Rightarrow x\in\operatorname*{cl}A$, a contradiction. Thus $t_0>0$.\\
$b:=t_0 a+(1-t_0)x$. The definition of $t_0$ implies:
$\forall V\in\mathcal{N}(b)\;\exists t_1\geq t_0\;\exists t_2<t_0:\;
t_1,t_2\in [0,1], x + t_1 (a-x)\in A\cap V$ and $x + t_2 (a-x)\in V\setminus A$.
Hence $b\in\operatorname*{bd}A$.
\item[(b)] can be proved in the same way as (a) when replacing the neighborhoods by their intersection with M.
\end{itemize}
\end{proof}

\begin{proposition}\label{p-const}
Let  $(X,\tau)$ be a topological vector space and $\varphi: X\rightarrow \overline{\mathbb{R}}_{\nu }$. 
If $\varphi$ is affine and continuous on its effective domain, then it is constant on $\operatorname*{dom}\varphi$ or proper.
\end{proposition}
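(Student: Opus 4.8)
The plan is to establish the dichotomy by showing directly that if $\varphi$ fails to be proper then it is constant on $\operatorname*{dom}\varphi$. Since continuity on $\operatorname*{dom}\varphi$ presupposes, via Definition \ref{d-cont-ext-real}, that $\operatorname*{dom}\varphi\neq\emptyset$, the functional $\varphi$ is automatically nontrivial; hence ``not proper'' means precisely that $\varphi$ attains the value $-\infty$ or $+\infty$ at some point of $\operatorname*{dom}\varphi$. Because $\varphi$ is affine, Lemma \ref{l-cx-cv} shows that $-\varphi$ is affine as well, and Lemma \ref{l-cont-anders}(a) (with $\lambda=-1$) shows that $-\varphi$ is continuous on $\operatorname*{dom}(-\varphi)=\operatorname*{dom}\varphi$. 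The situation is therefore symmetric under $\varphi\mapsto-\varphi$, so I may assume without loss of generality that $\varphi(x^0)=-\infty$ for some $x^0\in\operatorname*{dom}\varphi$; the case of an attained $+\infty$ is obtained by applying the conclusion to $-\varphi$.

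The first key step is to rule out finite values entirely. Being affine, $\varphi$ is in particular convex. By Proposition \ref{p-usc-lsc}(b), continuity at a point implies lower semicontinuity there, so $\varphi$, being continuous on $\operatorname*{dom}\varphi$, is lower semicontinuous at every point of $\operatorname*{dom}\varphi$ and hence on the subset $\operatorname*{dom}_{-}\varphi$. Since $\varphi$ attains $-\infty$, Proposition \ref{p-conv-infty}(e) now applies and yields that $\varphi$ has no finite values. Consequently the range of the effective part satisfies $\varphi^{\operatorname*{eff}}(\operatorname*{dom}\varphi)\subseteq\{-\infty,+\infty\}$.

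The second key step converts this into constancy by a connectedness argument. The set $\operatorname*{dom}\varphi$ is convex (because $\varphi$ is convex) and therefore connected in the topological vector space $X$, being path-connected through line segments. Since $\varphi^{\operatorname*{eff}}$ is continuous, its image is a connected subset of $\overline{\mathbb{R}}$ contained in the two-point set $\{-\infty,+\infty\}$. Explicitly, if $\varphi$ took both values, then $\{x\in\operatorname*{dom}\varphi\mid\varphi(x)<0\}$ and $\{x\in\operatorname*{dom}\varphi\mid\varphi(x)>0\}$ would be disjoint, nonempty sets that are open in the induced topology, being preimages under the continuous $\varphi^{\operatorname*{eff}}$ of the open rays $\{y<0\}$ and $\{y>0\}$ of $\overline{\mathbb{R}}$, and whose union is all of $\operatorname*{dom}\varphi$; this contradicts connectedness. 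Hence $\varphi^{\operatorname*{eff}}$ takes a single value and $\varphi$ is constant on $\operatorname*{dom}\varphi$.

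I expect the main obstacle to lie in the bookkeeping of the reduction rather than in the topological finish: one must verify that ``continuous on $\operatorname*{dom}\varphi$'' legitimately supplies the hypothesis ``lower semicontinuous on $\operatorname*{dom}_{-}\varphi$'' required by Proposition \ref{p-conv-infty}(e), and that passing to $-\varphi$ genuinely preserves affinity (Lemma \ref{l-cx-cv}) and continuity on the same effective domain (Lemma \ref{l-cont-anders}(a)). Once finite values are excluded by Proposition \ref{p-conv-infty}(e), the closing argument is routine precisely because $\{-\infty,+\infty\}$ is a discrete, hence disconnected, subspace of $\overline{\mathbb{R}}$.
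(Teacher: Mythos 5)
Your proof is correct. The first half coincides with the paper's: both you and the author reduce to Proposition \ref{p-conv-infty}(e) (via Proposition \ref{p-usc-lsc}(b) to pass from continuity to lower semicontinuity) to conclude that an improper affine functional that is continuous on its domain takes no finite values; in fact you are slightly more careful than the paper here, since you make explicit the reduction to $-\varphi$ when only the value $+\infty$ is attained, a case the paper's one-line invocation of part (e) silently skips. The second half is where you genuinely diverge. The paper takes the segment $M$ joining a point with value $-\infty$ to a point with value $+\infty$, invokes the bespoke boundary Lemma \ref{l-bd-generate} to produce a point of $\operatorname*{bd}_M(M\cap\operatorname*{dom}_{-}\varphi)$, and contradicts the local constancy that continuity forces near that point. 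You instead observe that $\operatorname*{dom}\varphi$ is convex, hence connected, while $\{-\infty,+\infty\}$ is a discrete subspace of $\overline{\mathbb{R}}$ (the rays $\{y<0\}$ and $\{y>0\}$ separate it), so the continuous $\varphi^{\operatorname*{eff}}$ must be constant. Your route dispenses with Lemma \ref{l-bd-generate} altogether and is the more conceptual and shorter argument; the paper's version is essentially a hands-on proof of connectedness of a segment, which has the merit of staying entirely within the lemmas the paper develops and of localizing the contradiction to a single line segment. Both are valid; nothing is missing from yours.
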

\begin{proof}
We consider a functional $\varphi : X\to \overline{\mathbb{R}}_{\nu}$ which is convex, concave and continuous on its effective domain, but not proper.
Hence part (e) of Proposition \ref{p-conv-infty} implies that $\varphi$ does not attain any real value.\\
Assume that $\varphi$ is not constant on $\operatorname*{dom}\varphi$.
Then there exist $x^1, x^2\in \operatorname*{dom}\varphi$ with $\varphi(x^1)=-\infty, \varphi(x^2)=+\infty$.
$M:=\{tx^1+(1-t)x^2\mid t\in[0,1]\}\subseteq \operatorname*{dom}\varphi$ since $\operatorname*{dom}\varphi$ is convex. $A:=M\cap\operatorname*{dom}_{-}\varphi$.\\
Since $\varphi$ is continuous and $\operatorname*{dom}_{-}\varphi =\{x\in \operatorname*{dom}\varphi\mid\varphi (x) = -\infty\}$, there exists some $U\in\mathcal{N}(x^2)$ 
with $U\cap \operatorname*{dom}_{-}\varphi =\emptyset$. Thus $x^2\notin \operatorname*{cl}\operatorname*{dom}_{-}\varphi$.
Because of $x^1\in A$ and $x^2\notin \operatorname*{cl}A$, Lemma \ref{l-bd-generate} implies the existence of some $t\in (0,1]$ such that $a:=tx^1+(1-t)x^2\in\operatorname*{bd}_M A$, where 
$\operatorname*{bd}_M A$ denotes the boundary of $A$ w.r.t. the topology induced by $\tau$ on $M$.
$a\in\operatorname*{dom}\varphi$ since $\operatorname*{dom}\varphi$ is convex. The continuity of $\varphi$ implies $\varphi (a)=-\infty$ by the definition of $A$ and the existence of some neighborhood $V$ of $a$ such that $\varphi(x)=-\infty$ for all $x\in V\cap\operatorname*{dom}\varphi$. 
Since $a\in\operatorname*{bd}_M A$, there exists some $x\in V\cap M:\;x\notin A$. This implies $x\notin \operatorname*{dom}_{-}\varphi$, a contradiction.
Consequently, $\varphi$ is constant on $\operatorname*{dom}\varphi$.
\end{proof}

\begin{corollary}\label{cont-lin-prop}
Assume that $X$ is a topological vector space and that $\varphi: X \to \overline{\mathbb{R}}$ is continuous and linear.
Then $\varphi$ is finite-valued.
\end{corollary}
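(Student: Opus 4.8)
The plan is to recognize this as an almost immediate consequence of Proposition \ref{p-const}. First I would observe that because $\varphi$ takes values in $\overline{\mathbb{R}}$ rather than in $\overline{\mathbb{R}}_{\nu}$, the symbolic value $\nu$ is never attained, so by Definition \ref{d-eff-part} every point lies in the effective domain; that is, $\operatorname*{dom}\varphi = X$, which is in particular nonempty and closed. By Definition \ref{def-lin-fktal}, linearity of $\varphi$ means that $\varphi$ is simultaneously convex and concave, hence affine, and that $\varphi(0)=0$. The hypothesis that $\varphi$ is continuous, read together with $\operatorname*{dom}\varphi = X$, says precisely that $\varphi$ is affine and continuous on its effective domain. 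Thus $\varphi$ satisfies all the hypotheses of Proposition \ref{p-const}.

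Next I would invoke Proposition \ref{p-const} to conclude that $\varphi$ is either constant on $\operatorname*{dom}\varphi$ or proper, and then dispose of the two alternatives separately. If $\varphi$ is proper, then by Definition \ref{d-eff-part} it is finite-valued on $\operatorname*{dom}\varphi = X$, which is exactly the assertion to be proved. If instead $\varphi$ is constant on $\operatorname*{dom}\varphi = X$, then since $0\in X$ the constant value must equal $\varphi(0)=0\in\mathbb{R}$, so once more $\varphi$ is finite-valued on all of $X$. In either case $\varphi$ is finite-valued, completing the argument.

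The argument is short and essentially mechanical once Proposition \ref{p-const} is in hand; there is no genuine analytic obstacle here. The two points that deserve care are the identification $\operatorname*{dom}\varphi = X$ (this is what rules out $\nu$ and lets ``continuous and linear'' be matched exactly to the hypotheses of the proposition) and, in the constant case, the use of the normalization $\varphi(0)=0$ coming from linearity to force the constant value to be the finite number $0$ rather than $\pm\infty$. I would explicitly note this last point, since the dichotomy of Proposition \ref{p-const} by itself does not exclude a constant value of $\pm\infty$; it is the linearity condition $\varphi(0)=0$ that makes the constant finite.
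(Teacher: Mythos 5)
Your proposal is correct and follows exactly the route the paper intends: the corollary is stated immediately after Proposition \ref{p-const}, and the intended argument is precisely the dichotomy you describe (proper, hence finite-valued on $\operatorname*{dom}\varphi=X$; or constant, with the constant forced to be $0$ by $\varphi(0)=0$). Your explicit remark that linearity's normalization $\varphi(0)=0$ is what rules out a constant value of $\pm\infty$ is the one non-mechanical point, and you handle it correctly.
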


A linear functional may be improper without being constant.
\begin{example}\label{bsp-lin-improp}
Define $\varphi : \mathbb{R}\to \overline{\mathbb{R}}$ by
\[ \varphi(x)=\left\{
\begin{array}{r@{\quad\mbox{ if }\quad}l}
-\infty & x<0,\\
0 & x=0,  \\
+\infty & x>0 .
\end{array}
\right.
\]
$\operatorname*{epi}\varphi =((-\infty,0)\times \mathbb{R})\cup (\{ 0\}\times [0,+\infty))$ and 
$\operatorname*{hypo}\varphi = ((0,+\infty)\times \mathbb{R})\cup (\{ 0\}\times (-\infty, 0])$ are convex, but not closed.
$\varphi$ is an improper linear functional.
\end{example}

Definition \ref{def-lin-fktal} is compatible with the usual definition of linear and affine real-valued functions. This will be shown in Theorem \ref{t-lin}.

Let us first introduce further algebraic properties of extended real-valued functions.

\begin{definition}\label{def-sub-fktal}
Let $X$  be a linear space and $\varphi: X\rightarrow \overline{\mathbb{R}}_{\nu }$. \\
$\varphi$ is said to be
\begin{itemize}
\item[(a)] \textbf{positively homogeneous} if  $\operatorname*{dom}\varphi$ and $\operatorname*{epi}\varphi$ are nonempty cones,
\item[(b)] \textbf{subadditive} if  $\operatorname*{dom}\varphi +\operatorname*{dom}\varphi \subseteq \operatorname*{dom}\varphi$ and $\operatorname*{epi}\varphi +\operatorname*{epi}\varphi \subseteq \operatorname*{epi}\varphi$ hold,
\item[(c)] \textbf{superadditive} if  $\operatorname*{dom}\varphi +\operatorname*{dom}\varphi \subseteq \operatorname*{dom}\varphi$ and $\operatorname*{hypo}\varphi +\operatorname*{hypo}\varphi \subseteq \operatorname*{hypo}\varphi$ hold,
\item[(d)] \textbf{additive} if it is subadditive and superadditive,
\item[(e)] \textbf{sublinear} if $\operatorname*{dom}\varphi$ and $\operatorname*{epi}\varphi$ are nonempty convex cones,
\item[(f)] \textbf{odd} if $\operatorname*{dom}\varphi = -\operatorname*{dom}\varphi$ and $\varphi (-x)=-\varphi (x)$ is satisfied for all 
$x\in \operatorname*{dom}\varphi$,
\item[(g)] \textbf{homogeneous} if $\varphi$ is positively homogeneous and odd,
\end{itemize}
\end{definition}

According to our definition, $\varphi (0)\in\{0,-\infty\}$ holds for each positively homogeneous and for each sublinear functional.
$\varphi$ is superadditive if and only if $-\varphi$ is subadditive.

\begin{lemma}\label{l-cxCone}
Let $C$ be a nonempty subset of a linear space $X$.
\[
\begin{array}{l@{\quad\iff\quad}l}
C \mbox{ is a convex cone} & \lambda _1 c^1 + \lambda _2 c^2 \in C \mbox{ for all }\lambda _1,\lambda _2\in\mathbb{R}_+, c^1, c^2\in C,\\
 & C \mbox{ is a cone and } C+C\subseteq C,\\
 & C \mbox{ is convex, } C+C\subseteq C \mbox{ and } 0\in C. 
\end{array}
\]
\end{lemma}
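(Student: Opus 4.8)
The plan is to establish the three stated equivalences by a single cyclic chain of implications rather than proving each pair separately. I would label the four conditions as follows: (i) $C$ is a convex cone; (ii) $\lambda_1 c^1 + \lambda_2 c^2 \in C$ for all $\lambda_1,\lambda_2\in\mathbb{R}_+$ and $c^1,c^2\in C$; (iii) $C$ is a cone with $C+C\subseteq C$; and (iv) $C$ is convex with $C+C\subseteq C$ and $0\in C$. I would then prove (i) $\Rightarrow$ (ii) $\Rightarrow$ (iii) $\Rightarrow$ (iv) $\Rightarrow$ (i). Throughout I would exploit that, by the paper's definition, a cone is closed under multiplication by \emph{every} $\lambda\in\mathbb{R}_+$, including $\lambda=0$; hence every nonempty cone contains the origin, a fact needed repeatedly.

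For (i) $\Rightarrow$ (ii): given $\lambda_1,\lambda_2\in\mathbb{R}_+$ and $c^1,c^2\in C$, if $\lambda_1+\lambda_2=0$ the combination equals $0\in C$; otherwise set $s:=\lambda_1+\lambda_2>0$ and write $\lambda_1 c^1+\lambda_2 c^2 = s\bigl(\tfrac{\lambda_1}{s}c^1+\tfrac{\lambda_2}{s}c^2\bigr)$, where the bracket is a genuine convex combination, hence in $C$ by convexity, and the outer scaling by $s$ keeps it in $C$ by the cone property. For (ii) $\Rightarrow$ (iii): choosing $\lambda_2=0$ in (ii) recovers the cone property, and choosing $\lambda_1=\lambda_2=1$ gives $C+C\subseteq C$. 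For (iii) $\Rightarrow$ (iv): convexity of $C$ follows because for $c^1,c^2\in C$ and $\lambda\in[0,1]$ the cone property yields $\lambda c^1,(1-\lambda)c^2\in C$ and then $C+C\subseteq C$ puts their sum in $C$; the two remaining assertions are immediate, since $C+C\subseteq C$ is assumed and $0\in C$ because $C$ is a nonempty cone.

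The main work is the closing implication (iv) $\Rightarrow$ (i). Convexity being assumed, it remains only to verify the cone property, that is, $\lambda c\in C$ for all $\lambda\in\mathbb{R}_+$ and $c\in C$. The cases $\lambda=0$ and $0<\lambda\le 1$ are dispatched at once using convexity together with the origin: $\lambda c=\lambda c+(1-\lambda)\cdot 0\in C$. The genuinely delicate case is $\lambda>1$, where convexity alone cannot inflate $c$; this is where I expect the real obstacle to lie. Here I would first use $C+C\subseteq C$ with an easy induction to obtain $nc\in C$ for every positive integer $n$, then pick an integer $n\ge\lambda$ and write $\lambda c=\tfrac{\lambda}{n}(nc)$ with $\tfrac{\lambda}{n}\in(0,1]$; applying the already-settled case for scalars in $(0,1]$ to the point $nc\in C$ then yields $\lambda c\in C$. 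This interplay between additive closure (to reach large integer multiples) and convexity with $0$ (to scale back down) is the crux of the argument, while all the other implications are routine.
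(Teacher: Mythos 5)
Your proof is correct and complete; in particular you rightly identify that the only non-routine point is recovering $\lambda c\in C$ for $\lambda>1$ from convexity, $0\in C$ and $C+C\subseteq C$, and your device of first forming $nc$ by additive closure and then scaling back down by $\tfrac{\lambda}{n}\in(0,1]$ handles it, with the case $\lambda_1=\lambda_2=0$ and the membership $0\in C$ for a nonempty cone also treated carefully. The paper states this lemma without proof, so there is nothing to compare against; your cyclic chain of implications is the standard and efficient way to establish all three equivalences at once.
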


This implies:

\begin{lemma}\label{l-sublin}
Let $X$  be a linear space and $\varphi: X\rightarrow \overline{\mathbb{R}}_{\nu }$. 
\[
\begin{array}{l@{\quad\iff\quad}l}
\varphi \mbox{ is sublinear } & \varphi \mbox{ is convex and positively homogeneous},\\
 & \varphi \mbox{ is subadditive and positively homogeneous},\\
 & \varphi \mbox{ is convex and subadditive and } \varphi (0)\leq 0. 
\end{array}
\]
\end{lemma}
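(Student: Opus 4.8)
The plan is to derive the lemma directly from Lemma \ref{l-cxCone} by applying it, in parallel, to the two sets $\operatorname*{dom}\varphi$ and $\operatorname*{epi}\varphi$, the latter viewed as a subset of the linear space $X\times\mathbb{R}$. The point is that each of the four conditions in the statement translates, under Definition \ref{def-sub-fktal} and Definition \ref{def-conv-allg}, into the requirement that both $\operatorname*{dom}\varphi$ and $\operatorname*{epi}\varphi$ satisfy one of the four equivalent descriptions of a nonempty convex cone collected in Lemma \ref{l-cxCone}. So the real content of the lemma is already packaged in that earlier result, and the work here is only to read off the dictionary correctly.

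Concretely, I would set up the following correspondence. Sublinearity says precisely that $\operatorname*{dom}\varphi$ and $\operatorname*{epi}\varphi$ are nonempty convex cones. Convexity says both sets are convex, while positive homogeneity says both are nonempty cones; hence ``convex and positively homogeneous'' is verbatim ``both sets are nonempty convex cones'', giving the first equivalence with essentially no work. For the second equivalence, subadditivity says $\operatorname*{dom}\varphi+\operatorname*{dom}\varphi\subseteq\operatorname*{dom}\varphi$ and $\operatorname*{epi}\varphi+\operatorname*{epi}\varphi\subseteq\operatorname*{epi}\varphi$, so ``subadditive and positively homogeneous'' says both sets are cones closed under addition; by the characterization ``$C$ is a cone and $C+C\subseteq C$'' in Lemma \ref{l-cxCone} this is again equivalent to both being nonempty convex cones. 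For the third equivalence I would invoke the last characterization in Lemma \ref{l-cxCone}, namely ``$C$ is convex, $C+C\subseteq C$ and $0\in C$''.

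The only step needing genuine care is the clause $\varphi(0)\le 0$ and its role as the ``$0\in C$'' condition for \emph{both} sets. Here I would observe that $(0,0)\in\operatorname*{epi}\varphi$ is, by definition of the epigraph, exactly the statement $\varphi(0)\le 0$, so this clause supplies the origin of $X\times\mathbb{R}$ to $\operatorname*{epi}\varphi$. It simultaneously supplies the origin to $\operatorname*{dom}\varphi$: since $\nu\not\le 0$ by the calculus rules for $\overline{\mathbb{R}}_{\nu}$, the inequality $\varphi(0)\le 0$ forces $\varphi(0)\in\overline{\mathbb{R}}$ and hence $0\in\operatorname*{dom}\varphi$. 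With $0\in\operatorname*{dom}\varphi$ and $(0,0)\in\operatorname*{epi}\varphi$ in hand, the last characterization of Lemma \ref{l-cxCone} applies to each set and yields sublinearity. For the converse direction I would note that any nonempty cone contains the origin (take $\lambda=0$ in the cone condition), so if $\varphi$ is sublinear then $\operatorname*{epi}\varphi$, being a cone, contains $(0,0)$, which is precisely $\varphi(0)\le 0$; thus the extra clause is automatic, and all four conditions collapse to the single statement that $\operatorname*{dom}\varphi$ and $\operatorname*{epi}\varphi$ are nonempty convex cones.
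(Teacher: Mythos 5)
Your proposal is correct and follows exactly the paper's route: the paper derives Lemma \ref{l-sublin} from Lemma \ref{l-cxCone} applied to $\operatorname*{dom}\varphi$ and $\operatorname*{epi}\varphi$ (it gives no further detail, writing only ``This implies''). Your careful treatment of the clause $\varphi(0)\le 0$ --- reading $(0,0)\in\operatorname*{epi}\varphi$ and using $\nu\not\le 0$ to get $0\in\operatorname*{dom}\varphi$ --- correctly fills in the one point the paper leaves implicit.
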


The properties defined in Definition \ref{def-sub-fktal} coincide with the usual definitions for real-valued functions. 

\begin{proposition}\label{p-hom}
Let $X$  be a linear space and $\varphi: X\rightarrow \overline{\mathbb{R}}_{\nu }$. 
\begin{itemize}
\item[(a)]
$\varphi$ is positively homogeneous with $\varphi (0)=0$ if and only if  
$\operatorname*{dom}\varphi$ is a nonempty cone and \[\varphi (\lambda x)=\lambda \varphi (x)\] is satisfied for all $\lambda
\in \mathbb{R}_{+}$ and $x\in \operatorname*{dom}\varphi$.
\item[(b)]
A proper function $\varphi$ is subadditive or superadditive if and only if\\
$\operatorname*{dom}\varphi + \operatorname*{dom}\varphi \subseteq \operatorname*{dom}\varphi$ and 
\[ 
\begin{array}{l@{\quad }l}
\varphi (x^1 +  x^2) \leq \varphi (x^1) +  \varphi (x^2) & \mbox{ or }\\
\varphi (x^1 +  x^2) \geq \varphi (x^1) +  \varphi (x^2), & \mbox{respectively},
\end{array}
\]
holds for all $x^1, x^2 \in \operatorname*{dom}\varphi$.
\item[(c)] $\varphi$ is homogeneous if and only if 
$\operatorname*{dom}\varphi$ is a nonempty cone,\linebreak
$\operatorname*{dom}\varphi=-\operatorname*{dom}\varphi$ and \[\varphi (\lambda x)=\lambda \varphi (x)\] 
holds for all $\lambda \in \mathbb{R}$ and $x\in \operatorname*{dom}\varphi$.
\end{itemize}
\end{proposition}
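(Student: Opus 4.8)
The plan is to prove the three parts in order, deducing (c) from (a) and obtaining the superadditive half of (b) from the subadditive half. Throughout, the task is to convert the set-theoretic conditions of Definition \ref{def-sub-fktal}---that $\operatorname*{dom}\varphi$ and $\operatorname*{epi}\varphi$ (or $\operatorname*{hypo}\varphi$) be cones or be closed under addition---into the pointwise statements on $\varphi$. The device that does most of the work is the following: for a real $\lambda>0$, the cone property of $\operatorname*{epi}\varphi$ (stability under every nonnegative scalar) yields the biconditional $(x,t)\in\operatorname*{epi}\varphi\iff(\lambda x,\lambda t)\in\operatorname*{epi}\varphi$ for all $(x,t)\in X\times\mathbb{R}$, the two directions being scaling by $\lambda$ and by $1/\lambda$.

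For part (a) I would argue both implications through the epigraph. For the forward direction, assume $\varphi$ positively homogeneous with $\varphi(0)=0$; then $\operatorname*{dom}\varphi$ is a nonempty cone by definition, and for $x\in\operatorname*{dom}\varphi$, $\lambda>0$ the biconditional gives, for every $t\in\mathbb{R}$, that $\varphi(\lambda x)\le t\iff\lambda\varphi(x)\le t$. Hence $\varphi(\lambda x)$ and $\lambda\varphi(x)$ have identical sets of real upper bounds and therefore coincide in $\overline{\mathbb{R}}$. The case $\lambda=0$ is separate: $\varphi(0\cdot x)=\varphi(0)=0=0\cdot\varphi(x)$ via the convention $0\cdot(\pm\infty)=0$. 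For the converse, $\varphi(0)=\varphi(0\cdot x)=0\cdot\varphi(x)=0$ for any $x\in\operatorname*{dom}\varphi$ gives $\varphi(0)=0$ and $(0,0)\in\operatorname*{epi}\varphi$; that $\operatorname*{epi}\varphi$ is a cone then follows by checking that $(x,t)\in\operatorname*{epi}\varphi$ and $\mu\ge 0$ force $(\mu x,\mu t)\in\operatorname*{epi}\varphi$, splitting into $\mu>0$ (use $\varphi(\mu x)=\mu\varphi(x)\le\mu t$) and $\mu=0$ (use $(0,0)\in\operatorname*{epi}\varphi$).

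Part (b) is a direct epigraph computation made clean by properness. For subadditivity: given the pointwise inequality, feed $(x^1,t_1),(x^2,t_2)\in\operatorname*{epi}\varphi$ into it, noting $\varphi(x^i)\in\mathbb{R}$ (properness), so $\varphi(x^1+x^2)\le\varphi(x^1)+\varphi(x^2)\le t_1+t_2$ and $(x^1+x^2,t_1+t_2)\in\operatorname*{epi}\varphi$; conversely, evaluate at $(x^i,\varphi(x^i))$. The superadditive case I would not redo: $\varphi$ is superadditive iff $-\varphi$ is subadditive (remark after Definition \ref{def-sub-fktal}), $-\varphi$ is again proper, and since all values involved are real $-(\varphi(x^1)+\varphi(x^2))=(-\varphi)(x^1)+(-\varphi)(x^2)$, so the subadditive equivalence for $-\varphi$ transcribes to the superadditive one for $\varphi$. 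For part (c), oddness forces $\varphi(0)=-\varphi(0)$ at $0\in\operatorname*{dom}\varphi$, hence $\varphi(0)=0$, so part (a) applies and yields $\varphi(\lambda x)=\lambda\varphi(x)$ for $\lambda\ge 0$; the negative scalars come from writing $\lambda=-\mu$ with $\mu>0$ and combining $\varphi(-y)=-\varphi(y)$ (oddness) with $\varphi(\mu x)=\mu\varphi(x)$. The converse reads off positive homogeneity from the $\lambda\ge 0$ restriction via (a), and oddness from the case $\lambda=-1$ together with $\operatorname*{dom}\varphi=-\operatorname*{dom}\varphi$.

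The only genuinely delicate point is the equality $\varphi(\lambda x)=\lambda\varphi(x)$ in the forward direction of (a): a naive scaling of the point $(x,\varphi(x))$ delivers $\varphi(\lambda x)\le\lambda\varphi(x)$ only when $\varphi(x)$ is finite, and one must still secure the reverse inequality and the cases $\varphi(x)\in\{-\infty,+\infty\}$. This is exactly why I would route the argument through the biconditional on $\operatorname*{epi}\varphi$ and the injectivity of $a\mapsto\{t\in\mathbb{R}\mid a\le t\}$ on $\overline{\mathbb{R}}$, rather than manipulating possibly infinite values by hand; the convention $0\cdot(\pm\infty)=0$ must be invoked explicitly at every occurrence of the scalar $0$.
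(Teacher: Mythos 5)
Your proof is correct, and its overall architecture coincides with the paper's: each direction of each part is obtained by translating the cone/addition conditions on $\operatorname*{dom}\varphi$, $\operatorname*{epi}\varphi$ and $\operatorname*{hypo}\varphi$ into pointwise statements; part (b) reduces the superadditive case to the subadditive one via $-\varphi$ (legitimately, since properness makes $-(\varphi(x^1)+\varphi(x^2))=(-\varphi)(x^1)+(-\varphi)(x^2)$ unproblematic); and part (c) is deduced from part (a) together with oddness. The one genuine local difference is in the forward direction of (a): the paper splits into the three cases $\varphi(x)\in\mathbb{R}$, $\varphi(x)=-\infty$, $\varphi(x)=+\infty$, handling the first by a small contradiction argument and the last by scaling back with $1/\lambda$, whereas you derive the two-sided implication $(x,t)\in\operatorname*{epi}\varphi\iff(\lambda x,\lambda t)\in\operatorname*{epi}\varphi$ and conclude $\varphi(\lambda x)=\lambda\varphi(x)$ from the injectivity of $a\mapsto\{t\in\mathbb{R}\mid a\le t\}$ on $\overline{\mathbb{R}}$. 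This buys a uniform treatment of all three value cases at once; the only point to keep visible is that $\lambda x\in\operatorname*{dom}\varphi$, i.e. $\varphi(\lambda x)\ne\nu$, is needed for the injectivity argument to apply, and this is supplied by $\operatorname*{dom}\varphi$ being a cone. You also make explicit a step the paper leaves implicit in (c): oddness at $0$ rules out the value $\varphi(0)=-\infty$ that positive homogeneity alone would still permit, so that part (a) is actually applicable.
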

\begin{proof} 
\begin{itemize}
\item[]
\item[(a)](i) Suppose first that $\varphi$ is positively homogeneous, i.e. that $\operatorname*{dom}\varphi$ and $\operatorname*{epi}\varphi$ are cones. Assume $\lambda \in \mathbb{R}_{+}\setminus\{ 0\}$.
Consider first some $x\in \operatorname*{dom}\varphi$ with $t:=\varphi (x)\in\mathbb{R}$. $\lambda\cdot (x,t)\in \operatorname*{epi}\varphi$ since $(x,t)\in \operatorname*{epi}\varphi$.
$\Rightarrow \varphi(\lambda x)\le \lambda t$.
Suppose $\varphi(\lambda x) < \lambda t$. $\Rightarrow \exists \lambda_1 \in \mathbb{R}$ with $ \lambda_1 < \lambda t:\quad \varphi(\lambda x)<\lambda_1$. 
$\Rightarrow (\lambda x,\lambda_1)\in \operatorname*{epi}\varphi$.
If $\lambda >0$, then $\frac{1}{\lambda}\cdot (\lambda x,\lambda_1)\in \operatorname*{epi}\varphi$, i.e. $(x,\frac{\lambda_1}{\lambda})\in \operatorname*{epi}\varphi$.
$\Rightarrow t=\varphi(x) \le \frac{\lambda_1}{\lambda}$, which implies $\lambda t \le \lambda_1$, a contradiction. Thus $\varphi(\lambda x) = \lambda t=\lambda \varphi(x)$. \\
Consider now some $x\in \operatorname*{dom}\varphi$ with $\varphi (x)=-\infty$. $\Rightarrow (x,t)\in \operatorname*{epi}\varphi$ for all $t\in\mathbb{R}$.
$\Rightarrow (\lambda x,\lambda t)\in \operatorname*{epi}\varphi$ for all $t\in\mathbb{R}$ since $\operatorname*{epi}\varphi$ is a cone.
$\Rightarrow \varphi (\lambda x)=-\infty=\lambda \varphi(x)$.\\
Consider finally some $x\in \operatorname*{dom}\varphi$ with $\varphi (x)=+\infty$. If $\varphi (\lambda x)\not= +\infty$, then $\varphi(x)=\varphi(\frac{1}{\lambda}\lambda x)=
\frac{1}{\lambda}\varphi (\lambda x)\not= +\infty$ by the above statements, a contradiction.  \\
(ii) Assume now that $\operatorname*{dom}\varphi$ is a cone and $\varphi (\lambda x)=\lambda \varphi (x)$ is satisfied for all $\lambda
\in \mathbb{R}_{+}$ and $x\in \operatorname*{dom}\varphi$. If $(x^1,t_1)\in\operatorname*{epi}\varphi$, then for each $\lambda_1\in \mathbb{R}_{+}$:\; $\lambda_1 x^1\in\operatorname*{dom}\varphi$ and $\varphi(\lambda_1 x^1)=\lambda_1\varphi(x^1)\leq \lambda_1 t_1$, hence $(\lambda_1 x^1,\lambda_1 t_1)\in\operatorname*{epi}\varphi$. 
Thus $\operatorname*{epi}\varphi$ is a cone.
\item[(b)] (i) Suppose first that $\varphi$ is subadditive, i.e. $\operatorname*{dom}\varphi + \operatorname*{dom}\varphi \subseteq \operatorname*{dom}\varphi$ and $\operatorname*{epi}\varphi +\operatorname*{epi}\varphi \subseteq \operatorname*{epi}\varphi$.
Consider $x^1, x^2 \in \operatorname*{dom}\varphi$.
$\Rightarrow (x^1,\varphi(x^1)),(x^2,\varphi(x^2))\in\operatorname*{epi}\varphi$.
$\Rightarrow (x^1,\varphi(x^1))+(x^2,\varphi(x^2))\in\operatorname*{epi}\varphi$.
$\Rightarrow \varphi(x^1+x^2)\leq\varphi(x^1)+\varphi(x^2)$.\\
(ii) Assume that $\operatorname*{dom}\varphi + \operatorname*{dom}\varphi \subseteq \operatorname*{dom}\varphi$ and 
that $\varphi (x^1 +  x^2) \leq \varphi (x^1) +  \varphi (x^2)$ holds for all $x^1, x^2 \in \operatorname*{dom}\varphi$.
If $(x^3,t_3),(x^4,t_4)\in \operatorname*{epi}\varphi$, then the assumption yields that $x^3+x^4\in \operatorname*{dom}\varphi$ and 
$\varphi(x^3+x^4)\leq \varphi (x^3) +  \varphi (x^4)\leq t_3+t_4$.
$\Rightarrow (x^3+x^4,t_3+t_4)\in\operatorname*{epi}\varphi$.\\
We have proved part (b) for subadditivity. The statement for a superadditive function $\varphi$ results from the subadditivity of $-\varphi$.
\item[(c)]
(i) Let us first assume that $\varphi$ is homogeneous, i.e. odd and positively homogeneous.
Then $\operatorname*{dom}\varphi$ is a cone, $\operatorname*{dom}\varphi=-\operatorname*{dom}\varphi$, $\varphi(-x)=-\varphi(x)$ and $\varphi (\lambda x)=\lambda \varphi (x)$ 
hold for all $\lambda \in \mathbb{R}_{+}$ and $x\in \operatorname*{dom}\varphi$.
Consider some arbitrary $x\in \operatorname*{dom}\varphi$ and $\lambda_1 <0$. 
Then $\varphi(\lambda_1 x)=\varphi((-\lambda_1)\cdot (-x))=(-\lambda_1)\varphi (-x)=(-\lambda_1)\cdot (-\varphi(x))
=\lambda_1 \varphi(x)$. Consequently, $\varphi (\lambda x)=\lambda \varphi (x)$ 
holds for all $\lambda \in \mathbb{R}$ and $x\in \operatorname*{dom}\varphi$.\\
(ii) The reverse direction of the equivalence is obvious because of (a).
\end{itemize}
\end{proof}

The statement of Proposition \ref{p-hom} (b) cannot be extended to functions which are not proper. This is illustrated by Example \ref{bsp-lin-improp}. There the function $\varphi$ is additive, but $\varphi(-1+1)=0\not=\nu=\varphi(-1)+\varphi(+1)$.

\begin{lemma}\label{l-homo-anders}
Let $X$  be a linear space and $\varphi: X\rightarrow \overline{\mathbb{R}}_{\nu }$. 
\begin{itemize}
\item[(a)] If $\varphi$ is positively homogeneous, additive, odd or homogeneous, then
$\lambda\varphi$ with $\lambda\in\mathbb{R}$ has the same property.
\item[(b)] If $\varphi$ is subadditive, superadditive or sublinear, then
$\lambda\varphi$ with $\lambda\in\mathbb{R}_{+}\setminus\{ 0\}$ has the same property.
\item[(c)] If $\varphi$ is subadditive, then $\varphi +c$ with $c\in\mathbb{R}$, $c>0$, is subadditive.
\item[(d)] If $\varphi$ is positively homogeneous, subadditive, superadditve, additive, sublinear, odd or homogeneous, then
$g:X \rightarrow \overline{\mathbb{R}}_{\nu}$ defined by $g(x)=\varphi (\lambda x)$ with $\lambda\in \mathbb{R}\setminus\{0\}$ has the same property.
\end{itemize}
\end{lemma}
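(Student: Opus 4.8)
The plan is to realize each of the four operations as a single invertible affine map acting on the epigraph and hypograph, and then to transport the defining set conditions of Definition \ref{def-sub-fktal} through that map. First I would record the bookkeeping. For $\lambda\in\mathbb{R}\setminus\{0\}$ one has $\operatorname*{dom}(\lambda\varphi)=\operatorname*{dom}\varphi$, and the linear automorphism $T_\lambda$ of $X\times\mathbb{R}$ given by $T_\lambda(x,s)=(x,\lambda s)$ satisfies $\operatorname*{epi}(\lambda\varphi)=T_\lambda(\operatorname*{epi}\varphi)$ and $\operatorname*{hypo}(\lambda\varphi)=T_\lambda(\operatorname*{hypo}\varphi)$ when $\lambda>0$, whereas for $\lambda<0$ the inequality $\lambda\varphi(x)\le t$ reverses, giving $\operatorname*{epi}(\lambda\varphi)=T_\lambda(\operatorname*{hypo}\varphi)$ and $\operatorname*{hypo}(\lambda\varphi)=T_\lambda(\operatorname*{epi}\varphi)$. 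For part (d) the substitution $x\mapsto\lambda x$ induces $\Phi_\lambda(y,t)=(\lambda^{-1}y,t)$ with $\operatorname*{dom}g=\lambda^{-1}\operatorname*{dom}\varphi$, $\operatorname*{epi}g=\Phi_\lambda(\operatorname*{epi}\varphi)$ and $\operatorname*{hypo}g=\Phi_\lambda(\operatorname*{hypo}\varphi)$ for \emph{every} $\lambda\ne 0$, crucially \emph{without} any interchange of epigraph and hypograph. For part (c) I would note $\operatorname*{dom}(\varphi+c)=\operatorname*{dom}\varphi$ and $\operatorname*{epi}(\varphi+c)=\operatorname*{epi}\varphi+(0,c)$.

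The elementary facts I would invoke are that a linear automorphism carries cones to cones and convex sets to convex sets and commutes with Minkowski sums, so that $T(A)+T(A)\subseteq T(A)$ is equivalent to $A+A\subseteq A$, and likewise for convex-cone conditions. With this, parts (b) and (d) become routine. For (b) with $\lambda>0$, subadditivity, superadditivity and sublinearity are phrased solely through $\operatorname*{epi}\varphi$ or $\operatorname*{hypo}\varphi$ together with $\operatorname*{dom}\varphi$, each of which survives application of $T_\lambda$. For (d), since $\Phi_\lambda$ never swaps the two graphs, every listed property is preserved for all $\lambda\ne 0$, the oddness condition being checked directly from $g(-x)=\varphi(-\lambda x)=-\varphi(\lambda x)=-g(x)$ together with $\operatorname*{dom}g=-\operatorname*{dom}g$. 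Part (c) follows from the same sum bookkeeping once I observe that epigraphs are upward closed, so $\operatorname*{epi}\varphi+(0,c)\subseteq\operatorname*{epi}\varphi$ for $c>0$; then $\operatorname*{epi}(\varphi+c)+\operatorname*{epi}(\varphi+c)=(\operatorname*{epi}\varphi+\operatorname*{epi}\varphi)+(0,2c)\subseteq\operatorname*{epi}\varphi+(0,c)=\operatorname*{epi}(\varphi+c)$, which is precisely where $c>0$ enters (and where the domain condition is unchanged).

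For part (a) the case $\lambda\ge 0$ is handled as above, the value $\lambda=0$ giving the constant functional $0$ on the cone $\operatorname*{dom}\varphi$, whose epigraph is $\operatorname*{dom}\varphi\times\mathbb{R}_{+}$; and oddness is preserved for every $\lambda$ by the computation $(\lambda\varphi)(-x)=-(\lambda\varphi)(x)$. The main obstacle is the subcase $\lambda<0$, where $T_\lambda$ interchanges $\operatorname*{epi}\varphi$ and $\operatorname*{hypo}\varphi$, so a property stated through $\operatorname*{epi}\varphi$ alone becomes, after negation, a requirement on $\operatorname*{hypo}\varphi$. For additivity this is harmless, since additivity is the two-sided conjunction of the epigraph and hypograph sum-conditions and the interchange merely permutes them. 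For homogeneity it is also harmless, because homogeneity forces $\varphi(0)=0$ and the scaling identity $\varphi(\mu x)=\mu\varphi(x)$ of Proposition \ref{p-hom} then makes both $\operatorname*{epi}\varphi$ and $\operatorname*{hypo}\varphi$ cones.

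The genuinely delicate step, which I would treat most carefully, is positive homogeneity taken by itself under a negative factor: here one must show that $\operatorname*{hypo}\varphi$ is a cone in order to conclude that $\operatorname*{epi}(\lambda\varphi)=T_\lambda(\operatorname*{hypo}\varphi)$ is one. I would derive this from $\varphi(\mu x)=\mu\varphi(x)$ for $\mu\in\mathbb{R}_{+}$ via Proposition \ref{p-hom}(a), giving separate attention to the admissible value $\varphi(0)=-\infty$, for which the scaling identity at $\mu=0$ is not available and the behaviour of the epigraph of $\lambda\varphi$ near the origin has to be examined directly. This boundary case is exactly the point where the otherwise uniform transport argument needs a dedicated verification.
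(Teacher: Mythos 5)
Your transport formalism is correct and cleanly disposes of parts (b), (c) and (d), and of the additive, odd and homogeneous clauses of (a); in particular the observation that homogeneity forces $\varphi(0)=0$, after which Proposition \ref{p-hom} makes $\operatorname*{hypo}\varphi$ a nonempty cone, is exactly right. The gap is at the one point you flag and then defer: positive homogeneity alone under a negative factor when $\varphi(0)=-\infty$. The ``dedicated verification'' you promise cannot be supplied, because the claim fails there. If $\lambda<0$ and $\varphi(0)=-\infty$, then $(\lambda\varphi)(0)=\lambda\cdot(-\infty)=+\infty$, so $(0,0)\notin\operatorname*{epi}(\lambda\varphi)$; but in this paper a cone is closed under multiplication by every $\mu\in\mathbb{R}_{+}$, including $\mu=0$, so a nonempty cone must contain the origin, and hence $\operatorname*{epi}(\lambda\varphi)$ is either empty or not a cone. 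Concretely, take $X=\mathbb{R}$ and $\varphi\equiv-\infty$: then $\operatorname*{dom}\varphi=\mathbb{R}$ and $\operatorname*{epi}\varphi=\mathbb{R}\times\mathbb{R}$ are nonempty cones, so $\varphi$ is positively homogeneous, while $-\varphi\equiv+\infty$ has empty epigraph. A less degenerate example is $\varphi(0)=-\infty$, $\varphi(x)=0$ for $x\neq 0$, for which $\operatorname*{epi}(-\varphi)=(\mathbb{R}\setminus\{0\})\times\mathbb{R}_{+}$ is nonempty but misses $(0,0)$.

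Since the paper states this lemma without proof, this is not a divergence from the author's argument but a defect in the statement itself: the positively homogeneous clause of (a) is valid only for $\lambda\in\mathbb{R}_{+}$, or for arbitrary $\lambda$ under the additional hypothesis $\varphi(0)=0$ (the paper itself records that $\varphi(0)\in\{0,-\infty\}$ for positively homogeneous functionals, so the bad case is admitted by the definitions). You should either add that hypothesis and then complete the cone verification for $\operatorname*{hypo}\varphi$ via Proposition \ref{p-hom}(a), or record the counterexample. Everything else in your proposal is sound; in particular the $\lambda=0$ case, the swap-free map $\Phi_\lambda$ in (d), and the upward-closedness argument that makes $c>0$ essential in (c) are all correct.
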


We are now going to investigate the relationship between additive and linear functions.
\begin{lemma}\label{l-subadd}
Let  $\varphi: X\rightarrow \overline{\mathbb{R}}_{\nu }$ be a subadditive function on a linear space $X$. 
\begin{itemize}
\item[(a)] If $\varphi(0)=-\infty$, then $\varphi$ does not attain any real value.
\item[(b)] If $\{x,-x\}\subset\operatorname*{dom}\varphi$, $\varphi(x)=-\infty$ and $\varphi(0)\not=-\infty$, then $\varphi(-x)=+\infty$.
\item[(c)] If $\varphi(0)\in\mathbb{R}$, then $\varphi(0)\ge 0$ and $\varphi(nx)=-\infty$ for all $n\in\mathbb{N}\setminus\{ 0\}$ and all $x\in\operatorname*{dom}\varphi$ with $\varphi(x)=-\infty$.
\end{itemize}
\end{lemma}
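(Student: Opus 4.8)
The plan is to argue entirely through the epigraph, exploiting that subadditivity means $\operatorname*{epi}\varphi + \operatorname*{epi}\varphi \subseteq \operatorname*{epi}\varphi$ and that, by Lemma \ref{l-epihypo}, a point $x$ with $\varphi(x) = -\infty$ contributes an entire vertical line $\{x\} \times \mathbb{R}$ to $\operatorname*{epi}\varphi$, whereas a point with $\varphi(x) = +\infty$ contributes no point at all. Since $\operatorname*{epi}\varphi$ lives in $X \times \mathbb{R}$, every coordinate in play is real, so I never have to evaluate the problematic sum $+\infty + (-\infty)$. This is precisely why I avoid the pointwise inequality $\varphi(x^1 + x^2) \le \varphi(x^1) + \varphi(x^2)$, which is only available for proper $\varphi$ via Proposition \ref{p-hom}(b).

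For (a), assume $\varphi(0) = -\infty$, so $\{0\} \times \mathbb{R} \subseteq \operatorname*{epi}\varphi$. If some $x$ had $\varphi(x) = r \in \mathbb{R}$, then $(x, r) \in \operatorname*{epi}\varphi$, and adding the points $(0, s)$, $s \in \mathbb{R}$, would give $(x, r + s) \in \operatorname*{epi}\varphi$ for every $s$, i.e. $\{x\} \times \mathbb{R} \subseteq \operatorname*{epi}\varphi$; by Lemma \ref{l-epihypo}(b) this forces $\varphi(x) = -\infty$, a contradiction. For (b), $\varphi(x) = -\infty$ again gives $\{x\} \times \mathbb{R} \subseteq \operatorname*{epi}\varphi$. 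If $\varphi(-x) \neq +\infty$, then since $-x \in \operatorname*{dom}\varphi$ there is some $r \in \mathbb{R}$ with $(-x, r) \in \operatorname*{epi}\varphi$; adding it to $(x, t)$ for all $t \in \mathbb{R}$ yields $(0, t + r) \in \operatorname*{epi}\varphi$ for all $t$, hence $\{0\} \times \mathbb{R} \subseteq \operatorname*{epi}\varphi$ and $\varphi(0) = -\infty$, contradicting the hypothesis; therefore $\varphi(-x) = +\infty$.

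For (c), assume $\varphi(0) \in \mathbb{R}$. Then $(0, \varphi(0)) \in \operatorname*{epi}\varphi$, and adding this point to itself gives $(0, 2\varphi(0)) \in \operatorname*{epi}\varphi$, so $\varphi(0) \le 2\varphi(0)$ and thus $\varphi(0) \ge 0$. For the second claim, fix $x \in \operatorname*{dom}\varphi$ with $\varphi(x) = -\infty$ and $n \ge 1$; summing $n$ copies of points $(x, t_i) \in \operatorname*{epi}\varphi$ produces $(nx, t_1 + \cdots + t_n) \in \operatorname*{epi}\varphi$, and since $t_1 + \cdots + t_n$ ranges over all of $\mathbb{R}$ I obtain $\{nx\} \times \mathbb{R} \subseteq \operatorname*{epi}\varphi$, i.e. $\varphi(nx) = -\infty$ by Lemma \ref{l-epihypo}(b). (Equivalently, one argues by induction on $n$, the inductive step being the same two-line epigraph addition.)

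The arguments are short once the epigraph viewpoint is adopted; the only real obstacle is conceptual, namely resisting the temptation to manipulate the values $\pm\infty$ directly. Because $\varphi$ is not assumed proper, pointwise reasoning runs into the undefined expression $+\infty + (-\infty) = \nu$ (cf. Example \ref{ex-extended-real} and the remark following Proposition \ref{p-hom}), so the whole discipline of the proof is to keep all additions inside $X \times \mathbb{R}$ and to read off the value $-\infty$ or $+\infty$ only at the end, via Lemma \ref{l-epihypo}(b),(c).
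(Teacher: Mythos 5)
Your proof is correct and follows essentially the same route as the paper: all three parts are argued by adding points of $\operatorname*{epi}\varphi$ and reading off $\varphi(x)=-\infty$ from $\{x\}\times\mathbb{R}\subseteq\operatorname*{epi}\varphi$, exactly as in the paper's proof (the only cosmetic difference being that in (c) you sum $n$ points $(x,t_i)$ where the paper sums $n$ copies of $(x,t)$ to get $(nx,nt)$). Your framing remarks about why the epigraph formulation avoids the undefined expression $+\infty+(-\infty)$ accurately reflect the paper's motivation for defining subadditivity this way.
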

\begin{proof}
\begin{itemize}
\item[]
\item[(a)] Assume $\varphi(0)=-\infty$ and $\lambda:=\varphi (x)\in\mathbb{R}$ for some $x\in\operatorname*{dom}\varphi$.
$\Rightarrow (0,t)\in\operatorname*{epi}\varphi$ for all $t\in\mathbb{R}$, $(x,\lambda)\in\operatorname*{epi}\varphi$.
$\Rightarrow (0,t)+(x,\lambda)=(x,t+\lambda)\in\operatorname*{epi}\varphi$ for all $t\in\mathbb{R}$. $\Rightarrow \varphi (x)=-\infty$, a contradiction.
\item[(b)] Assume that $\varphi(-x)\not=+\infty$. $\Rightarrow \exists\lambda\in\mathbb{R}: (-x,\lambda)\in\operatorname*{epi}\varphi$.
Since $(x,t)\in\operatorname*{epi}\varphi$ for all $t\in\mathbb{R}$ and $\varphi$ is subadditive, we get
$(x-x,t+\lambda)\in\operatorname*{epi}\varphi$ for all $t\in\mathbb{R}$. Thus $\varphi(0)=-\infty$, a contradiction.
\item[(c)] $\varphi(0)\in\mathbb{R} \Rightarrow (0,\varphi(0))\in\operatorname*{epi}\varphi$. $\Rightarrow (0,\varphi(0))+(0,\varphi(0))=(0,2\varphi(0))\in\operatorname*{epi}\varphi$.
$\Rightarrow \varphi(0)\le 2\varphi(0)$. $\Rightarrow \varphi(0)\ge 0$.\\
If $\varphi(x)=-\infty$, then $(x,t)\in\operatorname*{epi}\varphi$ for all $t\in\mathbb{R}$.
$\Rightarrow (nx,nt)\in\operatorname*{epi}\varphi$ for all $t\in\mathbb{R},n\in\mathbb{N}\setminus\{ 0\}$.
$\Rightarrow \varphi(nx)=-\infty$ for all $n\in\mathbb{N}\setminus\{ 0\}$.
\end{itemize}
\end{proof}

\begin{proposition}\label{p-add-fktal}
Let  $\varphi: X\rightarrow \overline{\mathbb{R}}_{\nu }$ be an additive function on a linear space $X$ with $0\in\operatorname*{dom}\varphi$. 
\begin{itemize}
\item[(a)] If $\varphi(0)\not=0$, then $\varphi$ does not attain any real value.
\item[(b)] If $\varphi(0)=0$, then $\varphi$ is odd on $\operatorname*{dom}\varphi\cap(-\operatorname*{dom}\varphi)$ and \\
$\varphi (tx)=t\varphi (x)$ for all $t\in\mathbb{Q}$, $x\in \operatorname*{dom}\varphi\cap(-\operatorname*{dom}\varphi)$.
\end{itemize}
\end{proposition}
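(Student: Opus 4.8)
The plan is to reduce everything to the subadditive case, exploiting the remark following Definition~\ref{def-sub-fktal} that $\varphi$ is superadditive if and only if $-\varphi$ is subadditive. Hence, when $\varphi$ is additive, \emph{both} $\varphi$ and $-\varphi$ are subadditive, so Lemma~\ref{l-subadd} is available for each of them, and statements about $+\infty$ translate into statements about $-\infty$ for $-\varphi$. Throughout I would work with the epigraph and hypograph directly, since additivity is encoded as $\operatorname*{epi}\varphi+\operatorname*{epi}\varphi\subseteq\operatorname*{epi}\varphi$ and $\operatorname*{hypo}\varphi+\operatorname*{hypo}\varphi\subseteq\operatorname*{hypo}\varphi$. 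Since both parts are assertions about the value $\varphi(0)\in\overline{\mathbb{R}}$, I take $0\in\operatorname*{dom}\varphi$.

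For part (a) I argue by cases on $\varphi(0)$. If $\varphi(0)\in\mathbb{R}$, then Lemma~\ref{l-subadd}(c) applied to $\varphi$ gives $\varphi(0)\ge 0$, while the same lemma applied to the subadditive function $-\varphi$ gives $-\varphi(0)\ge 0$, i.e. $\varphi(0)\le 0$; hence $\varphi(0)=0$, so the hypothesis $\varphi(0)\ne 0$ forces $\varphi(0)\in\{-\infty,+\infty\}$. If $\varphi(0)=-\infty$, Lemma~\ref{l-subadd}(a) shows immediately that $\varphi$ attains no real value. If $\varphi(0)=+\infty$, then $(-\varphi)(0)=-\infty$, and Lemma~\ref{l-subadd}(a) applied to $-\varphi$ shows that $-\varphi$, and therefore $\varphi$, attains no real value. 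This settles (a).

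For the oddness in part (b), fix $x\in D:=\operatorname*{dom}\varphi\cap(-\operatorname*{dom}\varphi)$, and note $D=-D$. If $\varphi(x)=-\infty$, then $\{x,-x\}\subset\operatorname*{dom}\varphi$ and $\varphi(0)=0\ne-\infty$, so Lemma~\ref{l-subadd}(b) gives $\varphi(-x)=+\infty=-\varphi(x)$; the case $\varphi(x)=+\infty$ follows by applying this to $-\varphi$. If $\varphi(x)\in\mathbb{R}$, then $\varphi(-x)\in\mathbb{R}$ as well (otherwise the two infinite cases applied to $-x$ would force $\varphi(x)=\pm\infty$), so $(x,\varphi(x)),(-x,\varphi(-x))\in\operatorname*{epi}\varphi$ yields $(0,\varphi(x)+\varphi(-x))\in\operatorname*{epi}\varphi$ and thus $0=\varphi(0)\le\varphi(x)+\varphi(-x)$; the symmetric computation in $\operatorname*{hypo}\varphi$ gives $0\ge\varphi(x)+\varphi(-x)$, whence $\varphi(-x)=-\varphi(x)$.

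For the $\mathbb{Q}$-homogeneity I would first establish $\varphi(nx)=n\varphi(x)$ for $n\in\mathbb{Z}$ and $x\in D$. For $n\ge 1$ and $\varphi(x)\in\mathbb{R}$, iterating $\operatorname*{epi}\varphi+\operatorname*{epi}\varphi\subseteq\operatorname*{epi}\varphi$ on $(x,\varphi(x))$ gives $(nx,n\varphi(x))\in\operatorname*{epi}\varphi$, hence $\varphi(nx)\le n\varphi(x)$, and the hypograph version gives the reverse inequality; for $\varphi(x)=-\infty$ the value propagates by Lemma~\ref{l-subadd}(c), and for $\varphi(x)=+\infty$ by its $-\varphi$ version. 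The cases $n=0$ and $n<0$ then follow from $\varphi(0)=0$ and the oddness just proved. Finally, for $t=p/q$ with $q\in\mathbb{N}\setminus\{0\}$ and $p\in\mathbb{Z}$, and $y:=tx$, the identity $qy=px$ combined with integer homogeneity yields $q\,\varphi(y)=\varphi(qy)=\varphi(px)=p\,\varphi(x)$, from which $\varphi(y)=t\varphi(x)$ is read off. I expect the main obstacle to lie exactly in this last step: one cannot simply divide by $q$ in $\overline{\mathbb{R}}$, so I would split into $\varphi(x)\in\mathbb{R}$, $\varphi(x)=+\infty$ and $\varphi(x)=-\infty$ (tracking the sign of $t$ via $p$), and I must ensure that $y=tx$ lies in $\operatorname*{dom}\varphi$ — indeed in $D$, by symmetry through $-x$ — so that $\varphi(y)$ is a genuine extended-real value to which the integer-homogeneity relation applies.
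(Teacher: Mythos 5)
Your overall route is exactly the paper's: part (a) via Lemma~\ref{l-subadd}(a) and (c) applied to both $\varphi$ and $-\varphi$ (the paper likewise uses that additivity makes $-\varphi$ subadditive), oddness via Lemma~\ref{l-subadd}(b) together with the $\operatorname*{epi}\varphi\cap\operatorname*{hypo}\varphi$ computation at $(x,\varphi(x))+(-x,\varphi(-x))$, integer homogeneity by iterating $\operatorname*{epi}\varphi+\operatorname*{epi}\varphi\subseteq\operatorname*{epi}\varphi$ and its hypograph counterpart (with Lemma~\ref{l-subadd}(c) handling the infinite values), and the passage to $\mathbb{Q}$ by writing $\varphi(x)=\varphi\bigl(q\cdot\tfrac{1}{q}x\bigr)=q\,\varphi\bigl(\tfrac{1}{q}x\bigr)$. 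Everything through integer homogeneity is sound; your explicit restriction to $0\in\operatorname*{dom}\varphi$ in (a) corresponds to an assumption the paper makes tacitly, since Lemma~\ref{l-subadd} says nothing when $\varphi(0)=\nu$.

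The genuine gap is the final step, and your proposed patch does not close it: ``$tx\in\operatorname*{dom}\varphi$ by symmetry through $-x$'' is not an argument. $D=-D$ gives closure under negation, and additivity gives closure of $\operatorname*{dom}\varphi$ under addition (hence under multiplication by positive integers), but nothing forces $\tfrac{1}{q}x$ into $\operatorname*{dom}\varphi$. Concretely, take $X=\mathbb{R}$ and $\varphi(x)=x$ for $x\in\mathbb{Z}$, $\varphi(x)=\nu$ otherwise: this $\varphi$ is additive with $\varphi(0)=0$ and $D=\mathbb{Z}$, yet $\varphi\bigl(\tfrac{1}{2}\cdot 1\bigr)=\nu\neq\tfrac{1}{2}=\tfrac{1}{2}\varphi(1)$. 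So the rational-homogeneity assertion cannot be proved for general additive $\varphi$; it holds only for those $t=\tfrac{p}{q}$ with $\tfrac{p}{q}x\in\operatorname*{dom}\varphi$, or under an added hypothesis that $\operatorname*{dom}\varphi$ is closed under rational scaling. To be fair, the paper's own proof commits the identical unjustified step at ``$\varphi(x)=\varphi(q\tfrac{1}{q}x)=q\varphi(\tfrac{1}{q}x)$'', so you have correctly located the delicate point — but the symmetry remark you offer in its place is false, and you should either add the missing hypothesis or restrict the conclusion.
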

\begin{proof}
From Lemma \ref{l-subadd} we get:\\
If $\varphi(0)\notin\mathbb{R}$, then $\varphi$ does not attain any real value.\\
If $\varphi(0)\in\mathbb{R}$, then $\varphi(0)=0$.\\
This implies (a).\\
Assume now $\varphi(0)=0$. Lemma \ref{l-subadd} implies for $x\in\operatorname*{dom}\varphi\cap(-\operatorname*{dom}\varphi)$:
$\varphi(x)=-\infty$ if and only if $\varphi(-x)=+\infty$.
Consider now some arbitrary $x\in\operatorname*{dom}\varphi\cap(-\operatorname*{dom}\varphi)$ with $\varphi(x)\in\mathbb{R}$.
$\Rightarrow (x,\varphi(x))$, $(-x,\varphi(-x))\in\operatorname*{epi}\varphi\cap\operatorname*{hypo}\varphi$.
$\Rightarrow (x,\varphi(x))+(-x,\varphi(-x))=(0,\varphi(x)+\varphi(-x))\in\operatorname*{epi}\varphi\cap\operatorname*{hypo}\varphi$.
$\Rightarrow 0=\varphi(0)=\varphi(x)+\varphi(-x)$. $\Rightarrow \varphi(-x)=-\varphi(x)$. Thus $\varphi$ is odd on $\operatorname*{dom}\varphi\cap(-\operatorname*{dom}\varphi)$.\\
Lemma \ref{l-subadd} implies $\varphi (nx)=n\varphi (x)$ for all $n\in\mathbb{N}$ and all $x\in \operatorname*{dom}\varphi$ with $\varphi(x)\notin\mathbb{R}$.
Consider now some $x\in \operatorname*{dom}\varphi$ with $\varphi(x)\in\mathbb{R}$.
$\Rightarrow (x,\varphi(x))\in \operatorname*{epi}\varphi\cap\operatorname*{hypo}\varphi$.
$\Rightarrow (nx,n\varphi(x))\in \operatorname*{epi}\varphi\cap\operatorname*{hypo}\varphi$.
$\Rightarrow \varphi (nx)=n\varphi(x)$ for all $n\in\mathbb{N}$. Thus $\varphi (nx)=n\varphi(x)$ for all $n\in\mathbb{N}$, $x\in\operatorname*{dom}\varphi$.
For $t\in -\mathbb{N}$, $x\in \operatorname*{dom}\varphi\cap(-\operatorname*{dom}\varphi)$ we get
$\varphi (tx)=\varphi ((-t)(-x))=-t\varphi(-x)=-t\cdot (-\varphi (x))=t\varphi (x)$.
Consider $q\in\mathbb{Z}\setminus\{0\}$, $x\in \operatorname*{dom}\varphi\cap(-\operatorname*{dom}\varphi)$. 
Then $\varphi (x)=\varphi (q\frac{1}{q}x)=q\varphi (\frac{1}{q}x)$.
$ \Rightarrow \varphi (\frac{1}{q}x)=\frac{1}{q}\varphi (x)$.
Consequently, $\varphi (\frac{p}{q}x)=\frac{p}{q}\varphi (x)$ for all $x\in \operatorname*{dom}\varphi, p\in \mathbb{N}, q\in\mathbb{Z}\setminus\{0\}$.
\end{proof}

\begin{theorem}\label{t-lin}
Let $X$ be a linear space and $\varphi :X \rightarrow \overline{\mathbb{R}}$. 
\begin{itemize}
\item[(1)] $\varphi$ is linear if and only if $\varphi$ is homogeneous and additive.
\item[(2)] $\varphi$ is linear if and only if $\varphi$ and $-\varphi$ are sublinear.
\end{itemize}
\end{theorem}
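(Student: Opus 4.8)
Since $\varphi$ maps into $\overline{\mathbb{R}}$ rather than $\overline{\mathbb{R}}_{\nu}$, we have $\operatorname*{dom}\varphi=X$, so by Definition~\ref{def-lin-fktal} linearity of $\varphi$ amounts to: $\varphi$ is convex and concave and $\varphi(0)=0$; note also that $X$ is a nonempty convex cone with $X=-X$. The plan is to reduce both equivalences to this description together with the structural lemmas of this section. The implication ``$\Leftarrow$'' in~(1) is immediate: if $\varphi$ is homogeneous and additive, then $\varphi$ is positively homogeneous and subadditive, hence sublinear and in particular convex by Lemma~\ref{l-sublin}; applying Lemma~\ref{l-homo-anders}(a) and ``$\varphi$ superadditive $\iff$ $-\varphi$ subadditive'' to $-\varphi$ shows $-\varphi$ is sublinear and convex, so $\varphi$ is concave by Lemma~\ref{l-cx-cv}. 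Homogeneity forces $\varphi(0)=0\cdot\varphi(x)=0$, so $\varphi$ is linear.

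The content of~(1) is the converse. The obstacle is that a linear $\varphi$ may be improper (Example~\ref{bsp-lin-improp}), so the pointwise inequality of Theorem~\ref{tconv} is unavailable and one must work through $\operatorname*{epi}\varphi$ and $\operatorname*{hypo}\varphi$ directly. First I would prove the scaling identity $\varphi(\lambda x)=\lambda\varphi(x)$ for all $\lambda\ge 0$, $x\in X$. Since $\varphi(0)=0$ gives $(0,0)\in\operatorname*{epi}\varphi\cap\operatorname*{hypo}\varphi$, for $\lambda\in[0,1]$ convexity of $\operatorname*{epi}\varphi$ and of $\operatorname*{hypo}\varphi$ applied to the combinations $\lambda(x,\varphi(x))+(1-\lambda)(0,0)$ yield $\varphi(\lambda x)\le\lambda\varphi(x)$ and $\varphi(\lambda x)\ge\lambda\varphi(x)$ when $\varphi(x)\in\mathbb{R}$, and the cases $\varphi(x)=\pm\infty$ follow the same way using Lemma~\ref{l-epihypo}(b),(c). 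For $\lambda>1$ I would apply this to the point $\lambda x$ with factor $1/\lambda\in(0,1)$ to get $\varphi(x)=\tfrac1\lambda\varphi(\lambda x)$, and then read off $\varphi(\lambda x)=\lambda\varphi(x)$ in each of the three cases $\varphi(x)\in\mathbb{R},+\infty,-\infty$. By Proposition~\ref{p-hom}(a), $\varphi$ is then positively homogeneous.

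Next I would obtain oddness from the midpoint relation $0=\tfrac12 x+\tfrac12(-x)$: when $\varphi(x),\varphi(-x)$ are finite, convexity of $\operatorname*{epi}\varphi$ gives $\varphi(x)+\varphi(-x)\ge 2\varphi(0)=0$ and convexity of $\operatorname*{hypo}\varphi$ gives $\varphi(x)+\varphi(-x)\le 0$, whence $\varphi(-x)=-\varphi(x)$; the cases $\varphi(x)=\pm\infty$ are settled by the same midpoint combinations together with Lemma~\ref{l-epihypo}, which would otherwise force $\varphi(0)=\pm\infty$. As $\operatorname*{dom}\varphi=-\operatorname*{dom}\varphi$, $\varphi$ is odd, hence homogeneous by Definition~\ref{def-sub-fktal}(g). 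Additivity then follows from sublinearity: $\varphi$ is convex and positively homogeneous, hence subadditive by Lemma~\ref{l-sublin}, while $-\varphi$ is convex (Lemma~\ref{l-cx-cv}) and positively homogeneous (Lemma~\ref{l-homo-anders}(a)), hence subadditive, so $\varphi$ is superadditive; thus $\varphi$ is additive, completing~(1).

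For~(2), if $\varphi$ is linear then by~(1) it is homogeneous and additive, so both $\varphi$ and $-\varphi$ are positively homogeneous and subadditive and hence sublinear by Lemma~\ref{l-sublin}. Conversely, if $\varphi$ and $-\varphi$ are sublinear, then $\varphi$ is convex and, since $-\varphi$ is convex, $\varphi$ is concave by Lemma~\ref{l-cx-cv}; the remark following Definition~\ref{def-sub-fktal} gives $\varphi(0)\in\{0,-\infty\}$ from sublinearity of $\varphi$ and $\varphi(0)=-\,(-\varphi)(0)\in\{0,+\infty\}$ from sublinearity of $-\varphi$, so $\varphi(0)=0$, and $\operatorname*{dom}\varphi=X$ makes $\varphi$ linear. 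The single real difficulty is the improperness-robust scaling identity in the converse of~(1); once positive homogeneity and oddness are in hand, the remaining assertions are bookkeeping with Lemmas~\ref{l-sublin}, \ref{l-cx-cv}, \ref{l-homo-anders} and Proposition~\ref{p-hom}.
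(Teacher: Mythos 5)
Your proposal is correct and follows essentially the same route as the paper: positive homogeneity via convex combinations of $(x,\varphi(x))$ with $(0,0)$ in $\operatorname*{epi}\varphi\cap\operatorname*{hypo}\varphi$ (with the reciprocal trick for $\lambda>1$), oddness via the midpoint relation at $0$, and then additivity and part (2) by feeding the result into Lemma \ref{l-sublin}. The only cosmetic difference is that you re-derive the $\pm\infty$ cases directly from Lemma \ref{l-epihypo} where the paper cites Proposition \ref{p-conv-infty}(b) and (d), and you establish positive homogeneity before oddness rather than after.
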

\begin{proof}
\begin{itemize}
\item[]
\item[(a)] We suppose that $\varphi$ is linear.\\ 
Assume that $\varphi$ is not odd. $\Rightarrow\exists x\in X: \varphi(-x)\not= -\varphi(x)$.
Proposition \ref{cx_odd_infty}(d) implies $\varphi(x)\not= -\infty$ and $\varphi(-x)\not= -\infty$. Applying Proposition \ref{cx_odd_infty}(d) to the convex function $-\varphi$, we get $\varphi(x)\not= +\infty$
and $\varphi(-x)\not= +\infty$. Thus $t_1:=\varphi(x)\in\mathbb{R}$, $t_2:=\varphi(-x)\in\mathbb{R}$. $\Rightarrow (x,t_1),(-x,t_2)\in\operatorname*{epi}\varphi\cap\operatorname*{hypo}\varphi$. $\Rightarrow 
(\frac{1}{2}(x-x),\frac{1}{2}(t_1+t_2))\in\operatorname*{epi}\varphi\cap\operatorname*{hypo}\varphi$. $\Rightarrow 0=\varphi(0)=\frac{1}{2}(t_1+t_2)$.
$\Rightarrow t_1=-t_2$, a contradiction. Hence $\varphi$ is odd.\\
If $\varphi(x)=-\infty$, then Proposition \ref{p-conv-infty}(b) implies $\varphi(\lambda x)=-\infty$ for all $\lambda\in (0,1)$.
If $\varphi(x)=+\infty$ and we apply Proposition \ref{p-conv-infty}(b) to $-\varphi$, we get $\varphi(\lambda x)=+\infty$ for all $\lambda\in (0,1)$.
Consider now some $x\in X$ with $t:=\varphi(x)\in\mathbb{R}$. $\Rightarrow (0,0), (x,t)\in\operatorname*{epi}\varphi\cap\operatorname*{hypo}\varphi$.
$\Rightarrow ((1-\lambda)\cdot 0+\lambda x,(1-\lambda)\cdot 0+\lambda t)\in\operatorname*{epi}\varphi\cap\operatorname*{hypo}\varphi$ for all $\lambda\in [0,1]$.
$\Rightarrow (\lambda x,\lambda t)\in\operatorname*{epi}\varphi\cap\operatorname*{hypo}\varphi$ for all $\lambda\in [0,1]$.
$\Rightarrow \varphi (\lambda x)=\lambda t$ for all $\lambda\in [0,1]$.
Thus $\varphi (\lambda x)=\lambda \varphi(x)$ for all $x\in X$, $\lambda\in [0,1]$.\\
Consider some $x\in X$ and $\lambda >1$. $\Rightarrow \frac{1}{\lambda}\in (0,1)$.
$\Rightarrow \varphi (x)=\varphi (\frac{1}{\lambda}(\lambda x))=\frac{1}{\lambda}\varphi (\lambda x)$.
$\Rightarrow \varphi (\lambda x)=\lambda \varphi (x)$.
Hence $\varphi (\lambda x)=\lambda \varphi (x)$ for all $\lambda\in \mathbb{R}_{+}$.
Since $\varphi$ is odd, we get $\varphi (\lambda x)=\lambda \varphi (x)$ for all $\lambda\in \mathbb{R}$.
Thus $\varphi$ is homogeneous by Proposition \ref{p-hom}.
\item[(b)] (a) implies part (2) of our assertion because of Lemma \ref{l-sublin}.
\item[(c)] Applying Lemma \ref{l-sublin} to $\varphi$ and $-\varphi$ implies the additivity of each linear function $\varphi$.
\item[(d)] If $\varphi$ is homogeneous and additive, then  $\varphi$ is linear by (2) and Lemma \ref{l-sublin}.
\end{itemize}
\end{proof}

Thus for real-valued functions, our definition of linearity coincides with the usual one.
\begin{corollary}
Let $X$ be a linear space and $\varphi: X \to \mathbb{R}$. \\
Then $\varphi$ is linear if and only if  
\[ 
\varphi(\lambda_1 x^1+\lambda_2 x^2) = \lambda_1 \varphi(x^1) + \lambda_2 \varphi(x^2)
\]
holds for all $x^1, x^2\in X$ and $\lambda_1 ,\lambda_2\in\mathbb{R}$.
\end{corollary}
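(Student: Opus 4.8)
The plan is to exploit the fact that for a real-valued $\varphi$ we have $\operatorname*{dom}\varphi = X$ and $\varphi$ is proper, so that the characterizations of homogeneity, additivity and convexity established above apply directly. I would treat the two implications separately.

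For the forward implication, suppose $\varphi$ is linear. First I would invoke Theorem \ref{t-lin}(1) to conclude that $\varphi$ is homogeneous and additive. Since $\operatorname*{dom}\varphi = X$ is a nonempty cone with $\operatorname*{dom}\varphi = -\operatorname*{dom}\varphi$, Proposition \ref{p-hom}(c) then yields $\varphi(\lambda x) = \lambda\varphi(x)$ for all $\lambda \in \mathbb{R}$ and $x \in X$. Because $\varphi$ is proper, additivity together with Proposition \ref{p-hom}(b) (applied to both the subadditivity and the superadditivity half) gives the exact identity $\varphi(x^1 + x^2) = \varphi(x^1) + \varphi(x^2)$ for all $x^1, x^2 \in X$. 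Combining these two facts,
\[ \varphi(\lambda_1 x^1 + \lambda_2 x^2) = \varphi(\lambda_1 x^1) + \varphi(\lambda_2 x^2) = \lambda_1\varphi(x^1) + \lambda_2\varphi(x^2), \]
which is the desired identity.

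For the reverse implication, assume the identity holds for all $x^1, x^2 \in X$ and $\lambda_1, \lambda_2 \in \mathbb{R}$. Setting $\lambda_1 = \lambda_2 = 0$ immediately gives $\varphi(0) = 0$, and $\operatorname*{dom}\varphi = X$ is convex because $\varphi$ is real-valued. Restricting to $\lambda_1 = \lambda \in (0,1)$ and $\lambda_2 = 1 - \lambda$, the identity becomes $\varphi(\lambda x^1 + (1-\lambda)x^2) = \lambda\varphi(x^1) + (1-\lambda)\varphi(x^2)$; reading this equality once as inequality (\ref{eq-cx}) and once as its reverse, Theorem \ref{tconv} shows that $\varphi$ is both convex and concave. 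With $\operatorname*{dom}\varphi = X$ and $\varphi(0) = 0$, Definition \ref{def-lin-fktal} then declares $\varphi$ linear, completing the equivalence.

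I do not anticipate a genuine obstacle, since the statement is a specialization of results already in hand; the only points requiring care are that the free coefficients $\lambda_1, \lambda_2$ (rather than mere convex combinations) force the use of full homogeneity from Theorem \ref{t-lin} in the forward direction instead of Theorem \ref{tconv} alone, and that in the reverse direction the single equality must be split into the two inequalities of Theorem \ref{tconv} to recover convexity and concavity simultaneously.
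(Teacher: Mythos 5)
Your proposal is correct and follows essentially the route the paper intends: the corollary is stated as an immediate consequence of Theorem \ref{t-lin}, and you derive the forward direction exactly that way (linearity $\Rightarrow$ homogeneity and additivity $\Rightarrow$ the identity via Proposition \ref{p-hom}, which applies since a real-valued $\varphi$ is proper with $\operatorname*{dom}\varphi=X$). Your reverse direction goes through Theorem \ref{tconv} and Definition \ref{def-lin-fktal} rather than extracting homogeneity and additivity from the identity and reapplying Theorem \ref{t-lin}(1), but this is only a minor stylistic variation and is equally valid.
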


\begin{theorem}\label{t-lin-add}
Let $X$ be a topological vector space and $\varphi :X \rightarrow \overline{\mathbb{R}}$ be continuous. 
Then $\varphi$ is linear if and only if $\varphi$ is additive and $\varphi (0)=0$.
\end{theorem}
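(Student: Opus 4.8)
The plan is to dispose of the forward implication at once and to spend the real work on the converse. Since $\varphi$ takes values in $\overline{\mathbb{R}}$ (never $\nu$), we have $\operatorname*{dom}\varphi=X$. If $\varphi$ is linear, then by Definition~\ref{def-lin-fktal} $\varphi(0)=0$, and Theorem~\ref{t-lin}(1) already tells us that a linear functional is (homogeneous and) additive; hence the implication ``linear $\Rightarrow$ additive and $\varphi(0)=0$'' needs no continuity. Everything below concerns the converse.

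Assume now that $\varphi$ is continuous, additive, and $\varphi(0)=0$. By Theorem~\ref{t-lin}(1) it suffices to show that $\varphi$ is homogeneous. First I would extract rational homogeneity: since $\operatorname*{dom}\varphi\cap(-\operatorname*{dom}\varphi)=X$, Proposition~\ref{p-add-fktal}(b) yields that $\varphi$ is odd on $X$ and that $\varphi(tx)=t\varphi(x)$ for all $t\in\mathbb{Q}$ and all $x\in X$.

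Next I would upgrade this to real homogeneity, and here continuity enters. Fix $x\in X$ and consider $h\colon\mathbb{R}\to\overline{\mathbb{R}}$, $h(\lambda)=\varphi(\lambda x)$, which is continuous as the composition of the continuous scalar multiplication $\lambda\mapsto\lambda x$ with $\varphi$. The subtle point, and the main obstacle, is that the naive density argument ``$h$ agrees with $\lambda\mapsto\lambda\varphi(x)$ on the dense set $\mathbb{Q}$, hence everywhere'' fails verbatim when $\varphi(x)=\pm\infty$, because $\lambda\mapsto\lambda\cdot(\pm\infty)$ is discontinuous at $0$. The correct reading of this difficulty is that continuity forbids infinite values: if $\varphi(x_0)=+\infty$ for some $x_0$, then $h(t)=t\cdot(+\infty)=+\infty$ for every positive rational $t$, while $h(0)=\varphi(0)=0$; letting positive rationals tend to $0$ contradicts continuity of $h$ at $0$ in the Hausdorff space $\overline{\mathbb{R}}$. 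Thus $+\infty$ is not attained, and by oddness neither is $-\infty$, so $\varphi$ is finite-valued.

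Once $\varphi$ is known to be real-valued, the density argument goes through cleanly: for each fixed $x$ the two continuous real functions $\lambda\mapsto\varphi(\lambda x)$ and $\lambda\mapsto\lambda\varphi(x)$ agree on $\mathbb{Q}$ and therefore on all of $\mathbb{R}$, giving $\varphi(\lambda x)=\lambda\varphi(x)$ for every $\lambda\in\mathbb{R}$ and $x\in X$. Since $\operatorname*{dom}\varphi=X$ is a nonempty cone with $X=-X$, Proposition~\ref{p-hom}(c) then certifies that $\varphi$ is homogeneous; being also additive, $\varphi$ is linear by Theorem~\ref{t-lin}(1). As a byproduct this argument reproves Corollary~\ref{cont-lin-prop}.
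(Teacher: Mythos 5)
Your proof is correct. It follows the same basic route as the paper's: reduce to homogeneity via Theorem~\ref{t-lin}, extract oddness and rational homogeneity from Proposition~\ref{p-add-fktal}(b), and then upgrade to real homogeneity using continuity and the density of $\mathbb{Q}$ in $\mathbb{R}$. The difference is one of organization. The paper argues by contradiction entirely inside $\overline{\mathbb{R}}$: it separates $\varphi(\lambda x^0)$ and $\lambda\varphi(x^0)$ by disjoint neighborhoods, uses continuity of $x\mapsto\varphi(\lambda x)$ on $X$, and approaches $x^0$ along the segment $\{tx^0\mid t\in(p,1]\}$ with rational parameters $q_n/\lambda$; it never isolates the case $\varphi(x^0)\in\{-\infty,+\infty\}$, where the implicit step ``$q_n\varphi(x^0)\to\lambda\varphi(x^0)$'' still holds but deserves a word. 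You instead first show that continuity at $0$ along a ray forces $\varphi$ to be finite-valued --- your observation that $\lambda\mapsto\lambda\cdot(+\infty)$ is discontinuous at $0$ is exactly the right point, and it recovers Corollary~\ref{cont-lin-prop} under the weaker hypotheses of additivity and $\varphi(0)=0$ --- after which the density argument is the standard one for real-valued continuous functions on $\mathbb{R}$. Your version is slightly longer but more transparent about where the extended arithmetic could bite; both are valid.
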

\begin{proof}
Assume that $\varphi$ is additive and $\varphi (0)=0$, but that $\varphi$ is not linear.
Then $\varphi$ is not homogeneous by Theorem \ref{t-lin}.
$\Rightarrow \exists x^0\in X \;\exists \lambda\in\mathbb{R}: \varphi(\lambda x^0)\not=\lambda\varphi( x^0)$.
$\Rightarrow$ There exist neighborhoods $V_1$ of $\varphi(\lambda x^0)$ and $V_2$ of $\lambda\varphi( x^0)$ such that $V_1\cap V_2=\emptyset$.
Since $\varphi$ is continuous, the function $\varphi(\lambda x)$ is a continuous function of $x$.
Thus there exists some neighborhood $U$ of $x^0$ such that for each $x\in U$: $\varphi(\lambda x)\in V_1$.
There exists some $p\in(0,1)$ such that for all $t\in(p,1]$: $tx^0\in U$.
Consider some sequence $(q_n)_{n\in\mathbb{N}}$ of rational numbers with $q_n<\lambda$ for each $n\in\mathbb{N}$ which converges to $\lambda$.
$\Rightarrow \exists n_0\in\mathbb{N}\;\forall n>n_0: \frac{q_n}{\lambda }\in(p,1]$.
$\Rightarrow \;\forall n>n_0: \frac{q_n}{\lambda }x^0\in U$.
$\Rightarrow \;\forall n>n_0: q_n\varphi(x^0)=\varphi(q_n x^0)=\varphi(\lambda \frac{q_n}{\lambda } x^0)\in V_1$.
$\Rightarrow \;\forall n>n_0: q_n\varphi(x^0)\not\in V_2$.
$\Rightarrow q_n\varphi(x^0)$ does not converge to $\lambda\varphi(x^0)$ for $n\rightarrow \infty  $, a contradiction.
\end{proof}
\smallskip

\section{Usage of Results from Convex Analysis}\label{s-trans-cx-ua}

Many essential facts for extended real-valued functions have been proved in convex analysis and can be applied to the extended real-valued functions in our approach by making use of the following remarks.

There is a one-to-one correspondence between extended real-valued functions in convex analysis and those extended real-valued functions in the unified approach which do not attain the value $+\infty$. This will be shown in Section \ref{s-corr-cx-ua}. In Section \ref{s-beyond-cx}, the way of transferring results from convex analysis  to functions in the unified approach which can also attain the value $+\infty$ is pointed out. Since we have not defined all properties which are of interest for functionals, we start in Section \ref{s-ext-prop-def} with an explanation of how the definition of notions and properties for functions works in the unified approach.

\subsection{Definition of notions and properties for functions in the unified approach}\label{s-ext-prop-def}

$ \quad $\\
By defining notions and properties in the unified approach, one simply has to take into consideration that the value $\nu$ stands for ''not being defined" or ''not being feasible". 

A function in the unified approach has a property if and only if it has it on its domain, where the domain has to fulfill those conditions which are essential for the defined property. The definition for the property on the domain can be taken from convex analysis. This is illustrated by the previous sections and by the following definitions where the second one is, e.g., of importance in vector optimization.

\begin{definition}
Let $X$ be a real normed space, $\varphi :X \rightarrow \overline{\mathbb{R}}_{\nu}$ and $X_0\subseteq\operatorname*{dom}\varphi$ with $X_0\not=\emptyset$. \\
$\varphi$ is called Lipschitz continuous on $X_0$ if $\varphi$ is finite-valued on $X_0$ and if there exists some $L\in\mathbb{R}_+$ such that
$|\varphi (x^1) -\varphi (x^2)| \leq L \| x^1-x^2\|$ for all $x^1,x^2\in X_0$.\\
$\varphi$ is said to be locally Lipschitz continuous on $X_0$ if for each $x\in X_0$ there exists some neighborhood $U$ of $x$ such that
$\varphi$ is Lipschitz continuous on $U\cap X_0$.
$\varphi$ is locally Lipschitz continuous or Lipschitz continuous if $\varphi$ is a proper functional which is locally Lipschitz continuous or Lipschitz continuous, respectively, on $\operatorname*{dom}\varphi$.
\end{definition}

\begin{definition}\label{def-Bmon}
Let $X$ be a linear space, $B\subseteq X$  and $\varphi: X \to \overline{\mathbb{R}}_{\nu }$, $X_0\subseteq \operatorname*{dom}\varphi$.\\
$\varphi$ is said to be
\begin{itemize}
\item[(a)]
$B$-monotone on $X_0$
if $x^1,x^2 \in X_0$ and $x^{2}-x^{1}\in B$ imply $\varphi
(x^{1})\le \varphi (x^{2})$,
\item[(b)] strictly $B$-monotone on $X_0$ 
if $x^1,x^2 \in X_0$ and $x^{2}-x^{1}\in B\setminus
\{0\}$ imply $\varphi (x^{1})<\varphi (x^{2})$.
\end{itemize}
$\varphi$ is said to be $B$-monotone or strictly $B$-monotone if it is $B$-monotone or strictly $B$-monotone, respectively, on $\operatorname*{dom}\varphi$.
\end{definition}

In variational analysis, basic definitions can be extended to the unified approach as follows.

\begin{definition}
Suppose that $X$ is a separated locally convex space with topological dual space $X^{\ast}$ and  $f:X\rightarrow\overline{\mathbb{R}}_{\nu }$.
The \textbf{(Fenchel) sub\-differential} $\partial\! f(x^0)$ of $f$ at $x^0 \in X$ is the set
$$\{x^{\ast}\in X^{\ast} \; \mid \; \forall x\in \operatorname*{dom}f: \; x^{\ast }(x-x^0 ) \leq f(x)-f(x^0)\ \}\mbox{ if } f(x^0)\in\mathbb{R},$$
and the empty set if $f(x^0)\notin\mathbb{R}$.\\
The \textbf{(Fenchel) conjugate} of $f$ is the function $f^{\ast}:X^{\ast}\rightarrow \overline{\mathbb{R}}_{\nu }$ defined by
\[ f^{\ast}(x^{\ast}):=\sup\left\{ x^{\ast} (x) -f(x) \; \mid \; x\in \operatorname*{dom}f\right\}.\]
The \textbf{support function} $\sigma_{A}:X^{\ast}\rightarrow \overline{\mathbb{R}}_{\nu }$ of a set $A\subseteq X$ is defined by
\[ \sigma_{A}(x^{\ast}):=\sup\left\{ x^{\ast} (a) \; \mid \; a\in A\right\}.\]
The set $\{x^{\ast}\in X^{\ast}\mid \sigma_{A}(x^{\ast})\in\mathbb{R}\}$ is called the \textbf{barrier cone} $\operatorname*{bar}A$ of $A$.\\
\end{definition}
\smallskip

\subsection{Usage of results from convex analysis for functions which do not attain the value $+\infty$ in the unified approach}\label{s-corr-cx-ua}

$ \quad $\\
If a functional attains only values from $\mathbb{R}\cup\{-\infty\}$ on the entire space, then the notions for and properties of this function are the same in convex analysis and in our unified approach. 

If a statement in convex analysis is given for some functional $f_{cxa}:X\to\overline{\mathbb{R}}$, we can transfer this statement to $f_{ua}:X\to\overline{\mathbb{R}}_{\nu }$ in the unified approach given by
\begin{equation*}
f_{ua}(x)=\left\{
\begin{array}{c@{\quad\mbox{ if }\quad} l}
f_{cxa}(x) & f_{cxa}(x)\not=+\infty,\\
\nu & f_{cxa}(x)=+\infty ,
\end{array}
\right.
\end{equation*}
using the following interdependencies.\\
Note that $f_{ua}$ does not attain the value $+\infty$. 

\begin{itemize}
\item[I.] {\bf Notions and properties which are similar for $f_{cxa}$ and $f_{ua}$}\smallskip
\begin{itemize}
\item[(a)] The (effective) domain of $f_{cxa}$ in the terminology of convex analysis is, in the unified approach, the set
\begin{eqnarray*}
\qquad\operatorname*{dom}f_{ua} & = & \{x\in X\mid f_{ua}(x)\in\mathbb{R}\cup\{-\infty\}\}\\
& = & \{x\in X\mid f_{cxa}(x)\in\mathbb{R}\cup\{-\infty\}\}= \operatorname*{dom}\nolimits_{-}f_{cxa}.
\end{eqnarray*}
\item[(b)] $f_{ua}$ is proper in the terminology of the unified approach\\
$\Leftrightarrow f_{cxa}$ is proper in the terminology of convex analysis\\
$\Leftrightarrow f_{ua}$ attains some real value, but not the value $-\infty$ on $X$\\
$\Leftrightarrow f_{cxa}$ attains some real value, but not the value $-\infty$ on $X$.
\item[(c)] $f_{ua}$ is finite-valued in the terminology of the unified approach\\
$\Leftrightarrow f_{cxa}$ is finite-valued in the terminology of convex analysis\\
$\Leftrightarrow f_{ua}$ attains only real values on $X$\\ 
$\Leftrightarrow f_{cxa}$ attains only real values on $X$.
\item[(d)] $\operatorname*{epi}f_{ua}=\operatorname*{epi}f_{cxa}$.\\
Our definition of the epigraph applied to $f_{cxa}$ coincides with that in convex analysis.
\item[(e)] For each $x\in\operatorname*{dom}_{-}f_{cxa}$, $f_{cxa}$ is lower semicontinuous at $x$ if and only if $f_{ua}$ is lower semicontinuous at $x$.\\
The notion of lower semicontinuity of $f_{cxa}$ at $x\in\operatorname*{dom}_{-}f_{cxa}$ coincides in convex analysis and in the unified approach.
\item[(f)] $f_{cxa}$ is convex if and only if $f_{ua}$ is convex.\\
Our definition of convexity applied to $f_{cxa}$ coincides with that in convex analysis.
\item[(g)]  For $f_{cxa}$, the definitions of $B$-monotonicity, strict $B$-monotonicity, Lipschitz continuity and local Lipschitz continuity on a nonempty subset of
$\operatorname*{dom}_{-}f_{cxa}$ in the unified approach are the same as in convex ana\-lysis. $f_{cxa}$ has one of these properties if and only if $f_{ua}$ has it.
\item[(h)] $\partial\! f_{ua}(x)=\partial\! f_{cxa}(x)$ for each $x\in X$ if $X$ is a separated locally convex space.\\
Our definition of the subdifferential applied to $f_{cxa}$ coincides with that in convex analysis.\\
\end{itemize}
\item[II.] {\bf Notions and properties which are different for $f_{cxa}$ and $f_{ua}$}\smallskip
\begin{itemize}
\item[(j)] For each nonempty set $X_0\subseteq \operatorname*{dom}f_{ua}$, the notions infimum, supremum, maximum, minimum, upper bound and lower bound  on $X_0$ have the same contents for $f_{cxa}$ and $f_{ua}$. In this case, the definitions of these notions coincide for $f_{cxa}$ with the usual definitions in convex analysis.\\
If $X_0=\emptyset$, we get in convex analysis,
$\operatorname*{inf}_{x\in X_0}f_{cxa}(x)= +\infty$ and 
$\operatorname*{sup}_{x\in X_0}f_{cxa}(x)= -\infty$,\\
and in the unified approach,
$\operatorname*{inf}_{x\in X_0}f_{ua}(x)=\operatorname*{sup}_{x\in X_0}f_{ua}(x)=\nu $.\\
If $X_0\setminus\operatorname*{dom}_{-}f_{cxa}\not=\emptyset$, we get
in convex analysis,\\
$\operatorname*{inf}_{x\in X_0}f_{cxa}(x)= \operatorname*{inf}_{x\in\operatorname*{dom}f_{cxa}}f_{cxa}(x)$ and 
$\operatorname*{sup}_{x\in X_0}f_{cxa}(x)= +\infty$,
and in the unified approach,
$\operatorname*{inf}_{x\in X_0}f_{ua}(x)=\operatorname*{inf}_{x\in\operatorname*{dom}f_{ua}}f_{ua}(x)$\\
and $\operatorname*{sup}_{x\in X_0}f_{ua}(x)= \operatorname*{sup}_{x\in\operatorname*{dom}f_{ua}}f_{ua}(x)$. 
\item[(k)] $f_{cxa}$ is continuous at $x\in\operatorname*{dom}_{-}f_{cxa}$ if and only if $x\in\operatorname*{int}\operatorname*{dom}f_{ua}$ and $f_{ua}$ is continuous at $x$.\\
The analogous statement for upper semicontinuity instead of continuity holds as well.\\
At each $x\in\operatorname*{dom}_{-}f_{cxa}\setminus \operatorname*{int}(\operatorname*{dom}_{-}f_{cxa})$, $f_{cxa}$ is not upper semicontinuous and hence not continuous according to the definition in convex analysis as well as to the definition in the unified approach, but $f_{ua}$ may be upper semicontinuous or continuous at $x$ in the unified approach.\\
At $x\in\operatorname*{int}(\operatorname*{dom}_{-}f_{cxa})$, the notions of continuity and upper semicontinuity of $f_{cxa}$ in convex analysis are the same as in the unified approach.
\end{itemize}
\smallskip
Let us point out that these differences are advantages of the unified approach.
\end{itemize}

\subsection{Usage of results from convex analysis for functions which attain the value $+\infty$ in the unified approach}\label{s-beyond-cx}

$ \quad $\\
Statements from convex analysis for functionals $f_{cxa}:X\to\overline{\mathbb{R}}$ can be transferred to statements
for functions $\varphi :X\to\overline{\mathbb{R}}_{\nu}$ in the unified approach which can also attain the value $+\infty$ according to the following rules.\\

\begin{tabular}{|c|c|}\hline
Replace & by\\ \hline\hline
$X$ & $\operatorname*{dom}\varphi $\\ \hline
$\operatorname*{dom}f_{cxa}$ & $\operatorname*{dom}_{-}\varphi $\\ \hline
finite-valued & proper\\ \hline
proper & $\varphi$ attains some real value and not the value $-\infty$\\ \hline
\end{tabular}\\ 
\smallskip

The sets on which $f_{cxa}$ has a certain property have to be adapted to $\varphi$ according to this table. If the set is $X$, those conditions which the whole space $X$ automatically fulfills have to be taken into consideration.
\smallskip

\section{Final Remarks}
Our theory serves as a basis for extending functions with values in $\overline{\mathbb{R}}$ to the entire space and handling them in a way which is not unilateral. It offers the same possibilities as the approach in convex analysis, but avoids several of its disadvantages and delivers a calculus for functions not depending on the purpose they are used for.

All results of the unified approach can immediately be applied in the classical framework of convex analysis since each function $\varphi : X\to\overline{\mathbb{R}}$ also maps into $\overline{\mathbb{R}}_{\nu }$. 

\smallskip
\begin{tabular}{|c|c|}\hline
Replace (unified approach) & by (classical approach) \\ \hline\hline
$\operatorname*{dom}\varphi$ & $X$\\ \hline
$\operatorname*{dom}_{-}\varphi$ & $\operatorname*{dom}\varphi$\\ \hline
proper & finite-valued\\ \hline
\end{tabular}\\ 
\smallskip

Let us finally mention that the application of $\nu$ is not restricted to $\overline{\mathbb{R}}$. $\nu$ can be considered in combination with each space $Y$. The concept presented in this paper can easily be adapted to $Y_{\nu }:=Y\cup\{\nu \}$ and to functions with values in $Y_{\nu }$.
Suppose some function $f:C\rightarrow Y$ with $C$ being a subset of some space $X$ and $Y$ being a linear space. 
$f$ could be extended to $X$ by the function value $\nu$. 
An indicator function $\iota_{C}:X\rightarrow Y_{\nu}$ can be defined in the same way as above, where $0$ stands for the element of $Y$ which is neutral w.r.t. addition.
\smallskip

\end{document}